\renewcommand\@biblabel[1]{}
\numberwithin{equation}{section}
\newcommand{\beq}{\begin{equation}}
\newcommand{\eeq}{\end{equation}}
\newcommand{\beqs}{\begin{eqnarray*}}
\newcommand{\eeqs}{\end{eqnarray*}}
\newcommand{\beqn}{\begin{eqnarray}}
\newcommand{\eeqn}{\end{eqnarray}}
\newcommand{\beqa}{\begin{array}}
\newcommand{\eeqa}{\end{array}}
\def\lra{\longrightarrow}
\def\bc{\begin{center}}
\def\ec{\end{center}}
\def\begeq{\begin{equation}}
\def\endeq{\end{equation}}
\def\and{\quad{\rm and}\quad}
\let\lra=\longrightarrow
\def\mapright\#1{\,\smash{\mathop{\lra}\limits^{\#1}}\,}
\newtheorem{prop}{Proposition}[section]
\newtheorem{theo}[prop]{Theorem}
\newtheorem{lem}[prop]{Lemma}
\newtheorem{claim}[prop]{Claim}
\newtheorem{cor}[prop]{Corollary}
\newtheorem{rem}[prop]{Remark}
\newtheorem{defi}[prop]{Definition}
\newtheorem{problem}[prop]{Problem}
\begin{document}

\date{}
\author {Yuxing Deng }
\author { Xiaohua $\text{Zhu}^*$}

\thanks {* Partially supported by the NSFC Grants 11271022 and 11331001}
\subjclass[2000]{Primary: 53C25; Secondary: 53C55,
58J05}
\keywords { Ricci flow, Ricci soliton, $\kappa$-solution, Harnack inequality}

\address{ Yuxing Deng\\School of Mathematical Sciences, Beijing Normal University,
Beijing, 100875, China\\
dengyuxing@mail.bnu.edu.cn}

\address{ Xiaohua Zhu\\School of Mathematical Sciences and BICMR, Peking University,
Beijing, 100871, China\\
xhzhu@math.pku.edu.cn}

\title{ Asymptotic behavior of positively curved steady Ricci Solitons}
\maketitle

\section*{\ }

\begin{abstract} In this paper, we analyze the asymptotic behavior of $\kappa$-noncollapsed and positively curved steady Ricci solitons and prove that any $n$-dimensional $\kappa$-noncollapsed steady K\"ahler-Ricci soliton with non-negative sectional curvature must be flat.
\end{abstract}

\section{Introduction}

The classification of positively curved steady soliton is an important problem in the study of Ricci flow. In his celebrated paper \cite{Pe1}, Perelman conjectured that \textit{all 3-dimensional $\kappa$-noncollapsed steady (gradient) Ricci solitons must be rotationally symmetric} (Precisely, Perelman claims that the conjecture is true without giving any sketch of proof, see \textbf{11.9} of that paper).
The conjecture was solved by Brendle in 2012 \cite{Br1}. Brendle also proved that the same result holds for higher dimensional $\kappa$-noncollapsed Ricci solitons
with nonnegative sectional curvature if they are asymptotically cylindrical \cite{Br2}. Under the condition of locally conformally flat condition, Cao and Chen also proved the rotational symmetry of gradient steady soltion  \cite{CaCh}. These rotationally symmetric metrics are usually called the Bryant steady Ricci solitons.

More than 20 years ago,  Cao  constructed a family of $U(n)$-invariant steady K\"ahler-Ricci solitons with positive sectional curvature on $\mathbb{C}^{n}$ \cite{Ca2}. He also proposed the following open problem:
\begin{problem}\label{question-1}
Is it true that any complete gradient steady K\"{a}hler-Ricci soliton with positive sectional curvature must be $U(n)$-invariant?
\end{problem}
Unlike the Bryant solitons, one can check that Cao's solitons are all collapsed (cf. Appendix ). Thus, it is interesting to ask the following question:

\begin{problem}\label{question-2}
Does there exist a steady K\"{a}hler-Ricci soliton with positive sectional curvature which is $\kappa$-noncollapsed?
\end{problem}
In this paper, we give a negative answer to Problem \ref{question-2}. Namely we prove

\begin{theo}\label{main-theorem-nonexistence-1}There is no
$\kappa$-noncollapsed steady gradient K\"{a}hler-Ricci soliton with positive sectional curvature.
\end{theo}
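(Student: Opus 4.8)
The plan is to argue by contradiction: suppose $(M^{2m},g,J,\nabla f)$ is a $\kappa$-noncollapsed steady gradient K\"ahler-Ricci soliton with positive sectional curvature, and to extract from its geometry at infinity a contradiction with the parallel complex structure. First I would record the structure forced by the hypotheses. Positive sectional curvature implies positive holomorphic bisectional curvature (each bisectional curvature being a sum of two sectional curvatures), so the eternal K\"ahler-Ricci flow generated by the soliton has bounded positive bisectional curvature, and Cao's Li-Yau-Hamilton (Harnack) inequality is available throughout. Positivity also forces $M$ to be diffeomorphic to $\mathbb{R}^{2m}$, gives a scalar curvature $R>0$ with $R+|\nabla f|^2$ constant, and makes the potential $f$ proper with a single critical point. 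We thus have an eternal, $\kappa$-noncollapsed solution of positively curved type to which Hamilton's dimension reduction and Perelman's theory of $\kappa$-solutions apply.

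The analytic heart of the argument, and the step I expect to be the main obstacle, is to identify the blow-down limit at infinity. After establishing---via the noncollapsing bound and positivity---that the asymptotic scalar curvature ratio is infinite (as it is for the Bryant soliton), I would use the Harnack inequality together with $\kappa$-noncollapsing to produce a non-degenerate pointed limit, and Hamilton's dimension-reduction procedure to show that rescaling $g$ by the curvature along a sequence $p_i\to\infty$ yields a smooth pointed limit that splits a line, $\mathbb{R}\times Y^{2m-1}$, where the cross-section $Y$ is a $(2m-1)$-dimensional $\kappa$-solution with nonnegative sectional curvature. The decisive point, which is exactly the asymptotically cylindrical behavior this paper is built to establish, is that $Y$ must be \emph{compact}: one has to rule out a noncompact cross-section, conical ends, and further collapsing, and it is in combining the curvature-decay estimates, the noncollapsing bound, and the classification of the emerging low-dimensional $\kappa$-solution that essentially all of the work lies.

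Granting that the soliton is asymptotically cylindrical with compact cross-section $Y$, the K\"ahler hypothesis closes the argument at once. Since $J$ is parallel, it survives the smooth pointed convergence and endows the limit $\mathbb{R}\times Y$ with a parallel complex structure. Let $\partial_t$ be the unit parallel vector field along the split $\mathbb{R}$-factor; then $J\partial_t$ is again parallel of unit length, and since $\langle J\partial_t,\partial_t\rangle=0$ it is orthogonal to $\partial_t$ and hence tangent to $Y$. Its restriction to $Y$ is a parallel unit field, so $Y$ itself splits off a line and $\mathbb{R}\times Y=\mathbb{R}^2\times Z=\mathbb{C}\times Z$, whence $Y=\mathbb{R}\times Z$. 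But a Riemannian product $\mathbb{R}\times Z$ is noncompact, contradicting the compactness of $Y$; therefore no such soliton exists. The same circle of ideas yields the stronger conclusion quoted in the abstract---that nonnegative sectional curvature already forces flatness---by first invoking the strong maximum principle to reduce a non-flat soliton to the strictly positively curved case treated here.
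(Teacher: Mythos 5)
Your setup (positive bisectional curvature, Harnack, unique critical point, infinite asymptotic curvature ratio, blow-down limits that split a line with a lower-dimensional $\kappa$-solution cross-section) matches the paper's Sections 2--4. But your closing argument has a genuine gap, and it sits exactly where the paper has to work hardest. From the parallel unit field $J\partial_t$ on the limit you conclude that the cross-section ``splits off a line'' and is therefore noncompact. That inference is false: a complete manifold carrying a parallel unit vector field only has its \emph{universal cover} splitting off $\mathbb{R}$; the integral curves of $J\partial_t$ may be closed geodesics, in which case the limit splits off a flat cylinder $\mathbb{R}\times S^1$ rather than $\mathbb{C}$, and the cross-section can perfectly well be compact (a flat torus already carries parallel unit fields). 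Simple connectivity of $M$ does not rescue you, since Cheeger--Gromov blow-down limits of $\mathbb{R}^{2m}$ need not be simply connected --- the cigar soliton, diffeomorphic to $\mathbb{R}^2$ but asymptotic to $\mathbb{R}\times S^1$, is the standard cautionary example. The paper's Theorem \ref{soliton-1} explicitly records both possibilities $N_1=\mathbb{C}$ and $N_1=\mathbb{R}^1\times S^1$ for the flat factor, so your dichotomy collapses precisely at the case you would need to exclude.

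Ruling out that circle is the actual content of Section 5, and it is not a soft holonomy argument. The paper uses Bryant's global Poincar\'e coordinates to produce points $p_k\to\infty$ through which the orbits $\gamma_k$ of the Killing field $J\nabla f$ are closed with a \emph{fixed} period $2\pi/h_1$ and length at most $\sqrt{A_0}\,2\pi/h_1$ in $g$; under the blow-down rescaling $g_k=R(p_k)g$ these lengths are $O(R(p_k)^{1/2})$, while the smooth convergence to the limit together with $|J\nabla f|^2=A_0-R\to A_0$ and $\nabla_{J\nabla f}J\nabla f=\nabla R\to 0$ forces a uniform positive lower bound on their lengths (Lemma \ref{lower bound of length} and Lemma \ref{prop-contradiction argument}). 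The contradiction then comes from showing $R(p_k)\to 0$, which in complex dimension two uses the sharp decay $R(p,t)|t|\to 1$ of Corollary \ref{cor-of-splitting-theorem} and in higher dimensions an induction on dimension (Claim \ref{key-lemma}). None of this quantitative interplay between the period of the Killing orbits, the identity $R+|\nabla f|^2=A_0$, and the curvature decay appears in your proposal, so the argument as written does not close.
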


Theorem \ref{main-theorem-nonexistence-1} gives a positive evidence to Problem \ref{question-1}. As an application of Theorem \ref{main-theorem-nonexistence-1}, we get the following rigidity result.

\begin{theo}\label{theorem-eternal-flow}
Any $\kappa$-noncollapsed steady K\"ahler-Ricci soliton with nonnegative sectional curvature must be flat. More generally, any $\kappa$-noncollapsed noncompact and eternal K\"{a}hler-Ricci flow with nonnegative sectional curvature and uniformly bounded curvature must be a flat flow.
\end{theo}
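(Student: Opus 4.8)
The plan is to deduce both assertions from Theorem \ref{main-theorem-nonexistence-1} by a splitting-plus-rigidity argument, reducing the general situation to a genuine steady gradient soliton with positive sectional curvature, which Theorem \ref{main-theorem-nonexistence-1} forbids. First I would reduce the soliton statement to the eternal-flow statement: a steady K\"ahler-Ricci soliton generates a self-similar, hence eternal, K\"ahler-Ricci flow, and the soliton identity $R+|\nabla f|^2=\mathrm{const}$ together with nonnegative sectional curvature forces $0\le R\le C$ and $|Rm|\le C(n)R$, so the curvature is uniformly bounded. Thus it suffices to show that a $\kappa$-noncollapsed, noncompact, eternal K\"ahler-Ricci flow with uniformly bounded nonnegative sectional curvature is flat.

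So suppose $(M,g(t))$ is such a flow and is not flat. Nonnegative sectional curvature implies nonnegative holomorphic bisectional curvature, a condition preserved along the flow. By the strong maximum principle of Bando and Mok, for $t>0$ the kernel of the bisectional curvature is a parallel, holonomy-invariant distribution, so the universal cover splits holomorphically and isometrically as a $t$-independent product $\mathbb{C}^k\times N$, where the flat factor $\mathbb{C}^k$ is static and $N$ carries an eternal K\"ahler-Ricci flow with positive holomorphic bisectional curvature and $\dim_{\mathbb{C}}N\ge 1$. Since the removed factor is Euclidean, the flow on $N$ remains $\kappa'$-noncollapsed for some $\kappa'>0$, has uniformly bounded curvature, and inherits nonnegative sectional curvature; a further application of the strong maximum principle to the curvature operator on $N$ upgrades this to strictly positive sectional curvature for $t>0$, which is exactly the hypothesis Theorem \ref{main-theorem-nonexistence-1} concerns.

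Next I would convert this eternal flow into a steady gradient soliton using the trace Harnack inequality of Cao, the K\"ahler analogue of Hamilton's Harnack. Choosing spacetime points along which the scalar curvature tends to its spacetime supremum and invoking $\kappa'$-noncollapsing, the curvature bound, and Hamilton's compactness theorem, I would extract a pointed limit $(N_\infty,g_\infty(t),p_\infty)$ that is again a $\kappa'$-noncollapsed eternal K\"ahler-Ricci flow with positive sectional curvature, and on which the scalar curvature attains its spacetime maximum at $(p_\infty,0)$. The equality case of the Harnack inequality then forces $N_\infty$ to be a steady gradient K\"ahler-Ricci soliton; it is nonflat, $\kappa'$-noncollapsed, and has positive sectional curvature, contradicting Theorem \ref{main-theorem-nonexistence-1}. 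Hence $M$ is flat, and the same contradiction settles the soliton statement.

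The main obstacle is the passage from the eternal flow to a steady soliton: one must ensure the spacetime supremum of the scalar curvature is genuinely realized in the limit without losing $\kappa$-noncollapsing or positivity of the curvature, and that the Harnack equality case really yields a gradient steady soliton in the K\"ahler category. A secondary technical point is verifying that the Bando-Mok splitting is $t$-independent and descends the noncollapsing and curvature bounds to the irreducible factor $N$, together with the upgrade from positive bisectional to positive sectional curvature demanded by Theorem \ref{main-theorem-nonexistence-1}.
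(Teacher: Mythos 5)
Your overall strategy --- extract a limit realizing the equality case of Cao's Harnack inequality to produce a steady gradient K\"ahler--Ricci soliton, strip off the flat directions, and contradict Theorem \ref{main-theorem-nonexistence-1} --- is essentially the route the paper takes. But there is a genuine gap at the point where you invoke Theorem \ref{main-theorem-nonexistence-1}: that theorem, as stated, requires \emph{positive sectional} curvature, and none of your reductions deliver it. The Bando--Mok (or Cao dimension-reduction) splitting along the null space of the bisectional curvature yields a factor with positive holomorphic bisectional, hence positive Ricci, curvature; but positive bisectional curvature does not imply positive sectional curvature, and your proposed upgrade via ``the strong maximum principle applied to the curvature operator'' does not work. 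Nonnegative sectional curvature is strictly weaker than nonnegative curvature operator, so Hamilton's strong maximum principle does not apply to begin with; there is no analogous maximum principle for sectional curvature under Ricci flow in real dimension at least four; and even under a curvature-operator hypothesis the conclusion would concern the curvature operator, not the sectional curvature. The paper closes this gap differently: the induction in its proof of Theorem \ref{main-theorem-nonexistence-1} actually establishes the stronger statement that there is no $\kappa$-noncollapsed steady K\"ahler--Ricci soliton with nonnegative sectional curvature and \emph{positive Ricci} curvature, and it is this strengthened form, packaged as Corollary \ref{rigidity-soliton}, that is applied to the limit soliton. Treating Theorem \ref{main-theorem-nonexistence-1} purely as a black box, your argument cannot close.

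A secondary problem is the order of operations: you perform the splitting \emph{before} taking the Harnack limit, but strict positivity (bisectional, Ricci, or sectional) is not preserved under pointed Cheeger--Gromov limits, so the limiting soliton may again have degenerate directions. The paper instead takes time-shifted limits $g(t+t_k)$ at a fixed point $p$ first, uses the trace Harnack to show $R(p,\cdot)$ is nondecreasing and bounded so that $\partial_t R_\infty(p_\infty,t)\equiv 0$ and $R_\infty(p_\infty,t)\ge R(p,t')>0$, then applies Cao's Theorem 4.1 argument to identify the limit as a steady soliton, and only afterwards performs the dimension reduction. Two smaller omissions: the first assertion of the theorem carries no noncompactness hypothesis, so the compact case (dispatched in the paper by the maximum principle applied to $\Delta f+|\nabla f|^2=A_0$) needs a separate sentence; and your ``chase the spacetime supremum of $R$'' construction must be anchored at the chosen spacetime points to guarantee the supremum is attained at the limit basepoint, whereas fixing $p$ and shifting time, as the paper does, sidesteps this.
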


We use the induction argument to prove Theorem \ref{main-theorem-nonexistence-1} and first prove it for K\"ahler surfaces. The main technique is to analyze asymptotic behavior of positively curved steady Ricci solitons as used by many people, such as in \cite {Pe1}, \cite{N}, \cite {Na},  \cite{Br1},  etc.. By the blow-down argument, we first generalize Perelman's compactness theorem  for  3-dimensional $\kappa$-solution in \cite{Pe1} to higher dimensions (see Theorem \ref{compactness-theorem}). Then we apply the compactness theorem to the steady solitons and prove

\begin{theo}\label{soliton-1}
Let $(M,g, f)$ be a noncompact $\kappa$-noncollapsed steady K\"{a}hler-Ricci soliton with dimension $n$. Suppose that $M$ has nonnegative sectional curvature and positive Ricci curvature.
Then, for any $p_{i}\rightarrow\infty$, the sequence of rescaled flows $(M,R(p_{i})g(R^{-1}(p_{i})t);  p_{i})$ converges subsequently to a K\"{a}hler-Ricci flow
$(N_{1}\times N_{2},\widetilde{g}(t))$ ( $t\in (-\infty,0]$) in the Cheeger-Gromov topology, where
$$\widetilde{g}(t)={\rm d}z\otimes{\rm d}\overline{z}+g_{N_2}(t),$$
$N_{1}$
is $\mathbb C^1$ or $\mathbb R^1\times S^1$ with
the flat metric $g_{N_1}={\rm d}z\otimes{\rm d}\overline{z}$,  and $(N_{2},g_{N_{2}}(t))$ is a pseudo $\kappa$-solution (cf. Definition \ref{def-pesudo-solution}) of K\"{a}hler-Ricci flow on a complex manifold $N_2$ with dimension $n-1$.
Furthermore, in case ${\rm dim}_{\mathbb C} M=2$, $(N_2,g_{N_2}(t))=(\mathbb{C}\mathbb{P}^{1},(1-t)g_{FS})$, where $g_{FS}$ is the Fubini-Study metric of $\mathbb CP^1$.
\end{theo}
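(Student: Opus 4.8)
The plan is to view the steady soliton as an eternal K\"ahler--Ricci flow, parabolically rescale it at the marked points $p_i$ so that the scalar curvature is normalized, invoke the compactness theorem to extract an ancient limit, and then exploit the gradient potential to force that limit to split off a flat complex line. First I would record the soliton identities ${\rm Ric}+\nabla^2 f=0$ and $R+|\nabla f|^2=C$; after rescaling the metric once I may assume $C=1$, so that $R$ attains its maximum $1$ at the unique critical point of $f$ and $0\le R\le 1$. Since ${\rm Ric}>0$ and $M$ is noncompact, the scalar curvature decays at infinity, and I would first establish, from the soliton structure (integrating $\langle\nabla R,\nabla f\rangle=2\,{\rm Ric}(\nabla f,\nabla f)\ge 0$ along the flow of $\nabla f$ together with the linear growth of $f$), that $R(p_i)\to 0$ as $p_i\to\infty$.

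Next I would set $g_i(t)=R(p_i)\,g\!\left(R^{-1}(p_i)t\right)$ with marked point $p_i$, which normalizes $R_{g_i}(p_i,0)=1$. These flows are $\kappa$-noncollapsed (a scale-invariant property), have nonnegative curvature operator, and, using the soliton equations together with Shi-type control on $\nabla R$ to show that $R_g$ varies slowly on balls of fixed $g_i$-radius about $p_i$, carry uniformly bounded curvature on parabolic neighborhoods of $p_i$. Theorem \ref{compactness-theorem} then yields a subsequence converging in the Cheeger--Gromov topology to an ancient K\"ahler $\kappa$-solution $(M_\infty,g_\infty(t);p_\infty)$, $t\in(-\infty,0]$.

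The heart of the argument is the splitting. I would rescale the potential by $f_i=\sqrt{R(p_i)}\,(f-f(p_i))$. A direct computation gives $|\nabla^{g_i}f_i|^2_{g_i}=1-R$ and $\nabla^{2,g_i}f_i=-\sqrt{R(p_i)}\,{\rm Ric}$, whose $g_i$-norm is bounded by $R(p_i)^{-1/2}|{\rm Ric}|_g\lesssim R(p_i)^{1/2}$ on fixed balls around $p_i$, since $|{\rm Ric}|_g\lesssim R_g\approx R(p_i)$ there. Passing to the limit, $f_i\to f_\infty$ with $|\nabla f_\infty|\equiv 1$ and $\nabla^2 f_\infty\equiv 0$, so $\nabla f_\infty$ is a nontrivial parallel vector field; because $g_\infty$ is K\"ahler, $J\nabla f_\infty$ is parallel as well. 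The two parallel fields span a flat, totally geodesic complex line, so $M_\infty$ splits isometrically and holomorphically as $N_1\times N_2$ with $g_\infty(t)=g_{N_1}+g_{N_2}(t)$ and $N_1$ flat of complex dimension $1$. Since $f_\infty$ is proper along $\nabla f_\infty$, the line direction is an $\mathbb{R}$-factor, while the orbits of the Killing field $J\nabla f_\infty$ are either lines or circles; accordingly $N_1=\mathbb{C}$ or $N_1=\mathbb{R}^1\times S^1$. The remaining factor $(N_2,g_{N_2}(t))$ then inherits exactly the structure of a pseudo $\kappa$-solution in the sense of Definition \ref{def-pesudo-solution}.

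Finally, when $\dim_{\mathbb C}M=2$ the factor $N_2$ has complex dimension $1$, i.e. it is a real two-dimensional ancient solution with positive, bounded curvature; by the classification of two-dimensional $\kappa$-solutions it must be the shrinking round sphere, which in K\"ahler normalization is $(\mathbb{CP}^1,(1-t)g_{FS})$. I expect the main obstacle to be the second step: establishing the uniform curvature bound on the rescaled flows, and verifying that the limit is genuinely nonflat, needed to apply Theorem \ref{compactness-theorem}; this rests on the decay $R(p_i)\to 0$ together with gradient control on $R$. Once the limit exists, the potential computation makes the splitting essentially formal.
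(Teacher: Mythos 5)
Your overall strategy (rescale, apply Theorem \ref{compactness-theorem}, then split off a flat factor) matches the paper in its first two steps, but the splitting mechanism you propose rests on a claim that is not available at this stage, and this is a genuine gap. You assert that $R(p_i)\to 0$ as $p_i\to\infty$, deducing it from the monotonicity $\langle\nabla R,\nabla f\rangle=2\,{\rm Ric}(\nabla f,\nabla f)\ge 0$ along integral curves. That identity only shows that $R(\phi_t(p))$ is monotone, hence converges to \emph{some} limit $C\in[0,R(p)]$ as the curve escapes to infinity; it does not show $C=0$. In fact the paper treats the vanishing of this limit as a deep fact: it is exactly Claim \ref{key-lemma} in Section 5, proved only under the induction hypothesis on dimension (via a blow-down that produces a lower-dimensional soliton), and in complex dimension $2$ the decay $R\le C_2/\rho$ is Corollary \ref{cor-of-splitting-theorem}, which is itself \emph{derived from} Theorem \ref{soliton-1}. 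Your argument needs $R(p_i)\to 0$ essentially: with $f_i=\sqrt{R(p_i)}(f-f(p_i))$ one gets $|\nabla^{2,g_i}f_i|_{g_i}=R(p_i)^{-1/2}|{\rm Ric}|_g\lesssim R(p_i)^{1/2}$ on fixed $g_i$-balls, which tends to zero only if $R(p_i)\to0$; if $R(p_i)$ stays bounded below, $f_\infty$ need not be parallel and the splitting fails. Since the theorem is asserted for \emph{any} sequence $p_i\to\infty$, you cannot discard such sequences.

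The paper's proof avoids this entirely. It uses the \emph{lower} curvature bound $R(x)\ge C/\rho(x)$ (Proposition \ref{curvature-lower-estimate}, which follows from Proposition \ref{estimate-linear-decay} and Lemma \ref{lem-equivalence}) to conclude $\rho^2(o,p_i)R(p_i,0)\to\infty$; in the rescaled metrics the basepoint $o$ recedes to infinity, so together with nonnegative sectional curvature one constructs a geodesic line through $p_\infty$ in the limit and applies the Cheeger--Gromoll splitting theorem. The factor $N_1$ being $\mathbb C$ or $\mathbb R\times S^1$ then comes from the parallel field $J_\infty X$, not from the potential. If you want to rescue your approach you must either prove $R(p_i)\to0$ independently (which the paper does not know how to do without induction) or replace the potential-based splitting by the line construction. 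Two smaller points: Theorem \ref{compactness-theorem} only produces a \emph{pseudo} $\kappa$-solution (possibly unbounded curvature), so in the surface case you cannot directly quote Perelman's classification of $2$-dimensional $\kappa$-solutions; the paper needs Lemma \ref{round-sphere} to rule out the noncompact unbounded-curvature alternative. And the uniform curvature bound on the rescaled flows that you flag as the "main obstacle" is in fact already supplied by Theorem \ref{compactness-theorem} (via Lemma \ref{lem-curvature bound}), so no extra Shi-type argument is needed there.
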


Once Theorem \ref{soliton-1} is available, we study integral curves generated by the Killing vector field $J\nabla f$ on $(M,g,f)$. We show that there exists
a sequence of closed integral curves whose lengths has a positive lower bound under suitable rescaled metrics of $g$. On the other hand, we can use the global Poincar\'{e} coordinates on $M$ constructed by Bryant in
\cite {Bry} to prove that the length of those curves should tend to zero. This will lead to a contradiction!

We remark that the real version of Theorem \ref{soliton-1} is also true.

\begin{theo}\label{theorem-soliton-real}
Let $(M,g, f)$ be a noncompact $\kappa$-noncollapsed steady Ricci soliton with dimension $n$. Suppose that $M$ has nonnegative curvature operator and positive Ricci curvature. We also assume that $(M,g,f)$ has a unique equilibrium point.
Then, for any $p_{i}\rightarrow\infty$, the sequence of rescaled flows $(M,R(p_{i})g(R^{-1}(p_{i})t);  p_{i})$ converges subsequently to a Ricci flow
$(\mathbb{R}\times N,\widetilde{g}(t))$ ( $t\in (-\infty,0]$) in the Cheeger-Gromov topology, where
$$\widetilde{g}(t)={\rm d}s\otimes{\rm d}s+g_{N}(t),$$
and $(N,g_{N}(t))$ is a pseudo $\kappa$-solution on $N$ with dimension $n-1$.
\end{theo}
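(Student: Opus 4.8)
The plan is to follow the proof of Theorem~\ref{soliton-1}, but now split off only the gradient direction, since the real setting lacks the complex structure that supplied a second flat direction. I would first record the steady gradient soliton identities $\mathrm{Ric}+\nabla^2 f=0$, $R+|\nabla f|^2=R_{\max}$ and $\nabla R=2\,\mathrm{Ric}(\nabla f,\cdot)$. Positivity of the Ricci curvature gives $\nabla^2 f=-\mathrm{Ric}<0$, so $f$ is strictly concave; the assumed unique equilibrium point $o$ is then the unique maximum of both $f$ and $R$, whence $R<R_{\max}$ and $\nabla f\neq 0$ on $M\setminus\{o\}$. Using the known curvature decay $R(x)\to 0$ as $x\to\infty$ for such solitons, we get $|\nabla f|^2=R_{\max}-R\to R_{\max}>0$ near infinity, so the unit field $X:=\nabla f/|\nabla f|$ is well defined with controlled geometry around the blow-up points $p_i$.

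Next, the soliton generates an eternal flow with nonnegative curvature operator, bounded curvature and uniform $\kappa$-noncollapsing at the curvature scale, so Theorem~\ref{compactness-theorem} applies: along any $p_i\to\infty$, a subsequence of $(M,R(p_i)g(R^{-1}(p_i)t);p_i)$ converges in the Cheeger--Gromov topology to an ancient solution $(M_\infty,g_\infty(t);p_\infty)$, $t\in(-\infty,0]$, with nonnegative curvature operator and bounded curvature. Writing $\tilde g_i=R(p_i)g$ and letting $\tilde X$ be the $\tilde g_i$-unit vector $\nabla f/|\nabla f|_{\tilde g_i}$, differentiating the $g$-unit field $X$ and using $\nabla^2 f=-\mathrm{Ric}$ gives $\nabla_iX_j=-R_{ij}/|\nabla f|+(\nabla f)_j R_{ik}(\nabla f)^k/|\nabla f|^3$, hence
\[
|\tilde\nabla\tilde X|_{\tilde g_i}^2=R(p_i)^{-1}|\nabla X|_g^2\;\lesssim\;\frac{R(p_i)}{R_{\max}-R(p_i)}\longrightarrow 0\quad(i\to\infty),
\]
since $|\mathrm{Ric}|\lesssim R$ and $R(p_i)\to 0$. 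Thus the limit field $X_\infty$ is parallel on $(M_\infty,g_\infty(0))$.

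A globally defined parallel unit vector field on the complete manifold $(M_\infty,g_\infty(0))$ induces an isometric splitting $M_\infty=\mathbb{R}\times N$ with the $\mathbb{R}$-factor tangent to $X_\infty$ (equivalently, the integral curves of $X_\infty$ are lines and one invokes the Cheeger--Gromoll theorem, valid since $\mathrm{Ric}_{g_\infty}\ge 0$). Because $X_\infty$ is parallel we have $\mathrm{Ric}_{g_\infty}(X_\infty,\cdot)=0$, so the flat direction lies in the kernel of the curvature operator; by Hamilton's strong maximum principle this kernel is invariant, so the splitting persists for all $t\le 0$ and the limit flow is $\tilde g(t)=\mathrm{d}s\otimes\mathrm{d}s+g_N(t)$ on $\mathbb{R}\times N$, with $\dim N=n-1$. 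Finally $(N,g_N(t))$ inherits nonnegative curvature operator, bounded curvature and ancientness, and---once the flat line is removed---the $\kappa$-noncollapsing at the curvature scale, so it meets Definition~\ref{def-pesudo-solution}: it is a pseudo $\kappa$-solution of dimension $n-1$.

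The crux is the parallelism of $X_\infty$, which rests on two inputs: the curvature-decay estimate $R(x)\to 0$ at infinity (so that the Hessian-to-gradient ratio governing $|\tilde\nabla\tilde X|$ dies under the rescaling by $R(p_i)^{-1}$) and locally uniform control of $X$ and its derivatives that survives passage to the Cheeger--Gromov limit. An alternative is to show directly that the rescaled unit-speed integral curves of $\nabla f$ through $p_i$ converge to a line in $M_\infty$ and then split via Cheeger--Gromoll; either way the decay estimate together with the standing hypotheses (positive Ricci, unique equilibrium, nonnegative curvature operator) is exactly what makes the gradient direction asymptotically flat and parallel.
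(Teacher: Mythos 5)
Your overall architecture (rescale, apply Theorem \ref{compactness-theorem}, split off the gradient direction, propagate the splitting in time by the strong maximum principle) is reasonable, but the step that actually produces the flat $\mathbb{R}$-factor rests on an assertion that is not available: you invoke ``the known curvature decay $R(x)\to 0$ as $x\to\infty$'' in order to conclude $|\tilde\nabla \tilde X|^2_{\tilde g_i}\lesssim R(p_i)/(R_{\max}-C\,R(p_i))\to 0$, so that the rescaled unit gradient field becomes parallel in the limit. That decay is not a known fact for $\kappa$-noncollapsed steady solitons with nonnegative curvature operator and positive Ricci curvature, and it is not established anywhere in the paper at this stage. What is proved before this theorem is only the \emph{lower} bound $R(x)\ge C/\rho(x)$ (Proposition \ref{curvature-lower-estimate}); the statement that $R\to 0$ along the gradient flow lines is Claim \ref{key-lemma}, which appears only later, is proved only under the induction hypothesis that no lower-dimensional such solitons exist, and is essentially equivalent to the hard part of the main theorem (if $R(\phi_t(p))\to c>0$, the blow-down is a non-flat steady soliton splitting off a line, which is exactly the configuration the induction must exclude). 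Since the theorem is asserted for an \emph{arbitrary} sequence $p_i\to\infty$, you must also handle sequences along which $R(p_i)$ stays bounded away from $0$; there your bound on $|\tilde\nabla\tilde X|$ does not tend to zero, the unit gradient field has no reason to be parallel in the limit, and your mechanism for producing the flat direction fails even though the conclusion must still hold.

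The paper's route avoids this entirely: from $R(x)\ge C/\rho(x)$ it gets $\rho^2(o,p_i)R(p_i,0)\to\infty$ with no upper decay needed, then produces a geodesic line through $p_\infty$ in the rescaled limit by the Toponogov comparison-angle argument of Theorem 5.35 in \cite{MT}, and finally applies the Cheeger--Gromoll splitting theorem to get $M_\infty=\mathbb{R}\times N$; this works whether or not $R(p_i)\to 0$. To keep your ``parallel gradient field'' mechanism you would first have to prove $R(p_i)\to 0$ for the given sequence, which cannot be done here without circularity; otherwise replace that step by the $\rho^2 R\to\infty$ plus Toponogov line construction. Two smaller points: with the paper's sign convention $R_{ij}=\nabla_i\nabla_j f$, positive Ricci curvature makes $f$ strictly \emph{convex} with $o$ its minimum, not concave with a maximum; and your remaining steps --- inheritance of the Harnack inequality, $\kappa$-noncollapsing and nonnegative curvature operator by the factor $N$, and the time-propagation of the splitting --- are fine and consistent with Definition \ref{def-pesudo-solution}.
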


The proof of Theorem \ref{theorem-soliton-real} is the same as Theorem \ref{soliton-1}. Theorem \ref{theorem-soliton-real} gives an asymptotic behavior of $\kappa$-noncollapsed steady solitons with nonnegative curvature operator in higher dimensions.

The paper is organized as follows. In Section 2, we recall some facts on $\kappa$-solution. In Section 3, we give a generalization of Perelman's compactness theorem to  higher dimensional $\kappa$-solutions. In Section 4, we analyze the asymptotic geometry of steady solitons and prove Theorem \ref{soliton-1}. Both of Theorem \ref{main-theorem-nonexistence-1} and Theorem \ref{theorem-eternal-flow} are  proved in Section 5.

In a subsequel of papers \cite{DZ2}, we improve Theorem \ref{main-theorem-nonexistence-1} as follows.

\begin{theo}\label{main-theorem-improved}There is no
$\kappa$-noncollapsed steady gradient K\"{a}hler-Ricci soliton with positive  bisectional curvature.
\end{theo}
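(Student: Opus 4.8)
The plan is to run the same induction-and-blow-down scheme that proves Theorem \ref{main-theorem-nonexistence-1}, but with the hypothesis of positive sectional curvature replaced throughout by the weaker positive holomorphic bisectional curvature. The substitution is legitimate because nonnegative holomorphic bisectional curvature, unlike nonnegative sectional curvature, is preserved under the K\"ahler-Ricci flow (by the theorems of Bando and Mok). Hence the blow-down limits of the rescaled flows again carry nonnegative bisectional curvature, and the higher dimensional compactness theorem, Theorem \ref{compactness-theorem}, applies to produce pseudo $\kappa$-solutions exactly as in the sectional case. Since positive bisectional curvature already forces positive Ricci curvature, no extra curvature hypothesis is needed. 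The whole scheme reduces to establishing the bisectional analog of Theorem \ref{soliton-1}, after which the nonexistence follows from the Section 5 contradiction.

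First I would prove that analog: for a noncompact $\kappa$-noncollapsed steady gradient K\"ahler-Ricci soliton $(M,g,f)$ with positive bisectional curvature and for any $p_i\to\infty$, the rescaled flows $(M,R(p_i)g(R^{-1}(p_i)t);p_i)$ subconverge to a product $(N_1\times N_2,\widetilde{g}(t))$ in which $N_1=\mathbb{C}^1$ or $\mathbb{R}^1\times S^1$ is flat and $(N_2,g_{N_2}(t))$ is an $(n-1)$-dimensional pseudo $\kappa$-solution with nonnegative bisectional curvature, and for which $(N_2,g_{N_2}(t))=(\mathbb{CP}^1,(1-t)g_{FS})$ in complex dimension two. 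The base case is essentially unchanged: when $N_2$ is a curve the bisectional and sectional curvatures coincide, so the dimensional reduction genuinely feeds a weaker hypothesis only in the higher-dimensional inductive steps.

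The flat splitting is obtained in two stages. The soliton geometry at infinity provides a line in the blow-down limit, so Cheeger--Gromoll splits off a real factor $\mathbb{R}$; the K\"ahler strong maximum principle then upgrades this to a holomorphic splitting. Concretely, along the limit flow the bisectional curvature is nonnegative but degenerate, and the tensor maximum principle applied to the reaction term in the evolution equation for the curvature operator under the K\"ahler-Ricci flow (in the form due to Bando and Mok) forces the null space of the bisectional curvature form to be a parallel distribution. Its $J$-invariance then promotes the split real line to a flat complex line $N_1$, as stated. Once this is in place, the dimension reduction on $N_2$ and the contradiction argument of Section 5 --- bounding from below the lengths of the closed integral curves of the Killing field $J\nabla f$ in the rescaled metrics, and then contradicting this with Bryant's global Poincar\'{e} coordinates --- carry over unchanged.

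The hard part will be this splitting step. In the positive sectional curvature case the product structure can be read off from soul- and Toponogov-type comparison geometry, which is no longer available under the weaker bisectional hypothesis. One must instead argue entirely through the maximum principle for the curvature operator, verify that the kernel of the bisectional curvature is invariant and parallel under the flow, and, crucially, check that this kernel integrates to a genuine \emph{holomorphic}, isometric splitting compatible with $\nabla f$ and its rotation $J\nabla f$. Controlling the interaction of the split flat factor with the soliton vector field is the main technical obstacle; with the holomorphic flat factor peeled off, the length estimates on the integral curves, and hence the contradiction, proceed as in the positive sectional curvature case.
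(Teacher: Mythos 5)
First, a point of bookkeeping: this paper does not prove Theorem \ref{main-theorem-improved} at all --- it is stated in the introduction as a result of the sequel \cite{DZ2}, so there is no internal proof to compare against. Your proposal must therefore be judged on whether it genuinely upgrades the machinery of Sections 3--5 from sectional to bisectional curvature, and it does not: it correctly locates the obstruction and then leaves it unresolved. The crux of the proof of Theorem \ref{soliton-1} (and likewise of Lemma \ref{round-sphere}) is the construction of a geodesic line through $p_\infty$ in the blow-down limit, which the paper obtains from $\rho^2(o,p_i)R(p_i,0)\to\infty$ \emph{together with the nonnegative sectional curvature condition} via the Toponogov-type argument of Theorem 5.35 in \cite{MT}. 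Under only nonnegative bisectional curvature some real sectional curvatures may be negative, so this comparison argument is simply unavailable, and without the line Cheeger--Gromoll gives you nothing. Your substitute --- the Bando--Mok tensor maximum principle forcing the null space of the bisectional curvature form to be parallel --- addresses a different step: it upgrades an already-existing degenerate direction to a parallel, $J$-invariant, hence holomorphically split one. It cannot manufacture the degenerate direction in the first place. You acknowledge this is ``the hard part'' and ``the main technical obstacle,'' which is an honest assessment, but it means the proposal is a programme rather than a proof.

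Two smaller points. Your claim that ``the base case is essentially unchanged'' because bisectional and sectional curvature coincide on the curve factor $N_2$ is misdirected: the base case of the induction is a complex \emph{surface} $M$ with positive bisectional curvature, and the line must be produced in the blow-down of $M$ itself, not of $N_2$, so the base case suffers from exactly the same gap. On the positive side, your observations that nonnegative bisectional curvature is preserved by the flow (so the blow-down limits remain in the class of Definition \ref{def-pesudo-solution}), that positive bisectional curvature forces positive Ricci curvature, and that the compactness theorem (Theorem \ref{compactness-theorem}) and the curvature decay estimates of Section 4 go through unchanged, are all correct; the contradiction mechanism of Section 5 via Bryant's Poincar\'e coordinates and Lemma \ref{lower bound of length} would indeed carry over once the splitting at infinity is established. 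The missing idea is precisely a replacement for the Toponogov line construction under the weaker curvature hypothesis, and that is the substantive content deferred to \cite{DZ2}.
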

\vskip6mm

\section{Preliminary on $\kappa$-solutions}

A Riemannian metric $(M,g)$ is called a (gradient) Ricci soliton if there exists a smooth function $f$ on $M$ such that
\begin{equation}\label{soliton-real}
R_{i{j}}-\lambda g_{ij}=\nabla_{i}\nabla_{{j}}f,
\end{equation}
where $R_{i{j}}$ are components of Ricci curvature of $g$, $\nabla$ is a co-derivative associated to $g$ and $\lambda$ is a constant. $(M, g)$
is called shrinking, steady, or expanding according to $\lambda>, =, <0$, respectively.
In case that $(M,J)$ is an $n$-dimensional complex manifold and $g$ is a K\"{a}hler metric, then we call $(M, g)$ a K\"{a}hler-Ricci soliton.
It is easy to see that ($\ref{soliton-real}$) is equivalent to
\begin{equation}\label{soliton-complex}
R_{i\bar{j}}-\lambda g_{i\bar j}=\nabla_{i}\nabla_{\bar{j}}f, \mbox{\quad}\nabla_{\bar{i}}\nabla_{\bar{j}}f=0.
\end{equation}
Let $\varphi_{t}$ and $\psi_{t}$ be the one parameter group generated by vector field $\nabla f$ and $J\nabla f$, respectively. Then
$\varphi_{t}$, $\psi_{t}$ are two families of biholomorphisms of $M$. Moreover $\psi_{t}$ are isometric transformations since $J\nabla f$ is a Killing vector field (cf. \cite{Bry}).

Recall that a complete $n$-dimensional Riemannian manifold $(M^n,g)$ is called $\kappa$-noncollapsed if there exists some $\kappa>0$ such that
${\rm vol}(B(p,r))\geq \kappa r^{n}$ for any $r>0$
whenever $|{\rm Rm}(q)|\leq r^{-2}$ for all $q\in B(p,r)$. For a solution of Ricci flow, Perelman introduced \cite{Pe1},

\begin{defi} Let $g=g(t)$ be a solution of Ricci flow on $M$,
\begin{align}\label{ricci-equation} \frac{\partial g}{\partial t}=-2 {\rm Ric}(g),~t\in(a,b].
\end{align}
We say that $(M,g(t))$ is $\kappa$-noncollapsed on scales at most $r_{0}$ if there exists some $\kappa>0$ such that
$${\rm vol}(B(p,r, t))\geq \kappa r^{n},$$
whenever $|{\rm Rm}(q,t^{\prime})|\leq r^{-2}$\footnote{ It  can be replaced by $|{\rm Rm}(q,t^{\prime})|\leq  C_0 r^{-2}$ for some uniform constant $C_0$.}  for all $q\in B(p,r, t)$,  $t^{\prime}\in(t-r^{2},t]$, $a\leq t-r^{2}$ and $0\leq r\leq r_{0}$.
We say that $(M,g(t))$ is $\kappa$-noncollapsed if it is $\kappa$-noncollapsed on all scales $r_{0}\leq\infty$.
\end{defi}

\begin{defi} A complete solution $(M, g(t))$ of (\ref{ricci-equation}) is called
ancient if it is defined on $(-\infty, 0]$ and the curvature operator of $g(t)$ is bounded and nonnegative for any $t\in (-\infty, 0]$.
A complete K\"{a}hler-Ricci flow $(M, g(t))$ on $t\in (-\infty,0]$ is called ancient if the bisectional curvature of $g(t)$ is bounded
and nonnegative for any $t\in (-\infty, 0]$. Without confusion, we call a $\kappa$-noncollapsed, non-flat ancient solution of (\ref{ricci-equation}) a $\kappa$ (K\"ahler) solution.
\end{defi}

For a complete noncompact Riemannian manifold $(M,g)$ with nonnegative Ricci curvature, we define the asymptotical volume by
$$\mathcal{V}(M,g)=\lim_{r\rightarrow\infty}\frac{{\rm vol}(B(p,r))}{r^{n}}.$$
Clearly, $\mathcal{V}(M,g)$ is independent of the choice of $p$. The following result says that it is always zero for an ancient solution $(M,g(t))$ (cf. \cite{Pe1}, \cite{N}).

\begin{prop}\label{Volume}
Suppose that $(M,g(t))$ is a noncompact and non-flat ancient (K\"{a}hler) solution. Then $\mathcal{V}(M,g(t))=0$ for all $t\leq0$.
\end{prop}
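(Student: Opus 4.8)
The plan is to argue by contradiction, exploiting the scale invariance of the asymptotic volume ratio together with the existence of an asymptotic shrinking soliton obtained from the blow-down. Since the curvature operator (resp. the bisectional curvature, in the K\"ahler case) is nonnegative, the Ricci curvature is nonnegative, so by Bishop--Gromov the limit defining $\mathcal{V}(M,g(t))$ exists and lies in $[0,\omega_n]$; moreover $r\mapsto \mathrm{vol}(B(p,r))/r^{n}$ is nonincreasing, whence $\mathrm{vol}(B(p,r))/r^{n}\ge \mathcal{V}(M,g(t))$ for every $r>0$. Suppose, contrary to the assertion, that $\mathcal{V}(M,g(t_{0}))=a>0$ for some $t_{0}\le 0$, and fix a basepoint $p$.

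First I would produce the asymptotic shrinking soliton. Because $(M,g(t))$ is a noncompact, nonflat $\kappa$-solution, Perelman's reduced-volume blow-down applies \cite{Pe1}: there is a sequence $\tau_{j}\to\infty$ such that the parabolically rescaled flows $g_{j}(t)=\tau_{j}^{-1}g(t_{0}+\tau_{j}t)$, based at $(p,t_{0})$, subconverge in the pointed Cheeger--Gromov topology, using the $\kappa$-noncollapsing and the uniform curvature bounds for compactness, to a complete nonflat gradient shrinking soliton $(M_{\infty},g_{\infty}(t))$; in the K\"ahler case this is Ni's construction \cite{N} and the limit is a K\"ahler shrinker with nonnegative bisectional curvature. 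The crucial observation is that the asymptotic volume ratio is invariant under parabolic rescaling, so $\mathcal{V}(M,g_{j}(0))=a$ for every $j$. I would then pass this lower bound to the limit: for each fixed $r>0$ the smooth convergence gives $\mathrm{vol}_{g_{j}(0)}(B(p_{j},r))\to \mathrm{vol}_{g_{\infty}(0)}(B(p_{\infty},r))$, while the Bishop--Gromov estimate gives $\mathrm{vol}_{g_{j}(0)}(B(p_{j},r))/r^{n}\ge a$; letting $j\to\infty$ and then $r\to\infty$ yields $\mathcal{V}(M_{\infty},g_{\infty}(0))\ge a>0$.

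The contradiction then comes from the rigidity of shrinkers: a complete nonflat gradient shrinking (K\"ahler-)Ricci soliton with nonnegative Ricci (resp. bisectional) curvature has vanishing asymptotic volume ratio, equivalently, positive asymptotic volume ratio forces it to be the flat Gaussian soliton. Since $(M_{\infty},g_{\infty})$ is nonflat yet satisfies $\mathcal{V}(M_{\infty},g_{\infty}(0))\ge a>0$, this is impossible, so $a=0$; as $t_{0}$ was arbitrary, $\mathcal{V}(M,g(t))=0$ for all $t\le 0$. I expect the main obstacle to be the rigidity step, namely ruling out Euclidean volume growth for a nonflat nonnegatively curved shrinker, which is precisely where the soliton structure (the quadratic growth of the potential and the incompatibility of a smooth soliton with a nonflat cone at infinity) must be exploited; the existence and nonflatness of the asymptotic soliton in the K\"ahler category, via \cite{N}, is the other delicate ingredient.
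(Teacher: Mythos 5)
You should first be aware that the paper does not prove Proposition \ref{Volume} at all: it is quoted from \cite{Pe1} (Section 11.4) and \cite{N}, where the argument is an induction on dimension. (Assuming $\mathcal{V}>0$, either $R(x)\rho^{2}(p,x)$ stays bounded, in which case a blow-down of the time slice produces a nonflat metric cone, which is impossible for a Ricci flow with nonnegative curvature operator by the strong maximum principle and Bishop--Gromov rigidity; or one point-picks a sequence with $R\rho^{2}\to\infty$, rescales, splits off a line by Toponogov, and contradicts the inductive hypothesis in dimension $n-1$.) Your route through the asymptotic shrinking soliton is genuinely different, and as written it has three gaps. First, the hypothesis here is only ``ancient solution'' in the sense of the paper's definition --- bounded nonnegative curvature operator (respectively bisectional curvature), with \emph{no} $\kappa$-noncollapsing --- and the paper applies the proposition to exactly such limits (see the end of the proof of Lemma \ref{cor of Volume}); your opening move, Perelman's reduced-volume blow-down, is a theorem about $\kappa$-solutions. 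The assumption $\mathcal{V}(M,g(t_{0}))=a>0$ does give volume noncollapsing of the time-$t_{0}$ slice at all scales, but the asymptotic-soliton construction requires noncollapsing (or reduced-volume lower bounds) at times $t_{0}-\tau_{j}\to-\infty$, which does not follow without further argument.

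Second, your transfer of the lower bound $a$ to the limit conflates time slices and basepoints: the asymptotic soliton is extracted from the slices $g(t_{0}-\tau_{j}\,\cdot\,)$ around the moving $\ell$-centers $q_{j}$, not from $g_{j}(0)=\tau_{j}^{-1}g(t_{0})$ around the fixed point $p$, so scale invariance of $\mathcal{V}$ alone does not give $\mathcal{V}(M_{\infty},g_{\infty})\ge a$; you would additionally need that $\mathcal{V}(M,g(t))$ is constant (or at least monotone) in $t$ along the flow, which is true for complete solutions with bounded nonnegative Ricci curvature but is itself a nontrivial fact that must be stated and justified. Third, and most seriously, the step you yourself flag as the main obstacle --- that a nonflat gradient shrinker with nonnegative Ricci (or bisectional) curvature has vanishing asymptotic volume ratio --- is left unproved, and it is not an elementary rigidity statement: with only nonnegative Ricci curvature I do not know it in the literature in the generality you assert, and for nonnegative curvature operator the standard proofs pass through essentially the same dimension-reduction argument as Proposition \ref{Volume} itself (a shrinker is in particular a self-similar ancient solution), so your scheme risks being circular. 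As it stands the proposal reduces the proposition to two unproved inputs of comparable difficulty; the dimension-reduction induction of \cite{Pe1} and \cite{N} avoids all three issues and is what should be reproduced or cited precisely.
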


Next, we define the asymptotical scalar curvature of $g$ by
$$\mathcal{R}(M,g)={\rm limsup}_{\rho(p,x)\rightarrow\infty}R(x)\rho^{2}(p, x),$$
where $\rho(p, \cdot)$ is a distance function from a fixed point $p\in M$.
It is easy to see that $\mathcal{R}(M,g)$ is independent of the choice of $p$. By Proposition \ref{Volume}, we prove

\begin{cor}\label{ASR}
The asymptotical scalar curvature $\mathcal{R}(M,g(t))$ of a noncompact $\kappa$ (K\"ahler) solution $(M,g(t))$ is infinite.
\end{cor}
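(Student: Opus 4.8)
The plan is to argue by contradiction: suppose $\mathcal{R}(M,g(0))=C_0<\infty$ and derive a positive lower bound for the asymptotic volume ratio $\mathcal{V}(M,g(0))$, contradicting Proposition \ref{Volume}. By time translation it suffices to treat the slice $t=0$. First I would record that finite asymptotic scalar curvature forces quadratic decay of the \emph{full} curvature away from a fixed base point $p$: there is $r_0$ so that $R(x,0)\rho^2(p,x)\le C_0+1$ whenever $\rho(p,x)\ge r_0$, and since the curvature operator (resp. the bisectional curvature, in the K\"ahler case) is nonnegative one has $|{\rm Rm}|\le c_n R$ for a dimensional constant $c_n$; hence $|{\rm Rm}(x,0)|\le C\rho^{-2}(p,x)$ for $\rho(p,x)\ge r_0$.

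Next I would exploit the $\kappa$-noncollapsing at a far away point. Fix a large $r$ and choose $x$ with $\rho(p,x,0)=r$, which exists since $M$ is noncompact. For $y\in B(x,\epsilon r,0)$ one has $\rho(p,y,0)\ge(1-\epsilon)r$, so $|{\rm Rm}(y,0)|\le c_nC((1-\epsilon)r)^{-2}$; choosing $\epsilon=\epsilon(n,C)$ small so that $c_nC\epsilon^2\le(1-\epsilon)^2$ makes this at most $(\epsilon r)^{-2}$. To feed this into the parabolic noncollapsing I must control the curvature at the earlier times $t'\in(-(\epsilon r)^2,0]$ as well; here I would invoke Hamilton's trace Harnack inequality (and its K\"ahler analogue), which for an ancient solution with nonnegative curvature operator gives $\partial_t R\ge0$, hence $R(y,t')\le R(y,0)$ and therefore $|{\rm Rm}(y,t')|\le c_nR(y,t')\le(\epsilon r)^{-2}$ throughout the parabolic neighborhood. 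The $\kappa$-noncollapsing hypothesis then yields ${\rm vol}(B(x,\epsilon r,0))\ge\kappa(\epsilon r)^n$.

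Finally, since $B(x,\epsilon r,0)\subset B(p,2r,0)$, we obtain ${\rm vol}(B(p,2r,0))\ge\kappa\epsilon^n r^n$, so that
$$\frac{{\rm vol}(B(p,2r,0))}{(2r)^n}\ge\frac{\kappa\epsilon^n}{2^n}>0.$$
Letting $r\to\infty$ gives $\mathcal{V}(M,g(0))\ge\kappa\epsilon^n 2^{-n}>0$, contradicting Proposition \ref{Volume}. The same argument applies verbatim at any $t\le0$, so $\mathcal{R}(M,g(t))=\infty$ for all $t$.

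I expect the main obstacle to be the legitimate application of the parabolic $\kappa$-noncollapsing: its definition demands the curvature bound on an entire backward parabolic neighborhood, not merely on the time-$0$ ball, and the cigar soliton (finite asymptotic scalar curvature yet collapsed) shows that the noncollapsing hypothesis is indispensable. Controlling the backward-time curvature is precisely what the Harnack monotonicity of $R$ supplies, so the crux is ensuring that monotonicity is available under the stated hypotheses. The only other point needing care is the inequality $|{\rm Rm}|\le c_nR$ under nonnegative bisectional curvature in the K\"ahler setting.
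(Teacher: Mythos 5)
Your proposal is correct and follows essentially the same route as the paper: assume the asymptotic scalar curvature is finite, use the resulting quadratic curvature decay together with the $\kappa$-noncollapsing on a far-away ball of radius comparable to its distance from the base point, and conclude a positive asymptotic volume ratio contradicting Proposition \ref{Volume}. Your treatment is in fact slightly more careful than the paper's on two points it leaves implicit, namely the backward-in-time curvature control via the Harnack monotonicity $\partial_t R\ge 0$ and the passage from a scalar curvature bound to a full curvature bound under nonnegative curvature operator (resp. bisectional curvature).
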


\begin{proof}
We prove the corollary by contradiction. Suppose $\mathcal{R}(M,g(t_0))<A$ for some positive constant $A>1$ and $t_0\leq 0$. For a fixed point $p\in M$,
we have $R(x,t_0)\leq Ar^{-2}$ for all $x\in M\setminus B(p,r, t_0)$ when $r>r_{0}$. Fix any $q\in B(p, 3\sqrt{A}r, t_0)\setminus B(p, 2\sqrt{A}r, t_0)$. Then, we have $R(x,t_0)\leq r^{-2}$ for all $x\in B(q,r, t_0)$.
It follows
 $$|{\rm Rm}(x,t_0)\leq  C_0r^{-2},~\forall ~x\in B(q,r, t_0).$$
Since $(M,g(t_0))$ is $\kappa$-noncollapsed,  we get ${\rm vol} (B(q,r, t_0))\geq \kappa r^{n}$. By the volume comparison theorem,
\begin{align}{\rm vol}(B(p, (3\sqrt{A}+1)r, t_0) &\geq{\rm vol}B(q, r, t_0)\notag\\
&\ge \kappa(3\sqrt{A}+1)^{-n}(3\sqrt{A}+1)r)^{n}, ~\forall ~r>r_{0}.\notag
\end{align}
It follows
$$\mathcal{V}(M,g(t))\geq \kappa(3\sqrt{A}+1)^{-n}.$$
This is a contradiction with  Proposition \ref{Volume}!
\end{proof}

\vskip6mm

\vskip6mm

\section {Perelman's compactness theorem}

In \cite{Pe1}, Perelman proved the following compactness theorem for $3$-dimen-sional $\kappa$-solutions.

\begin{theo}\label{Perelman's-compactness-theorem}
Let $(M_{k},g_{k}(t); p_{k})$ be a sequence of 3-dimensional $\kappa$-solution on a noncompact manifold $M$ with $R(p_{k},0)=1$. Then, $(M_k,g_k(t);p_{k})$ subsequently converge to a $\kappa$-solution.
\end{theo}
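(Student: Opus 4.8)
The plan is to deduce this from Hamilton's Cheeger--Gromov compactness theorem for pointed Ricci flows, whose hypotheses are (i) uniform bounds on the curvature (and, via Shi's local estimates, its derivatives) on balls of each fixed radius about the base points $p_k$, uniform in $k$ and over compact time intervals, and (ii) a uniform positive lower bound on the injectivity radius at $p_k$. Once (i) and (ii) hold, a diagonal extraction yields a smooth pointed limit flow $(M_\infty,g_\infty(t);p_\infty)$, and it remains to check that the limit is again a $\kappa$-solution. The bulk of the work is verifying (i).

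First I would dispose of the time variable. Since each $g_k(t)$ is an ancient solution with nonnegative (hence bounded) curvature operator, Hamilton's trace Li--Yau--Harnack inequality applies, and in the ancient case it degenerates to $\partial_t R\ge 0$; thus $R(\cdot,t)$ is pointwise nondecreasing in $t$ and $R(x,t)\le R(x,0)$ for all $t\le 0$. Hence it suffices to bound the curvature at $t=0$ on each fixed ball, after which Shi's estimates (which look into the past, always available here) promote this to bounds on all derivatives. The heart of the matter is then the a priori estimate that \emph{curvature is bounded at bounded distance}: there should be a function $\omega=\omega_\kappa$, depending only on $\kappa$, such that for every $3$-dimensional $\kappa$-solution and every $x$,
\[
R(x,0)\le R(p,0)\,\omega\!\left(R(p,0)^{1/2}\rho(p,x)\right),
\]
where $\rho$ is the $t=0$ distance. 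With the normalization $R(p_k,0)=1$ this reads $R(x,0)\le\omega(\rho(p_k,x))$, giving exactly the $k$-uniform bound on each ball $B(p_k,\rho,0)$ needed for (i). I would prove this estimate by contradiction together with a point-picking argument: assuming it fails, one selects points of nearly maximal curvature $Q_k\to\infty$ at bounded distance, rescales by $Q_k$, uses $\kappa$-noncollapsing for an injectivity-radius bound, and extracts a limiting noncompact ancient solution. The contradiction is then produced by Hamilton's dimension-reduction machinery, which is driven precisely by Corollary \ref{ASR}: since the asymptotic scalar curvature ratio of any noncompact $\kappa$-solution is infinite, one may split off lines and reduce to a lower-dimensional $\kappa$-solution; in dimension three this forces a round shrinking cylinder $\mathbb{R}\times S^2$ (the two-dimensional $\kappa$-solutions being classified as the round shrinking spheres), whose rigidity clashes with the way the high-curvature points were chosen.

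For the injectivity-radius bound (ii), the normalization $R(p_k,0)=1$ together with the just-established local curvature estimate gives a uniform curvature bound on the unit-scale ball at $p_k$; $\kappa$-noncollapsing then yields ${\rm vol}(B(p_k,1,0))\ge\kappa$, and since the curvature operator is nonnegative and bounded, the Cheeger--Gromov--Taylor type lower bound on the injectivity radius follows. Having extracted $(M_\infty,g_\infty(t);p_\infty)$, I would verify it is a $\kappa$-solution: completeness, the time interval $(-\infty,0]$, and the boundedness and nonnegativity of the curvature operator all pass to smooth limits; $\kappa$-noncollapsing is preserved under Cheeger--Gromov convergence; and $R(p_\infty,0)=\lim_k R(p_k,0)=1>0$ shows the limit is non-flat.

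The main obstacle is the a priori local curvature estimate (bounded curvature at bounded distance). This is the only step where dimension three is genuinely used and where Corollary \ref{ASR} enters in an essential way, through the point-picking and dimension-reduction contradiction; everything else is a fairly standard application of the Harnack inequality, $\kappa$-noncollapsing, and Hamilton's compactness theorem.
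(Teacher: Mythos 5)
Your overall skeleton coincides with the paper's: reduce everything to the a priori estimate that curvature is bounded at bounded distance at $t=0$, use the trace Harnack inequality $\partial_t R\ge 0$ to propagate the bound to all $t\le 0$, get injectivity radius control from $\kappa$-noncollapsing, apply Hamilton's compactness theorem, and check that the defining properties of a $\kappa$-solution pass to the limit. The divergence --- and the gap --- is in how you prove the key estimate. You propose point-picking at high-curvature points followed by ``Hamilton's dimension-reduction machinery,'' concluding that the rescaled limit is a round shrinking cylinder ``whose rigidity clashes with the way the high-curvature points were chosen.'' That last clause is the entire content of the contradiction, and it is never supplied. Obtaining a noncompact pointed limit, or even one modeled on $\mathbb{R}\times S^{2}$, is not by itself contradictory: such limits genuinely arise by rescaling $\kappa$-solutions (for instance at the ends of the Bryant soliton), so you must identify exactly which quantitative feature of your point selection is violated, and the sketch does not. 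In addition, the line-splitting step needs two antipodal directions going to infinity in the rescaled metric, which requires a more careful choice of the point-picked center than ``nearly maximal curvature at bounded distance.''

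The paper closes this gap through volume rather than through dimension reduction. Corollary \ref{ASR} is used only to locate a point $q$ with $\rho_{0}(p,q)^{2}R(q,0)=1$; Lemma \ref{lem-local bound} and Lemma \ref{lem-curvature bound} then give a uniform bound $R\le \delta^{-2}$ on $B(p_{k},\delta,0)$ for a universal $\delta$, whence $\kappa$-noncollapsing yields ${\rm vol}(B(p_{k},\delta,0))\ge\kappa\delta^{n}$ and Bishop--Gromov propagates a volume-ratio lower bound to every radius. Lemma \ref{cor of Volume} converts that volume-ratio bound into the curvature bound at bounded distance, and it is inside the proof of that lemma that the actual contradiction lives: the point-picked rescaled limit would have maximal volume growth, contradicting Proposition \ref{Volume} (vanishing asymptotic volume ratio of nonflat ancient solutions). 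If you wish to keep the cylinder-based route you would have to import the additional argument from Perelman's \S 12.1 (a small neck in a nonnegatively curved three-manifold cuts off a region of comparably small diameter, which cannot contain the basepoint where $R=1$); as written, the proposal stops one step short of a proof of the crucial estimate.
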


To generalize Theorem \ref{Perelman's-compactness-theorem} to higher dimensional $\kappa$ (K\"ahler) solutions, we introduce

\begin{defi}\label{def-pesudo-solution}
We call a $\kappa$-noncollapsed Ricci flow $(M, g(t))$ pseudo $\kappa$ (K\"ahler) solution if it is defined on $ M\times (-\infty,0]$ with nonnegative curvature operator (nonnegative bisectional curvature)
such that the following Harnack inequality holds along the flow:
\begin{equation}\label{eq:Harnack-real}
\frac{\partial R}{\partial t}+2\nabla_{i}RV^{i}+2R_{ij}V^{i}V^{j}\geq0,~\forall~V\in TM
\end{equation}
or in K\"ahler case,
\begin{equation}\label{eq:Harnack-complex}
\frac{\partial R}{\partial t}+\nabla_{i}RV^{i}+\nabla_{\bar{i}}RV^{\bar{i}}+R_{i\bar{j}}V^{i}V^{\bar{j}}\geq0,~\forall~V\in T^{(1,0)}M.
\end{equation}

\end{defi}

(\ref{eq:Harnack-real}) or (\ref{eq:Harnack-complex}) implies the Harnack inequality (cf. \cite{H}, \cite{Ca1}),
\begin{align}\label{harnack-r}\frac{R(x_{2},t_{2})}{R(x_{1},t_{1})}\geq e^{-\frac{d^{2}_{t_{1}}(x_{1},x_{2})}{2(t_{2}-t_{1})}},~\forall~t_1\le t_2.
\end{align}
In this section, we prove

\begin{theo}\label{compactness-theorem}
Let $(M_{k},g_{k}(t); p_{k})$ be a sequence of $n$-dimensional $\kappa$ (K\"ahler) solution on a noncompact manifold with $R(p_{k},0)=1$. Then $(M_k,g_{k}(t);p_{k})$ subsequently converge to a pseudo $\kappa$ (K\"ahler) solution of Ricci flow.
\end{theo}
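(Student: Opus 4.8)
The plan is to reduce the statement to Hamilton's compactness theorem for Ricci flows, whose hypotheses are uniform local curvature bounds (uniform in $k$, on every metric ball and every compact time interval) together with a lower bound on the injectivity radius at the basepoints. Since each $(M_k, g_k(t))$ has nonnegative curvature operator (resp. nonnegative bisectional curvature), one has the pointwise comparison $|{\rm Rm}| \le C(n) R$, so it suffices to bound the scalar curvature $R$ rather than the full curvature tensor. Moreover, the trace form of the Harnack inequality (take $V = 0$ in (\ref{eq:Harnack-real}) or (\ref{eq:Harnack-complex})) gives $\partial R/\partial t \ge 0$, so $R(x, t) \le R(x, 0)$ for all $t \le 0$; since ${\rm Ric} \ge 0$ makes the metric non-increasing in $t$, a ball $B(p_k, A, t)$ is contained in $B(p_k, A, 0)$ for $t \le 0$, and hence a bound for $R(\cdot, 0)$ on the latter already yields a space-time bound on the relevant parabolic regions. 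Thus everything reduces to bounding $R(\cdot, 0)$ on the balls $B(p_k, A, 0)$.

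The heart of the argument is this uniform spatial estimate at $t = 0$: there should exist a function $\omega = \omega_{\kappa, n}$ such that every $\kappa$ (K\"ahler) solution satisfies $R(x, 0) \le R(p_k, 0)\, \omega\big(R(p_k, 0)\, d_0(p_k, x)^2\big) = \omega\big(d_0(p_k, x)^2\big)$, using the normalization $R(p_k, 0) = 1$. Granting this, $R(\cdot, 0) \le \omega(A^2)$ on $B(p_k, A, 0)$ uniformly in $k$, which is exactly the local curvature bound Hamilton's theorem requires. I would prove the estimate by contradiction and Perelman's point-picking: if it fails, there are solutions and points $x_k$ with $d_0(p_k, x_k)^2 R(p_k,0)$ bounded but $R(x_k, 0) \to \infty$; selecting points of almost-maximal curvature, rescaling by that curvature (a scale-invariant operation preserving both $\kappa$-noncollapsing and nonnegativity of the curvature operator), and using the Harnack inequality to control the geometry along a minimizing geodesic from $p_k$ to $x_k$, one extracts a limit in which a point at finite distance carries infinite curvature, contradicting smoothness. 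Here the vanishing of the asymptotic volume ratio (Proposition \ref{Volume}) and the infinitude of the asymptotic scalar curvature ratio (Corollary \ref{ASR}) enter to exclude the degenerate or collapsed blow-up profiles; they play the role that Hamilton--Ivey pinching and the classification of three-dimensional shrinking solitons played in Perelman's Theorem \ref{Perelman's-compactness-theorem}.

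With the curvature bounds in hand, the $\kappa$-noncollapsing assumption converts them into a uniform lower bound on the injectivity radius at $p_k$ (via Cheeger--Gromov--Taylor), so Hamilton's compactness theorem applies and produces a subsequence converging in the pointed Cheeger--Gromov topology to a complete pointed flow $(M_\infty, g_\infty(t); p_\infty)$ on $(-\infty, 0]$ with $R(p_\infty, 0) = 1$. It then remains to verify that $g_\infty(t)$ is a \emph{pseudo} $\kappa$ (K\"ahler) solution in the sense of Definition \ref{def-pesudo-solution}: $\kappa$-noncollapsing, nonnegativity of the curvature operator (resp. bisectional curvature), and the differential Harnack inequality (\ref{eq:Harnack-real}) (resp. (\ref{eq:Harnack-complex})) are all closed conditions under smooth Cheeger--Gromov convergence, so they descend to the limit. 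I do not expect to recover a genuine $\kappa$-solution, because the bound $\omega(A^2)$ need not stay finite as $A \to \infty$; thus $g_\infty$ may carry unbounded curvature on a fixed time-slice, and only the weaker pseudo-solution structure is guaranteed to survive.

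The step I expect to be the main obstacle is the uniform curvature estimate on balls in the second paragraph. In three dimensions the point-picking contradiction could be closed using dimension-specific structure, which is unavailable in higher dimensions and in the K\"ahler category, so the contradiction must be driven purely by $\kappa$-noncollapsing, the Harnack inequality, and the two asymptotic facts above. The delicate issue is that, lacking an a priori \emph{global} curvature bound, the rescaled sequence produced inside the point-picking argument may itself fail to have bounded curvature at spatial infinity; the limit therefore has to be taken along the geodesic and the blow-up localized with care in order to extract the contradiction.
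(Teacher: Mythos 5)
Your proposal follows essentially the same route as the paper's proof: the core in both is a uniform ``bounded curvature at bounded distance'' estimate at $t=0$, obtained by point-picking and rescaling and closed off by the vanishing asymptotic volume ratio (Proposition \ref{Volume}) and the infinite asymptotic curvature ratio (Corollary \ref{ASR}), after which the trace Harnack inequality propagates the bound to $t\le 0$, Hamilton's compactness theorem applies, and the Harnack inequality, $\kappa$-noncollapsing and curvature nonnegativity pass to the limit to yield exactly a pseudo $\kappa$-solution (not a genuine one, for the reason you give). The paper merely packages your single estimate $R(\cdot,0)\le\omega(d_0(p_k,\cdot)^2)$ as a chain of lemmas --- a curvature bound on a ball of definite radius $\delta$ about $p_k$ (Lemma \ref{lem-curvature bound}), which noncollapsing converts into a volume lower bound, propagated outward by Bishop--Gromov and converted back into curvature bounds via Lemma \ref{cor of Volume} --- but the ingredients and the logic coincide with yours.
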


 It was mentioned by Morgan and  Tian that Perelman's argument still works for higher dimensional $\kappa$-solutions \cite[p. 222]{MT} (also see \cite[Theorem 20.9]{N}).
In the following,  we outline a  proof of  Theorem \ref{compactness-theorem}  from one of Theorem 9.64 in \cite{MT} for Perelman's Theorem \ref {Perelman's-compactness-theorem} with several technical lemmas, some of which   will be  used in Section 4, 5 (also see Proposition \ref{estimate-linear-decay}).  First we need an elementary lemma (cf. \cite{MT}).

\begin{lem}\label{lem-constructing-sequence}
Let $(M,g)$ be a Riemannian manifold and $p\in M$. Let $f$ a continuous and bounded function defined on $B(p,2r)\rightarrow\mathbb{R}$ with $f(p)>0$. Then there is a point $q\in B(p,2r)$ such that $f(q)\geq f(p)$, $d(p,q)\leq2r(1-\alpha)$ and $f(q^{\prime})<2f(q)$ for all $q^{\prime}\in B(q,\alpha r)$, where $\alpha=f(p)/f(q)$.
\end{lem}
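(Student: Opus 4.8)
The plan is to prove this by an iterative point-selection (``point-picking'') argument. I would construct a finite sequence $q_0 = p, q_1, \dots, q_N$ in $B(p,2r)$ and take $q = q_N$. At each stage set $\alpha_i = f(p)/f(q_i)$; if there exists a point $q' \in B(q_i, \alpha_i r)$ with $f(q') \geq 2 f(q_i)$, choose one such point and call it $q_{i+1}$, and otherwise halt. Since $f(q_{i+1}) \geq 2 f(q_i)$ at every step, one has $f(q_i) \geq 2^i f(p)$, so because $f$ is bounded on $B(p,2r)$ the process must terminate after finitely many steps; this is precisely where boundedness enters. At the terminal index $N$ no such $q'$ exists, which gives conclusion (3), namely $f(q') < 2 f(q_N) = 2 f(q)$ for all $q' \in B(q_N, \alpha_N r) = B(q, \alpha r)$. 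Conclusion (1) is immediate since $f(q_i)$ is strictly increasing with $f(q_0) = f(p)$.

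The remaining, and only quantitatively delicate, point is the distance estimate (2). Writing $a_i = f(q_i)/f(p)$, so that $a_0 = 1$ and $a_{i+1} \geq 2 a_i$, the construction gives $d(q_i, q_{i+1}) \leq \alpha_i r = r/a_i$, whence $d(p, q) \leq r \sum_{i=0}^{N-1} 1/a_i$. The trick is to telescope: the doubling relation $a_{i+1} \geq 2 a_i$ is exactly equivalent to $1/a_i \leq 2/a_i - 2/a_{i+1}$, so the sum collapses to $\sum_{i=0}^{N-1}(2/a_i - 2/a_{i+1}) = 2/a_0 - 2/a_N = 2 - 2\alpha$, yielding $d(p,q) \leq 2r(1-\alpha)$ as claimed. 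I would present this telescoping identity as the heart of the argument; everything else is bookkeeping.

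Finally I would verify that the construction never leaves the domain $B(p,2r)$, so that the values of $f$ remain defined and the boundedness hypothesis genuinely applies at each stage. The same telescoping bound shows $d(p, q_k) \leq r(2 - 2/a_k)$, and then for any $q' \in B(q_k, \alpha_k r)$ one estimates $d(p,q') \leq d(p,q_k) + r/a_k \leq 2r - r/a_k < 2r$, so every point inspected during the selection lies strictly inside $B(p,2r)$. I anticipate the main obstacle to be not any single hard step but organizing these estimates in the right order, so that the sharp constant $2r(1-\alpha)$ emerges from the telescoping and so that the inductive containment in $B(p,2r)$ is established before boundedness of $f$ is invoked to terminate the process.
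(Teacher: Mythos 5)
Your proof is correct and is exactly the standard point-picking argument that the paper itself does not reproduce but delegates to the cited reference [MT]; the iterative selection, the termination via boundedness, the telescoping bound $\sum 1/a_i \leq 2 - 2/a_N$ giving the sharp constant $2r(1-\alpha)$, and the check that all inspected points stay in $B(p,2r)$ are all handled properly. No gaps.
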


By Proposition \ref{Volume} and Lemma \ref{lem-constructing-sequence}, we prove

\begin{lem}\label{cor of Volume}
Let $(M_{k},g_{k}(t), p_k)$ be a sequence of n-dimensional ancient solutions of flow (\ref{ricci-equation}). Let $\nu>0$. Suppose
that there are $p_{k}\in M_{k}$ and $r_{k}>0$ such that ${\rm vol}(B(p_{k},r_{k}, 0))\geq \nu r_{k}^{2n}$. Then there is a $C(\nu)$ independent of $k$ such that $r^{2}_{k}R(q,0)\leq C(\nu)$ for all $q\in B(p_{k},r_{k}, 0)$.
\end{lem}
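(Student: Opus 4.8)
The plan is to argue by contradiction, producing a blow-up limit that is a complete, noncompact, nonflat ancient solution with strictly positive asymptotic volume ratio, in direct conflict with Proposition \ref{Volume}.

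First I would normalize. Replacing $g_k(t)$ by the parabolically rescaled flow $r_k^{-2}g_k(r_k^2 t)$, which is again an ancient solution, I may assume $r_k=1$; the hypothesis becomes ${\rm vol}(B(p_k,1,0))\ge \nu$ and the goal becomes $R(q,0)\le C(\nu)$ for all $q\in B(p_k,1,0)$. Since the curvature operator is nonnegative, so is the Ricci curvature, and Bishop--Gromov volume comparison at $t=0$ shows that ${\rm vol}(B(p_k,s,0))/s^{2n}$ is nonincreasing in $s$; hence ${\rm vol}(B(p_k,s,0))\ge \nu s^{2n}$ for all $0<s\le 1$. Suppose the conclusion fails. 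Then, after passing to a subsequence, there are $q_k\in B(p_k,1,0)$ with $Q_k:=R(q_k,0)\to\infty$.

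Next I would run the point-selection. Applying Lemma \ref{lem-constructing-sequence} to $f=R(\cdot,0)$ (iterated, or equivalently by maximizing the continuous bounded function $R(\cdot,0)\,d(\cdot,\partial B(p_k,2,0))^2$ on the ancient slice), I obtain points $x_k$ with $H_k:=R(x_k,0)\ge Q_k\to\infty$ and radii $\rho_k$ satisfying $\sqrt{H_k}\,\rho_k\to\infty$ such that $R(\cdot,0)\le 4H_k$ on $B(x_k,\rho_k,0)$, the $x_k$ remaining within a fixed distance of $p_k$. Thus $B(p_k,1,0)\subset B(x_k,C_0,0)$ with $C_0$ fixed, and Bishop--Gromov transfers the volume bound to ${\rm vol}(B(x_k,s,0))\ge \nu' s^{2n}$ for $s\le C_0$, with $\nu'=\nu'(\nu,n)$. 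I then rescale by curvature, setting $\bar g_k(t)=H_k\,g_k(H_k^{-1}t)$ based at $x_k$: these are ancient solutions with $\bar R_k(x_k,0)=1$, with $\bar R_k\le 4$ (hence $|{\rm Rm}|\le C(n)$, using nonnegativity of the curvature operator, resp.\ bisectional curvature) on balls $B_{\bar g_k}(x_k,\sqrt{H_k}\rho_k)$ of radii tending to $\infty$, and $\nu'$-noncollapsed at all scales up to $\sim\sqrt{H_k}\to\infty$. For backward-in-time control I would invoke Hamilton's trace Harnack (resp.\ Cao's K\"ahler version), which gives $\partial_t R\ge0$, so $R(\cdot,t)\le R(\cdot,0)$ for $t\le0$ and the spacetime curvature stays bounded by $4H_k$ on the relevant parabolic regions.

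Hamilton--Cheeger--Gromov compactness then yields a subsequential pointed limit $(M_\infty,g_\infty(t),x_\infty)$, a complete ancient solution with nonnegative curvature operator (resp.\ bisectional curvature), with $R_\infty(x_\infty,0)=1$ and hence nonflat. The uniform lower volume bound persists at every scale in the limit, giving ${\rm vol}_{g_\infty(0)}(B(x_\infty,s))\ge \nu' s^{2n}$ for all $s>0$, so $\mathcal{V}(M_\infty,g_\infty(0))\ge\nu'>0$; in particular $M_\infty$ is noncompact. This contradicts Proposition \ref{Volume}, which forces $\mathcal{V}=0$ for any noncompact nonflat ancient (K\"ahler) solution. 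The contradiction proves the lemma, and unwinding the normalization produces the explicit constant $C(\nu)$.

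The main obstacle is the point-selection: a single application of Lemma \ref{lem-constructing-sequence} controls $R$ only on a ball whose radius, after rescaling by $R(x_k,0)$, need not grow, so one must arrange curvature-controlled balls whose \emph{rescaled} radii tend to infinity (by iterating the selection, or via the $R\,d_\partial^2$ device) while keeping the basepoints at bounded distance from $p_k$ so that the hypothesized volume bound genuinely transfers. Once that is in place, the remaining ingredients — uniform noncollapsing of the rescaled metrics, backward curvature control from the Harnack inequality, and survival of the positive volume ratio in the limit — are routine, and the final contradiction with Proposition \ref{Volume} is immediate.
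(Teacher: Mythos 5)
Your argument is correct and follows essentially the same route as the paper: contradiction, point selection via Lemma \ref{lem-constructing-sequence}, transfer of the volume lower bound by Bishop--Gromov, curvature rescaling with backward control from the Harnack inequality, Hamilton's compactness, and a limit with positive asymptotic volume ratio contradicting Proposition \ref{Volume}. The ``main obstacle'' you flag dissolves in the paper's version because the selection lemma is applied to $f=\sqrt{R(\cdot,0)}$ rather than to $R(\cdot,0)$ itself: the controlled ball then has radius $s_k=r_k\sqrt{R(q_k,0)/R(q_k',0)}$, so its curvature-rescaled radius is $s_k\sqrt{R(q_k',0)}=r_k\sqrt{R(q_k,0)}\to\infty$ automatically, and a single application suffices without iteration or the $R\,d_\partial^2$ device.
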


\begin{proof}
We argue by contradiction. Then there is a sequence of points $q_{k}\in B(p_{k},r_{k}, 0)$ such that $r_{k}^{2}R(q_{k},0)\rightarrow\infty$ as $k\rightarrow\infty$. Let $f(x,t)=\sqrt{R(x,t)}$.
Applying Lemma \ref{lem-constructing-sequence} to $f(x,0)$ defined on $B(q_k, 2r_k, 0)$, we see that there are $q_{k}^{\prime}\in B(q_{k},2r_{k}, 0)$ such that $R(q_{k}^{\prime},0)\geq R(q_{k},0)$ and $R(q,0)\leq 4R(q_{k}^{\prime},0)$ for all $q\in B(q_{k}^{\prime},s_{k}, 0)$ with $s_{k}=r_{k}\sqrt{R(q_{k},0)/R(q_{k}^{\prime},0)}$. Since
$\frac{\partial R}{\partial t}\geq0$
by the Harnack inequality (\ref{eq:Harnack-real}) (or (\ref{eq:Harnack-complex})), we get
\begin{equation}\label{eq:3-1}
R(q,t)\leq 4R(q_{k}^{\prime},0), ~ \forall ~t\leq 0, q\in B(q_{k}^{\prime},s_{k}, 0).
\end{equation}
On the the hand, by the relation
$$\rho_{0}(p_{k},q_{k}^{\prime})\leq \rho_{0}(p_{k},q_{k})+\rho_{0}(q_{k},q_{k}^{\prime})<3r_{k},$$
where $\rho_0(p_k,q_k)$ is a distance function between two points $p_k,q_k$ in $M_k$ with respect to $g_k(0)$, we have
\begin{align} {\rm vol}(B(q_{k}^{\prime},4r_{k}, 0))\geq {\rm vol}(B(p_{k},r_{k}, 0)\geq(\nu/4^{2n})(4 r_{k})^{2n}.\notag
\end{align}
It follows from the Bishop-Gromov volume comparison theorem,
\begin{equation}\label{eq:3-3}{\rm vol}(B(q_{k}^{\prime},s, 0))\geq(\nu/4^{2n})s^{2n},~ \forall~ s\leq s_{k}\leq 3r_{k}.
\end{equation}

Now we consider the rescaled flows $(M_{k},Q_{k}g(Q_{k}^{-1}t);  q_{k}^{\prime})$ with $Q_{k}=R(q_{k}^{\prime},$ $ 0)$. By $(\ref{eq:3-1})$ and $(\ref{eq:3-3})$, we see
that the flows are all $(\nu/4^{2n})$-noncollapsed with the scalar curvature bounded by $4$ on the geodesic balls of radii $s_{k}\sqrt{Q_{k}}$ centered at $q_{k}^{\prime}$.
Since $s_{k}\sqrt{Q_{k}} =r_k\sqrt {R(q_k, 0)}\rightarrow\infty$ as $k\rightarrow\infty$, by the Hamilton's compactness theorem \cite{H4},
$(M_{k},Q_{k}g(Q_{k}^{-1}t);  q_{k}^{\prime})$ converge subsequently to an ancient solution $(M_{\infty},g_{\infty}(t))$. Note that ($\ref{eq:3-3}$) implies the limit $(M_{\infty},g_{\infty}(0))$ has the maximal volume growth.
This is a contradiction  with  Proposition \ref{Volume}.
\end{proof}

\begin{lem}\label{lem-local bound}Let $(M,g(t), p)$ be an n-dimensional $\kappa$-solution of Ricci flow.
Suppose that there exists a point $q\in (M,g(0))$ such that
\begin{align}\label{assumption-r}\rho_{0}(p,q)^{2}R(q,0)=1.\end{align}
Then, there is a uniform constant $C>0$ independent of $g(t)$  such that $R(x,0)/R(q, 0)\leq C$ for all $x\in B(q,2d, 0)$, where $d=\rho_{0}(p,q)$.
\end{lem}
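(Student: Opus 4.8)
The plan is to deduce the pointwise curvature bound from a lower volume bound through Lemma \ref{cor of Volume}, and to secure that volume bound from $\kappa$-noncollapsing. First observe that both the normalization (\ref{assumption-r}) and the ratio $R(x,0)/R(q,0)$ are invariant under the parabolic rescaling $g(t)\mapsto \lambda g(\lambda^{-1}t)$; rescaling each solution I may therefore assume $R(q,0)=1$, so that $d=\rho_0(p,q)=1$ and the assertion becomes: there is a uniform $C$ with $R(\cdot,0)\le C$ on $B(q,2,0)$.

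For the reduction, suppose I can produce a uniform $\nu>0$ with ${\rm vol}(B(q,2,0))\ge \nu\,2^{2n}$. Then Lemma \ref{cor of Volume}, applied with base point $q$ and radius $2$, yields $2^2R(x,0)\le C(\nu)$ for every $x\in B(q,2,0)$, and since $R(q,0)=1$ this is exactly $R(x,0)/R(q,0)\le C(\nu)/4=:C$. Thus it suffices to bound ${\rm vol}(B(q,2,0))$ from below by a universal constant. In turn, by the very definition of $\kappa$-noncollapsing, it is enough to exhibit a uniform scale $r_0=r_0(n,\kappa)\in(0,1]$ on which $|{\rm Rm}|\le r_0^{-2}$ throughout $B(q,r_0,0)$ (the backward-in-time requirement following from the monotonicity $\partial R/\partial t\ge0$, the bound $|{\rm Rm}|\le C(n)R$ for nonnegative curvature operator, and the $C_0$-version of noncollapsing noted in the definition): then ${\rm vol}(B(q,r_0,0))\ge\kappa r_0^{\,n}$, and since $B(q,r_0,0)\subset B(q,2,0)$ the desired bound holds with $\nu=\kappa r_0^{\,n}2^{-2n}$.

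To establish such a uniform scale $r_0$ I would argue by contradiction along the lines of Lemma \ref{cor of Volume}. If no uniform $C$ exists, there is a sequence of $\kappa$-solutions with $R(q_k,0)=1$ but $H_k:=\sup_{B(q_k,2,0)}R(\cdot,0)\to\infty$. Choosing $x_k\in B(q_k,2,0)$ with $R(x_k,0)=H_k$ and applying the point-selection Lemma \ref{lem-constructing-sequence} to $f=\sqrt{R(\cdot,0)}$ on $B(x_k,1,0)$ produces points $z_k$ with $Q_k:=R(z_k,0)\ge H_k\to\infty$, with $R(\cdot,0)\le 4Q_k$ on $B(z_k,\sigma_k,0)$ and $\sigma_k\sqrt{Q_k}\to\infty$, and with $\rho_0(z_k,q_k)\le 3$. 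Using $\partial R/\partial t\ge 0$ (from (\ref{eq:Harnack-real})) to propagate the curvature bound to all $t\le0$ and rescaling by $Q_k$, Hamilton's compactness theorem \cite{H4} extracts a complete ancient limit $(M_\infty,g_\infty(t),z_\infty)$ with $R_{g_\infty}(z_\infty,0)=1$, exactly as in the proof of Lemma \ref{cor of Volume}.

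The step I expect to be the main obstacle is converting this blow-up into a contradiction with Proposition \ref{Volume}. The difficulty is the usual circularity between curvature and volume bounds: $\kappa$-noncollapsing returns a lower volume bound only at scales on which the curvature is already controlled — here the scale $Q_k^{-1/2}$ at the high-curvature centre $z_k$ — and such a bound merely records the noncollapsedness of the limit, not the \emph{maximal} volume growth ${\rm vol}(B(\cdot,s))\ge c\,s^{2n}$ that would contradict $\mathcal V(M_\infty,g_\infty(0))=0$. Unlike in Lemma \ref{cor of Volume}, no large-scale volume bound is handed to us at the balance point $q$, so the contradiction must instead be forced by ruling out curvature concentrating on balls of radius $\to0$ about $q_k$ while $R(q_k,0)=1$. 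Resolving this is precisely where the global structure of $\kappa$-solutions — Proposition \ref{Volume} together with Corollary \ref{ASR} — must enter, rather than the Harnack inequality alone, and it is the crux underpinning the compactness Theorem \ref{compactness-theorem}.
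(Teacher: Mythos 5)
Your reduction is sound as far as it goes: normalizing $R(q,0)=1$, $d=1$, it does suffice to produce a uniform lower bound ${\rm vol}(B(q,2,0))\ge\nu\,2^{2n}$ and then invoke Lemma \ref{cor of Volume}. But the proof stalls exactly where you say it does, and this is a genuine gap, not merely a technical loose end. Your blow-up at the curvature scale $Q_k^{-1/2}$ around the point-picked points $z_k$ produces, after Hamilton compactness, just another $\kappa$-solution with $R(z_\infty,0)=1$; since $\rho_0(z_k,q_k)\le 3$ becomes $\le 3\sqrt{Q_k}\to\infty$ in the rescaled metric, the normalization $R(q_k,0)=1$ carries no information to the limit, and nothing contradicts Proposition \ref{Volume}. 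The mechanism that works in Lemma \ref{cor of Volume} — transporting an assumed volume lower bound via Bishop--Gromov out to scales $s_k\sqrt{Q_k}\to\infty$ to force maximal volume growth in the limit — is unavailable here precisely because no volume bound is assumed, which is the circularity you identified.

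The missing idea in the paper's proof is to blow up at a \emph{volume} scale rather than a curvature scale. Assuming the lemma fails, the contrapositive of Lemma \ref{cor of Volume} (plus a diagonal argument) forces ${\rm vol}(B(q_k,2d_k,0))/(2d_k)^{2n}\to 0$; by Bishop--Gromov monotonicity there is then a radius $r_k<2d_k$ at which the volume ratio equals exactly $\omega_{2n}/2$, and necessarily $r_k/d_k\to 0$. Rescaling by $r_k^{-2}$ and basing at $q_k$, the half-Euclidean ratio at radius $1$ gives (again via Bishop--Gromov and Lemma \ref{cor of Volume}) uniform curvature bounds on all balls $B(q_k,1+A)$, so Hamilton compactness applies; and the normalization (\ref{assumption-r}) yields $R(q_\infty,g_\infty(0))=\lim_k r_k^2 R(q_k,0)=\lim_k r_k^2/d_k^2=0$, whence the limit is flat by the strong maximum principle, and in fact Euclidean space by the $\kappa$-noncollapsing/injectivity-radius argument. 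The contradiction is then that ${\rm vol}(B(q_\infty,1))$ must equal both $\omega_{2n}$ and $\omega_{2n}/2$. Note that this is where the hypothesis $\rho_0(p,q)^2R(q,0)=1$ is actually used — to kill the curvature of the blow-up limit at the basepoint — whereas in your argument it never enters. Without some device of this kind your uniform scale $r_0$ cannot be produced, so the proposal as written does not close.
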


\begin{proof}
Suppose that the lemma is not true. Then there is a sequence of $\kappa$-solutions $(M_{k},g_{k}(t);  p_{k})$ with points $q_{k}^{\prime}\in B(q_{k},2d_{k}, 0)$ such that $$\lim_{k\rightarrow\infty}(2d_{k})^{2}R(q_{k}^{\prime},0)=\infty,$$
where $d_{k}=\rho_{0}(p_{k},q_{k})$ and $\rho_{0}(p_{k},q_{k})^{2}R(q,0)=1$. By Lemma \ref{cor of Volume}, it is easy to see that for any $\nu>0$, there is an $N(\nu)$ such that
$${\rm vol}(B(q_{k},2d_{k}, 0))<\nu(2d_{k})^{2n}, ~\forall ~k>N(\nu).$$
Hence, by taking the diamond method, we may assume that
\begin{equation}\label{eq:1}
\lim_{k\rightarrow\infty}{\rm vol}(B(q_{k},2d_{k}, 0))/(2d_{k})^{2n}=0.
\end{equation}
In particular,
$${\rm vol}(B(q_{k},2d_{k}, 0))<(\omega_{2n}/2)(2d_{k})^{2n}, ~\forall ~k\ge k_0,$$
where $\omega_{2n}$ is the volume of unit ball in $\mathbb{R}^{2n}$.
Therefore, by the Bishop-Gromov volume comparison theorem, there exists a $r_{k}<2d_{k}$ such that
\begin{equation}\label{eq:2}
{\rm vol}(B(q_{k},r_{k}, 0))=(\omega_{2n}/2)r_{k}^{2n}.
\end{equation}
Note that by (\ref{eq:1}) and (\ref{eq:2}) we have $\lim_{k\rightarrow\infty}r_{k}/d_{k}=0$.

Next we consider a sequence of rescaled ancient flows $(M_{k},g^{\prime}_{k}(t); q_{k})$, where $g^{\prime}_{k}(t)=r_{k}^{-2}g_{k}(r_{k}^{2}t)$. Then by (\ref{eq:2}), we have
$${\rm vol}(B(q_{k},1+A, g^{\prime}_{k}(0)))\geq {\rm vol}(B(q_{k},1, g^{\prime}_{k}(0)))=\frac{\omega_{2n}}{2(1+A)^{2n}}(1+A)^{2n},$$
where $A>0$ is any fixed constant.
Applying Lemma \ref{cor of Volume} to the ball $B(q_{k},1+A; g^{\prime}_{k}(0))$,
there is a constant $K(A)$ independent of $k$ such that
$$(1+A)^{2}R(q, {g_{k}^{\prime}}(0))\leq K(A),
~\forall ~q\in B (q_{k},1+ A; g^{\prime}_{k}(0)).$$
Hence by the Harnack inequality, scalar curvature  of  $g^{\prime}_{k}(t)$  on $B_{g^{\prime}_{k}(0)}(q_{k},A, 0)\times(-\infty,0]$ is uniformly bounded by $K(A)$,  and so  is  its sectional curvature.
By the Hamilton's compactness theorem, $(M_{k},g^{\prime}_{k}(t); q_{k})$ converges to a limit flow $(M_{\infty},g_{\infty}(t); q_{\infty})$.
Note by (\ref{assumption-r}) that
$$R(q_{\infty}, g_{\infty}(0))=\lim_{k\rightarrow\infty}R(q_{k}, g_{k}'(0))=\lim_{k\rightarrow\infty}\frac{(r_{k})^{2}}{d_{k}^{2}}=0.$$
Therefore, the strong maximum principle implies that $(M_{\infty},g_{\infty}(t))$ is flat flow.

At last, we prove that $(M_{\infty},g_{\infty}(t))$ is isometric to the Euclidean space for any $t\le 0$. We need to consider at $t=0$. Fix any $r>0$. Obviously,
$$\sup_{x\in B(q_{\infty},r; g^{\prime}_{k}(0))}|{\rm Rm}(x)|=0\le \varepsilon.$$
where $\varepsilon$ can be chosen so that
$\frac{\pi}{\sqrt{\varepsilon}}>2r.$
Note that $(M_{\infty},g_{\infty}(t))$ is $\kappa$-noncollapsed for each $t\le 0$. Thus we have
$${\rm vol}(B (q_{\infty},r; g_{\infty}(0))\geq \kappa r^{n}.$$
By  an estimate of Cheeger-Taylor-Gromov for the injective radius  \cite{CGT}, it follows
$${\rm inj}(q_{\infty})\geq \frac{\pi}{2\sqrt{\varepsilon}}\frac{1}{1+\frac{\omega_{n}(r/4)^{n}}{{\rm vol}(B (q_{\infty},r/4; g_{\infty}(0) ))}}\geq \frac{\kappa}{\kappa+\omega_{n}}\cdot r.$$
Hence $B(q_{\infty},\frac{\kappa}{\kappa+\omega_{n}}\cdot r; g_{\infty}(0))$ is simply connected for all $r>0$. Therefore, $M_{\infty}$ is a simply connected, and consequently $g_\infty(t)$ are all isometric to the Euclidean metric.

The above implies that ${\rm vol}(B(q_{\infty},1; g_{\infty}(0)))=\omega_{n}$. On the other hand, by the convergence of $(M_{k},g_{k}(t);p_{k})$ and the relation (\ref{eq:2}), we get
$${\rm vol}(B (q_{\infty},1; g_{\infty}(0)))=\omega_{n}/2.$$
This is a contradiction. The lemma is proved.
\end{proof}

\begin{lem}\label{lem-curvature bound}Let $(M,g(t); p)$ be a $\kappa$-solution with $R(p,0)=1$.
Then there exists a $\delta>0$ independent of $g(t)$  such that $R(q,0)\leq \delta^{-2}$ for all $q\in B(p,\delta;0)$.
\end{lem}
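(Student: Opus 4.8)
The plan is to argue by contradiction and to convert the failure of a uniform bound into an application of the local curvature bound of Lemma \ref{lem-local bound}. Suppose the statement is false. Then for each $k$ there is a $\kappa$-solution $(M_k,g_k(t);p_k)$ with $R(p_k,0)=1$ together with a point $q_k$ satisfying $\rho_0(p_k,q_k)<1/k$ and $R(q_k,0)>k^2$; in particular $\rho_0(p_k,q_k)\to0$ while $R(q_k,0)\to\infty$. I would introduce the scale-invariant quantity $u_k(x)=\rho_0(p_k,x)^2R(x,0)$, which is continuous with $u_k(p_k)=0$, and whose values are unbounded since by Corollary \ref{ASR} the asymptotic scalar curvature $\mathcal{R}(M_k,g_k(0))=+\infty$. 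Hence $u_k$ attains the value $1$, and I let $y_k$ be a point with $u_k(y_k)=1$ that is nearest to $p_k$, setting $d_k:=\rho_0(p_k,y_k)>0$. Thus $\rho_0(p_k,y_k)^2R(y_k,0)=1$, i.e. $R(y_k,0)=d_k^{-2}$, which is exactly the normalization required to feed $(p_k,y_k)$ into Lemma \ref{lem-local bound}.

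Applying Lemma \ref{lem-local bound} then gives a uniform constant $C$ with $R(x,0)\le C\,R(y_k,0)=C d_k^{-2}$ for all $x\in B(y_k,2d_k;0)$. I claim this forces $d_k\to0$. Indeed, if $d_k\ge d_0>0$ along a subsequence, then for $k$ large $\rho_0(y_k,q_k)\le d_k+\rho_0(p_k,q_k)<2d_k$, so $q_k\in B(y_k,2d_k;0)$ and therefore $R(q_k,0)\le C d_0^{-2}$, contradicting $R(q_k,0)\to\infty$. Hence $d_k\to0$; this step is where the hypothesis that arbitrarily high curvature occurs arbitrarily close to the basepoint is used.

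Finally I would blow up at this scale. Set $\hat g_k(t)=d_k^{-2}g_k(d_k^2 t)$, based at $y_k$, so that $\hat R(y_k,0)=d_k^2R(y_k,0)=1$, while the previous step together with $\partial_tR\ge0$ (a consequence of the Harnack inequality (\ref{eq:Harnack-real}) or (\ref{eq:Harnack-complex})) bounds $\hat R\le C$ on $B_{\hat g_k}(y_k,2)\times(-\infty,0]$; by the nonnegativity of the curvature operator this also bounds the sectional curvature there. Combined with $\kappa$-noncollapsing and the injectivity radius estimate, Hamilton's compactness theorem \cite{H4} extracts a limiting ancient flow $(B_\infty,g_\infty(t);y_\infty)$ on a ball of radius close to $2$, with $R(y_\infty,0)=1$. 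But $p_k$ lies at $\hat g_k$-distance exactly $1$ from $y_k$ and $\hat R(p_k,0)=d_k^2R(p_k,0)=d_k^2\to0$, so the limit contains a point $p_\infty$ with $R(p_\infty,0)=0$. Since $R\ge0$ and $\partial_tR\ge\Delta R$, the strong maximum principle propagates the vanishing of $R$ across the entire time-$0$ slice, giving $R(y_\infty,0)=0$ and contradicting $R(y_\infty,0)=1$.

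The main obstacle I expect is the bookkeeping around the scale $d_k$: one must simultaneously produce the point $y_k$ at which Lemma \ref{lem-local bound} applies and verify $d_k\to0$, and one must check that the (only local, a priori incomplete) limit flow is regular enough for the parabolic strong maximum principle to carry the zero of $R$ at $p_\infty$ all the way to the center $y_\infty$. Everything else is routine rescaling and compactness of the kind already used in the proof of Lemma \ref{lem-local bound}.
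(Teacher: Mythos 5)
Your proof is correct, but it closes the argument by a genuinely different mechanism than the paper's. You and the paper share the same two inputs: Corollary \ref{ASR} to produce a point at unit curvature-scale distance from the basepoint, and Lemma \ref{lem-local bound} to control $R$ on the ball of radius $2d$ around it. From there the paper argues \emph{directly and quantitatively}: for the single point $q$ with $\rho_{0}(p,q)^{2}R(q,0)=1$ it proves the uniform bound $R(q,0)\leq C_{0}$ by combining (i) a distance-distortion estimate and the integrated Harnack inequality (\ref{harnack-r}), which compare $R(p,0)=1$ with $R(q,t_{c})$ at the time $t_{c}=-cR(q,0)^{-1}$ and yield $R(q,t_{c})\leq e^{C'/2c}$, with (ii) Shi's derivative estimates and the evolution equation for $R$, which give $|R(q,0)-R(q,t_{c})|\leq cC'R(q,0)$; choosing $c$ small then absorbs the last term and $\delta=\sqrt{(AC_{0})^{-1}}$ works. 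You instead argue by contradiction: you point-pick $y_{k}$ with $\rho_{0}(p_{k},y_{k})^{2}R(y_{k},0)=1$, use Lemma \ref{lem-local bound} on the sequence to force $d_{k}\to 0$, blow up at scale $d_{k}$, and let the strong maximum principle propagate the vanishing of $R$ at $p_{\infty}$ (where $\hat R(p_{k},0)=d_{k}^{2}\to 0$) to contradict $R(y_{\infty},0)=1$. This is in effect a re-run of the mechanism inside the paper's proof of Lemma \ref{lem-local bound}, executed at the scale $d_{k}$, and it is sound: the set $\{u_{k}=1\}$ is nonempty by Corollary \ref{ASR} and the intermediate value theorem, the Harnack inequality does give $\partial_{t}R\geq 0$ so the time-$0$ bound persists backward in time, and the parabolic strong maximum principle applies on the incomplete local limit. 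What the paper's route buys is an explicit, constructive bound with no appeal to a local (incomplete-ball) version of Hamilton's compactness theorem; what your route buys is brevity, since it avoids the Harnack integration, the distance-distortion computation, and Shi's higher-order estimates entirely.
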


\begin{proof}
By Corollary \ref{ASR}, there exists point $q\in M$ such that
\begin{align}\label{r-rho-squre} \rho_{0}(p,q)^{2}R(q,0)=1.
\end{align}
Applying Lemma \ref{lem-local bound}, we get
\begin{align}\label{harnack-relation-r} R(x,0)/R(q,0)\leq A,~\forall ~x\in B(q,2d, 0),
\end{align}
where $d= \rho_{0}(p,q)$. It suffices to prove that $R(q,0)\leq C_{0}$ for some $C_{0}>0$.

By  the Harnack inequality, we have
\begin{align}\label{r-on-ball} R(x,t)\leq  R(x,0), ~\forall ~x\in B(q,2d, 0).
\end{align}
Thus the Ricci curvature of $g(t)$ is uniformly bounded by $A R(q,0)$ on $B(q,$ $2d, 0)$  by ( \ref{harnack-relation-r}).
By the flow (\ref{ricci-equation}), it follows
\begin{equation*}
\frac{{\rm d}}{dt}L(t)\geq -AR(q,0)L(t),
\end{equation*}
where $L(t)$ is the length of $\gamma(s)$ with respect to $g(t)$ for any $t\leq 0$ and $\gamma(s)$ is a minimal geodesic connecting $p$ and $q$ with respect to $g(0)$.
Thus
$$ d_{t}(p,q)\le L(t)\leq e^{-AR(q,0)t}L(0)=e^{-AR(q,0)t}R(q,0)^{-1/2}.$$
Choose $t_{c}=-cR^{-1}(q,0)$ , where $0<c<1$ will be determined later.
By the Harnack inequality (\ref{harnack-r}),
$$\frac{R(p,0)}{R(q,t)}\geq e^{\frac{d^{2}_{t}(p,q)}{2t}},$$
we obtain
\begin{align}\label{t-curvature}R(q,t_{c})\leq \exp(e^{2cA}/2c)\leq e^{C^{\prime}/2c}.
\end{align}

Let $\widetilde{g}(t)=R(q,0)g(R(q,0)^{-1}t)$. By (\ref{r-on-ball}),
$$|\widetilde{R}(x,t)|\le A,~\forall ~x\in ~B(q,2d,0),~t\le 0.$$
Thus $|\widetilde{{\rm Rm}}(x,t)|\le A'$ for any $ x\in ~B(q,2d,0)$ and $t\le 0.$
Since the Ricci curvature is nonnegative,
$$\widetilde{B}(q,2,t)\subseteq \widetilde{B}(q,2,0)=B(q,2d,0),~\forall~t\le0.$$
By the Shi's higher order estimates for curvature tensors \cite{S1}, we have
$$|\widetilde{\Delta} \widetilde{R}|(x,t)\le C(A),~\forall x\in \widetilde{B}(q,1,-1)~,t\in(-1,0].$$
It follows
$$|\Delta R|(x,t)\le C R^2(q,0),~\forall~ x\in \widetilde{B}(q,1,-1), ~t\in(-R(q,0)^{-1},0].$$
Hence
\begin{align}\label{shi-estimate}
|\Delta R(q,t)|\le C R^2(q,0),~\forall ~t\in(-R(q,0)^{-1},0].
\end{align}

By (\ref{shi-estimate}) and  the equation
$$\frac{\partial R}{\partial t}=\triangle R+2|{\rm Ric}|^{2},$$
we  have
$$|\frac{\partial }{\partial t}R(q,t)|\leq C'R^{2}(q,0).$$
By (\ref{t-curvature}),  it follows
$$R(q,0)\leq R(q,t_{c})+ C'|t_{c}| R^{2}(q,0)^2\leq e^{C/2c}+cC'R(q,0).$$
Thus by choosing $c=\frac{1}{2}(C')^{-1}$, we derive
$$R(q,0)\le C_0.$$
Let $\delta=\sqrt{(AC_{0})^{-1}}$. Then the lemma follows from (\ref{r-rho-squre}) and (\ref{harnack-relation-r}) immediately.

\end{proof}

\begin{proof}[Proof of Theorem \ref{compactness-theorem}] By Lemma \ref{lem-curvature bound},
the $\kappa$-noncollapsed condition of $(M_{k},$ $g_{k}(t))$ implies
$${\rm vol}(B(p_{k},\delta, 0))\geq \kappa \delta^{2n},$$
where $\delta>0$ is a uniform number.
By the Bishop-Gromov volume comparison theorem, we have
$${\rm vol} (B(p_{k},\delta+r, 0))\geq {\rm vol} (B(p_{k},\delta, 0))\geq \frac{\kappa}{(1+(r/\delta))^{2n}}(\delta+r)^{2n},~\forall~r>0.$$
Applying Lemma \ref{cor of Volume} to each ball $B(p_{k},\delta+r, 0 )$, we see that there is a $C(r)$ independent of $k$ such that
$$R(q,0)\leq C(r)(r+\delta)^{-2},~\forall ~ q\in B(p_{k},\delta+r, 0).$$
By the Harnack inequality, we also get
$$R(q,t)\leq C(r)(r+\delta)^{-2},~\forall ~ q\in B(p_{k},\delta+r, 0).$$
As a consequence, ${\rm Rm}(q,t)\leq C'(r)(r+\delta)^{-2}$  for any $ q\in B(p_{k},\delta+r, 0).$
Hence, the Hamilton's compactness theorem implies that $(M_k,g_{k}(t); p_{k})$ subsequently converge to a limit Ricci flow $(M_\infty, g_\infty(t))$ with nonnegative curvature operator ( or nonnegative bisectional curvature) for any $t\le 0$.
Moreover,
$g_\infty(t)$ satisfies the Harnack inequality (\ref{eq:Harnack-real}) or (\ref{eq:Harnack-complex}) since $g_k(t)$ satisfies the corresponding Harnack inequality (cf. \cite{H}, \cite{Ca1}).

\end{proof}

By using the argument in the proof of Theorem  \ref{compactness-theorem},  we  have the following pointwisely  estiamte for the Lapalce of scalar curvature.

\begin{prop}\label{estimate-linear-decay}
Let $(M,g(t))$ be a $\kappa$-solution. Then there is a constant $C$ independent of $p,t$ such that
$$\frac{|\Delta R(p,t)|}{R^{2}(p,t)}\leq C, \mbox{\quad}\forall~ (p,t)\in M\times(-\infty,0].$$
\end{prop}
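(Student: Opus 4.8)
The plan is to prove Proposition \ref{estimate-linear-decay} by a rescaling-and-contradiction argument that mirrors the structure already established in Lemma \ref{lem-curvature bound}. Suppose the estimate fails: then there exists a sequence $(M_k, g_k(t); p_k)$ of $\kappa$-solutions and times $t_k \leq 0$ with
\begin{equation*}
\frac{|\Delta R(p_k, t_k)|}{R^2(p_k, t_k)} \to \infty.
\end{equation*}
Since the quantity $|\Delta R|/R^2$ is scale-invariant, I would first normalize: apply the time-translation $t \mapsto t + t_k$ so that the base time becomes $0$, and then rescale by $Q_k = R(p_k, 0)$, setting $\tilde g_k(t) = Q_k g_k(Q_k^{-1} t)$. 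Under this normalization $R(p_k, 0) = 1$ in the rescaled flow while $|\Delta R(p_k, 0)| \to \infty$, which is the contradiction I aim to expose.

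Next I would extract a convergent limit. By Lemma \ref{lem-curvature bound}, the normalization $R(p_k, 0) = 1$ forces a uniform lower curvature-control radius $\delta > 0$, so $R(q, 0) \leq \delta^{-2}$ on $B(p_k, \delta, 0)$; combined with the $\kappa$-noncollapsing this gives a volume lower bound ${\rm vol}(B(p_k, \delta, 0)) \geq \kappa \delta^{2n}$. Then, exactly as in the proof of Theorem \ref{compactness-theorem}, I would chain Lemma \ref{cor of Volume}, the Bishop-Gromov comparison, and the Harnack inequality to obtain uniform curvature bounds on parabolic neighborhoods $B(p_k, r, 0) \times (-\infty, 0]$ for every fixed $r$. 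Hamilton's compactness theorem then yields subsequential convergence to a $\kappa$-solution $(M_\infty, g_\infty(t); p_\infty)$ with $R(p_\infty, 0) = 1$, hence nonflat, so in particular $R(p_\infty, 0) \neq 0$.

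The heart of the argument is the passage from curvature convergence to convergence of $\Delta R$. Because the convergence is in the $C^\infty$ Cheeger-Gromov topology and $\Delta R$ is a second-order-in-curvature quantity, Shi's higher-order estimates (as invoked in the proof of Lemma \ref{lem-curvature bound} via the bound (\ref{shi-estimate})) guarantee that $\Delta R(p_k, 0) \to \Delta R(p_\infty, 0)$. But $\Delta R(p_\infty, 0)$ is a finite number, contradicting $|\Delta R(p_k, 0)| \to \infty$. I expect the main obstacle to be verifying rigorously that the smooth convergence does pass the Laplacian of scalar curvature to the limit, uniformly in the time slice; this is where one must use that Shi's estimates give bounds on $\nabla^2 {\rm Rm}$ on fixed parabolic neighborhoods, so that the $C^2$-norm of $R$ (and thus $\Delta R$) is controlled independently of $k$, allowing the Arzelà-Ascoli step inside Hamilton's compactness framework to carry through. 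The remaining steps are routine repetitions of estimates already assembled in this section.
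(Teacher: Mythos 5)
Your proposal is correct and follows essentially the same route as the paper: contradiction, rescaling by $R(p_k,t_k)$ with a time shift so that $R(p_k,0)=1$, uniform curvature bounds on a parabolic neighborhood of $p_k$ via the chain of lemmas from the proof of Theorem \ref{compactness-theorem}, and Shi's local derivative estimates to control $\Delta R$ at the basepoint. The only difference is that your final passage to a Cheeger--Gromov limit is redundant: once Shi's estimates give the uniform bound $|\Delta R(p_k, 0)|\le C''$ on the rescaled flows (which is exactly what makes your limit argument work), the contradiction with $|\Delta R(p_k,0)|\to\infty$ is already immediate, and this is where the paper stops.
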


\begin{proof}On the contrary,  we can find a sequence of $p_{i}$ and $t_{i}$ such that
\begin{equation}\label{eq:contradiction}
\lim_{i\rightarrow\infty}\frac{|\Delta R(p_{i},t_{i})|}{R^{2}(p_{i},t_{i})}=\infty.
\end{equation}
Consider a sequence of rescaled flows $(M,g_{i}(t); p_{i})$ with
$$g_{i}(t)=R(p_{i},t_{i})g(R^{-1}(p_{i},t_{i})t+t_{i}).$$
Then $R(p_i, g_i(0))=1$. As in the proof of Theorem \ref{compactness-theorem},
we see that there is a constant $C$ independent of $i$ such that
$$R(q, g_{i}(0))\leq C,~\forall ~q\in B(p_{i},1; g_{i}(0)).$$
Since   $\frac{\partial}{\partial t}R(q, g_{i}(t))\geq 0$ by  the Harnack inequality,
$$R(q, g_{i}(t))\leq C, ~\forall ~(q,t) \in B(p_{i},1; g_{i}(0))\times[-1,0].$$
As a consequence,
$$|{\rm Rm}(q, g_{i}(t))|
\le   C', ~\forall ~(q,t) \in B(p_{i},1; g_{i}(0))\times[-1,0].$$
On the other hand,  Ricci curvature  of  $g_{i}(t)$ is nonnegative along the flow, so   $g_{i}(t)$ is decreasing along the flow. Then
$$B(p_{i},1; g_{i}(-1))\subset B(p_{i},1; g_{i}(0)).$$
Hence
$$|{\rm Rm}(q, g_{i}(t))|\le C',~\forall~(q,t)\in B(p_{i},1; g_{i}(-1))\times[-1,0].$$
By the Shi's higher order estimate, we get 
$$|\Delta R(q, g_{i}(t))|\leq C'',~\forall ~q\in B(p_{i},\frac{1}{2}; g_{i}(-1))\times[-\frac{1}{2},0].$$
In particular,
$$\frac{|\Delta R(p_{i},t_{i})|}{R^{2}(p_{i},t_{i})}=|\Delta R (p_{i},g_{i}(0))|\leq C''.$$
This is a contradiction  with (\ref{eq:contradiction}).
\end{proof}

Proposition \ref{estimate-linear-decay} will be used in the proof of Theorem \ref{soliton-1} next section.

\section {Asymptotical geometry of solitons}

In this section, we use Theorem \ref{compactness-theorem} to prove Theorem \ref{soliton-1}. Let $\phi_{t}$ be a family of biholomorphisms generated by $-\nabla f$. Let $g(t)=\phi^{*}_{t}(g)$. Then $g(t)$
satisfies the Ricci flow (\ref{ricci-equation}).
In \cite{DZ}, the authors proved that there exists a unique equilibrium point $o$ such that $\nabla f(o)=0$ for a steady gradient K\"{a}hler-Ricci soliton with nonnegative bisectional curvature and positive
Ricci curvature. Thus for any $p\in M\setminus \{o\}$, it is easy to see that $\phi_{t}(p)$ converge to $o$ as $t\rightarrow\infty$. In the following,
we show that the growth order of $\rho(o, \phi_{t}(p))$ is actually equivalent to $|t|$ as $t\rightarrow-\infty$.

\begin{lem}\label{lem-equivalence}
Let $o$ be the equilibrium point as above. Then for any $p\in M\setminus \{o\}$, there exist constants $C_{1},C_{2}>0$
and $t_{0}\leq0$ such that
\begin{align}\label{distance-in-t} C_{1}|t|\leq \rho(o,\phi_{t}(p))\leq C_{2}|t|,~ \forall ~t\leq t_{0}.
\end{align}
\end{lem}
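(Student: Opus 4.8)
The plan is to track the potential $f$ and its gradient along the integral curve $\gamma(s):=\phi_{s}(p)$, which satisfies $\dot\gamma(s)=-\nabla f(\gamma(s))$ and, by \cite{DZ}, converges to $o$ as $s\to+\infty$. Two standard facts about a steady gradient soliton drive everything. First, the Hamilton identity $R+|\nabla f|^{2}\equiv C$ for a constant $C$; evaluating at $o$, where $\nabla f=0$, gives $C=R(o)$, and since $R\ge 0$ we obtain the global bound $|\nabla f|\le\sqrt{C}$ together with the fact that $R$ attains its maximum $C$ only at $o$. Second, the soliton equation $\nabla_{i}\nabla_{j}f=R_{ij}$ yields $\nabla_{i}|\nabla f|^{2}=2R_{ij}\nabla^{j}f$, so along $\gamma$
\[
\frac{d}{ds}|\nabla f|^{2}(\gamma(s))=\langle\nabla|\nabla f|^{2},-\nabla f\rangle=-2\,{\rm Ric}(\nabla f,\nabla f)(\gamma(s))\le 0 ,
\]
using ${\rm Ric}\ge 0$. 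Hence $|\nabla f|^{2}$ is nonincreasing in $s$, and for all $s\le 0$ one has $|\nabla f|^{2}(\gamma(s))\ge|\nabla f(p)|^{2}=:a_{0}>0$, the positivity coming from $p\neq o$.

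For the upper bound I would estimate $\rho(o,\phi_{t}(p))$ by the length of $\gamma$ joining $\phi_{t}(p)$ to $p$, plus $\rho(o,p)$. Since the speed is $|\dot\gamma(s)|=|\nabla f(\gamma(s))|\le\sqrt{C}$,
\[
\rho(o,\phi_{t}(p))\le\rho(o,p)+\int_{t}^{0}|\nabla f(\gamma(s))|\,ds\le\rho(o,p)+\sqrt{C}\,|t| ,
\]
which is bounded by $C_{2}|t|$ once $|t|$ is large enough; this uses only $|\nabla f|\le\sqrt{C}$ and the triangle inequality.

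For the lower bound I would convert the distance into a statement about $f$. Integrating $f$ along a minimal geodesic from $o$ to $\phi_{t}(p)$ and using $|\nabla f|\le\sqrt{C}$ gives $f(\phi_{t}(p))-f(o)\le\sqrt{C}\,\rho(o,\phi_{t}(p))$, i.e. $\rho(o,\phi_{t}(p))\ge C^{-1/2}\big(f(\phi_{t}(p))-f(o)\big)$. On the other hand $\tfrac{d}{ds}f(\gamma(s))=-|\nabla f|^{2}(\gamma(s))$, so for $t\le 0$
\[
f(\phi_{t}(p))-f(p)=\int_{t}^{0}|\nabla f|^{2}(\gamma(s))\,ds\ge a_{0}\,|t| ,
\]
by the lower bound on $|\nabla f|$ established above. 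Since $o$ is the minimum of the convex function $f$ we have $f(p)\ge f(o)$, and combining the two displays yields $\rho(o,\phi_{t}(p))\ge C^{-1/2}a_{0}\,|t|$, the desired bound with $C_{1}=C^{-1/2}a_{0}$.

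The only points that require care are the direction of the monotonicity in the first step—$|\nabla f|^{2}$ \emph{decreases} as $s\to+\infty$ toward the equilibrium, so on the backward half-line $s\le 0$ the gradient stays bounded below by its value at $p$—and the passage between $f$ and $\rho$, which relies on the global bound $|\nabla f|\le\sqrt{C}$ furnished by $R\ge 0$ and the identity $R+|\nabla f|^{2}\equiv C$. The existence and uniqueness of $o$, the properness of $f$, and $\phi_{t}(p)\to o$ are taken from \cite{DZ}. No compactness or blow-up analysis is needed here: everything reduces to integrating the two elementary identities for $f$ and $|\nabla f|^{2}$ along $\gamma$.
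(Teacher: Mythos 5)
Your argument is correct, and its core coincides with the paper's: both proofs rest on the identity $R+|\nabla f|^{2}=R(o)$, the monotonicity of $|\nabla f|^{2}$ (equivalently of $R$) along the flow line, and the integration of $\tfrac{d}{ds}f(\phi_{s}(p))=-|\nabla f|^{2}$ to show that $f$ grows linearly in $|t|$ with rate pinched between $|\nabla f(p)|^{2}=R(o)-R(p)>0$ and $R(o)$. Where you diverge is in how the distance $\rho(o,\phi_{t}(p))$ is compared with these quantities. The paper invokes Proposition 7 of Cao--Chen to get the two-sided comparison $C_{1}\rho\leq |f(o)-f(\phi_{t}(p))|\leq C_{2}\rho$ and then transfers the linear growth of $f$ to $\rho$; in particular the upper bound $\rho\leq C_{2}|t|$ there relies on the harder half of that proposition (linear lower growth of $f$ in terms of distance). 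You bypass this entirely: for the upper bound you estimate $\rho(o,\phi_{t}(p))$ by the length of the integral curve, which is at most $\sqrt{R(o)}\,|t|+\rho(o,p)$ since $|\dot\gamma|=|\nabla f|\leq\sqrt{R(o)}$, and for the lower bound you use only the elementary Lipschitz estimate $f(\phi_{t}(p))-f(o)\leq\sqrt{R(o)}\,\rho(o,\phi_{t}(p))$, which is the easy half of Cao--Chen and which you prove in one line. The result is a self-contained proof that needs no external citation beyond the uniqueness of the equilibrium point; as a minor bonus, your sign conventions ($R_{ij}=\nabla_{i}\nabla_{j}f$, so $f$ is convex, minimized at $o$, and increases along the backward flow) are internally consistent, whereas the paper's display for $f(p)-f(\phi_{t}(p))$ silently switches to the opposite convention to match Cao--Chen.
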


\begin{proof} By the identity (cf. \cite{H1}),
\begin{align}\label{scalar-curvature-identity}
R+|\nabla f|^{2}=A_0,
\end{align}
where $A_0$ is a constant, we have
$$|\nabla f|^{2}(x)+R(x)=R(o),\mbox{\quad}\forall ~x~\in M.$$
Then
$$\frac{{\rm d}}{{\rm d}t }R(\phi_{t}(p))={\rm Ric}(\nabla f,\overline{\nabla f})\ge0,\mbox{\quad}\forall~ t~\leq0.$$
In particular,
$$0\le R(\phi_{t}(p))\le R(p),~\forall t~\leq0.$$
Since
$$\frac{{\rm d}}{{\rm d}t }f(\phi_{t}(p))=-|\nabla f|^{2}(\phi_{t}(p)),\mbox{\quad}\forall~ t~\leq0,$$
we get from (\ref{scalar-curvature-identity}),
$$ R(o)-R(p)\leq -\frac{{\rm d}}{{\rm d}t }f(\phi_{t}(p))\leq R(o),~\forall t~\leq0.$$
It follows
\begin{align}\label{f-time-1}
(R(o)-R(p))|t|\leq f(p)-f(\phi_{t}(p))\leq R(o)|t|,\mbox{\quad}\forall~ t~ \leq0.
\end{align}
Consequently,
$$(R(o)-R(p))|t|+C(p)\leq f(o)-f(\phi_{t}(p))\leq R(o)|t|+C(p),\mbox{\quad}\forall ~t\leq0,$$
where $C(p)=f(o)-f(p)$.
On the there hand, by Proposition 7 in \cite{CaCh}, there are constants $C_{1},C_{2}>0$ such that
\begin{align}\label{f-distance-1}
 C_{1}\rho (o, \phi_{t}(p))\leq f(o)-f(\phi_{t}(p))\leq C_{2}\rho (o, \phi_{t}(p)),~\forall~ t\leq t_{0},
 \end{align}
where  $t_{0}$ is small enough constant. Combining the above two inequalities, we obtain (\ref{distance-in-t}).
\end{proof}

\begin{rem}
Let $A(r)=\{p\in M: f(p)=r \}$ for any $r\in \mathbb{R}$. Then $A(r)$ is compact as $r>>1$ since $f$ is strictly convex. Thus from the proof of Lemma \ref{lem-equivalence}, the constants $C_{1}$ and $C_{2}$ in (\ref{distance-in-t})
can be chosen uniformly for all $p\in A(r)$ so that both of them are independent of $t$.
\end{rem}

Combining Lemma \ref{lem-equivalence} and Proposition \ref{estimate-linear-decay}, we obtain a lower bound growth estimate for scalar curvature.

\begin{prop}\label{curvature-lower-estimate}For a $\kappa$-noncollapsed steady K\"ahler-Ricci soliton $(M,g)$ with nonnegative bisectional curvature and positive
Ricci curvature, the scalar curvature satisfies
\begin{align}\label{lower-bound-r}\frac{C}{\rho(x)}\leq R(x),~{\rm if}~\rho(x)\ge r_0,
\end{align}
where $\rho(x)=\rho(o,x)$ and $C>0$ is a uniform constant.
\end{prop}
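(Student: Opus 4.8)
The plan is to control $R$ along the integral curves of the soliton flow and to reduce the desired lower bound to a first-order ODE comparison. Fix a large $r_0$ so that the level set $A(r_0)=\{f=r_0\}$ is compact, and for each $p\in A(r_0)$ put $\gamma(t)=\phi_t(p)$ and $u(t)=R(\gamma(t))$, where $\phi_t$ is the flow of $-\nabla f$ introduced at the beginning of this section. Since ${\rm Ric}>0$ forces $R>0$, we have $u>0$ on $(-\infty,0]$; moreover, as $\tfrac{d}{dt}f(\gamma(t))=-|\nabla f|^2<0$, flowing backward in $t$ sweeps out every level set $A(r)$ with $r>r_0$, hence every point with $\rho(x)$ large. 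It therefore suffices to bound $u(t)$ from below for $t\le t_0<0$, uniformly in $p\in A(r_0)$.

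The key is the evolution equation for $u$. Because $g(t)=\phi_t^{\ast} g$ solves the Ricci flow and $\phi_t\colon (M,g(t))\to(M,g)$ is an isometry, the standard evolution $\partial_t R=\Delta R+2|{\rm Ric}|^2$ transported back to the fixed metric becomes
\[
u'(t)=\Delta R(\gamma(t))+2|{\rm Ric}|^2(\gamma(t)),
\]
which is the same nonnegative quantity $2{\rm Ric}(\nabla f,\nabla f)$ appearing in Lemma \ref{lem-equivalence}. By Proposition \ref{estimate-linear-decay} one has $|\Delta R|\le CR^2$, while nonnegativity of the Ricci curvature gives $|{\rm Ric}|^2\le R^2$; since both terms are $O(R^2)$, we obtain the differential inequality
\[
u'(t)\le C'\,u(t)^2 ,\qquad t\le 0,
\]
with $C'$ uniform. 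The decisive point is that the Laplacian term must be kept and estimated through Proposition \ref{estimate-linear-decay}: bounding $u'=2{\rm Ric}(\nabla f,\nabla f)$ crudely by $R(o)R$ would only produce an exponentially small lower bound, whereas the quadratic inequality yields the correct polynomial rate.

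Integrating $u'/u^2\le C'$ from $t<0$ to $0$ gives $1/u(t)\le 1/u(0)+C'|t|$, so
\[
u(t)\ \ge\ \frac{1}{C'|t|+1/u(0)}\ \ge\ \frac{C''}{|t|},\qquad t\le t_0,
\]
where $C''$ is uniform because $u(0)=R(p)$ is bounded below by a positive constant on the compact set $A(r_0)$. Finally, Lemma \ref{lem-equivalence}, with the constants chosen uniformly over $A(r_0)$ as in the Remark, gives $C_1|t|\le\rho(o,\gamma(t))=\rho(x)$, hence $1/|t|\ge C_1/\rho(x)$ and therefore $R(x)=u(t)\ge C''C_1/\rho(x)$. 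Setting $C=C''C_1$ proves (\ref{lower-bound-r}).

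The one genuinely delicate step is establishing the evolution identity for $u$ on the fixed manifold — the isometry and pullback bookkeeping together with the correct Kähler normalization of the constants in $\partial_t R=\Delta R+2|{\rm Ric}|^2$ — and, conceptually, recognizing that the $\Delta R$ term cannot be discarded but must instead be absorbed via the uniform bound $|\Delta R|\le CR^2$ of Proposition \ref{estimate-linear-decay} to produce the quadratic differential inequality.
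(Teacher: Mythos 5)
Your proposal is correct and follows essentially the same route as the paper: both use the evolution equation $\partial_t R=\Delta R+2|{\rm Ric}|^2$ along the flow $\phi_t$, invoke Proposition \ref{estimate-linear-decay} together with $|{\rm Ric}|^2\le R^2$ to get $|\partial_t R^{-1}|\le C+2$ (your $u'\le C'u^2$ is the same inequality), integrate to obtain $R(\phi_t(p))|t|\ge c$ for $|t|$ large uniformly over a compact level set of $f$, and then convert $|t|$ into $\rho(x)$ via the comparison of $f$, $t$ and distance from Lemma \ref{lem-equivalence} and its Remark (the paper runs this last step directly through (\ref{f-time-1}) and (\ref{f-distance-1}), which is the same content).
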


\begin{proof}
Since the scalar curvature $R(p,t)$ of $g(p,t)$ satisfies
$$\frac{\partial}{\partial t}R(p,t)=\Delta R(p,t)+2|{\rm Ric}(p,t)|^{2}, $$
by Proposition \ref{estimate-linear-decay}, there is a positive constant $C>0$ such that
$$|\frac{\partial}{\partial t}R^{-1}(p,t)|\leq \frac{|\Delta R(p,t)|}{R^{2}(p,t)}+\frac{2|{\rm Ric}(p,t)|^{2}}{R^{2}(p,t)}\leq C+2, $$
and consequently,
\begin{equation}\label{decay-along-integral-curve}
R(p,t)|t|\geq \frac{|t|}{(C+2)|t|+R(p,0)^{-1}}\geq\frac{1}{2(C+2)}
\end{equation}
as long as $|t|$ is large enough.

Next we show that (\ref{decay-along-integral-curve}) implies  (\ref{lower-bound-r}). We may assume $f(o)=0$. For any $x$ such that $f(x)\gg 1$. Note that there exists  $p_x\in \{q\in M|f(q)=1\}$ and $t_x<0$ such that $\phi_{t_x}(p_x)=x$. By  (\ref{decay-along-integral-curve}) together with (\ref{f-time-1}) and(\ref{f-distance-1}), we have
\begin{align*}
R(x)\ge&\frac{1}{|t_x|}\cdot\frac{1}{(C+2)+(R(p_x)|t_x|)^{-1}}\\
\ge&\frac{R(o)-R(p_x)}{f(x)-f(p_x)}\cdot\frac{1}{(C+2)+(R(p_x)|t_x|)^{-1}}\\
\ge&\frac{R(o)-R(p_x)}{2(f(x)-f(o))}\cdot\frac{1}{(C+2)+(R(p_x)|t_x|)^{-1}}\\
\ge&\frac{R(o)-M_1}{2C_2\rho(x)}\cdot\frac{1}{2(C+2)},~\forall ~|t_x|\ge \frac{C+2}{R(p_x)}.
\end{align*}
Here $M_1=\sup_{q\in\{f=1\}}R(q)$.
On the other hand, by (\ref{f-time-1}), we have
$$|t_x|\ge \frac{f(x)-f(p_x)}{R(o)-R(p_x)}=\frac{f(x)-1}{R(o)-R(p_x)}.$$
Then   it holds
$$R(x)\ge \frac{R(o) -M_1}{4C_2(C+2)\rho(x)}\ge  \frac{1}{C_3(C+2)\rho(x)} ,$$
 as long as $f(x)\ge \frac{C+2}{m_1}\cdot(R(o)-m_1)+1,$
 where  $m_1=\inf_{q\in\{f=1\}}R(q)$.
Note that $C$, $C_3$ and $m_1$  are all independent of $x,t$. Hence, by (\ref{f-distance-1}),  we get (\ref{lower-bound-r}).

\end{proof}

Now we are ready to prove Theorem \ref{soliton-1}.

\begin{proof}[Proof Theorem \ref{soliton-1}]    By Proposition \ref{curvature-lower-estimate}, we have
\begin{equation}\label{eq:6}
\lim_{i\to\infty}\rho^{2}(o,p_i)R(p_i,0)=\infty.
\end{equation}
Let $\hat g_i( t)=R(p_{i},0)g(R^{-1}(p_{i},0)t)$ be a sequence of rescaled Ricci flows of $g(t)$. Clearly, $R(p_i; \hat g_i(0)))=1.$
Then applying Theorem \ref{compactness-theorem} to $(M, \hat g_i(t); p_{i}),$ we see that $(M, \hat g_i( t); p_i)$ converges to a pseudo $\kappa$ K\"ahler solution $(M_{\infty}, \tilde g(t),$ $ p_{\infty})$ of (\ref{ricci-equation}).
Moreover, by (\ref{eq:6}) and nonnegative sectional curvature condition, we can construct a geodesic line through $p_\infty$ in $(M_{\infty}, \tilde g(t);  p_{\infty})$ (cf. Theorem 5.35 in \cite{MT}). Thus by the Cheeger-Gromoll splitting theorem \cite{CG},
$(M_{\infty},\tilde g(0))$ must split off a line. Let $X$ be the vector field tangent to the line with the norm equal to $1$ and $J_{\infty}$ the complex structure on $M_{\infty}$. Then
$J_{\infty}X$ generates a geodesic curve $\gamma(s)$ in $M_\infty$. If $\gamma(s)$ is not closed, it is a geodesic line on $M_{\infty}$. If $\gamma(s)$ is closed, it is a flat $\mathbb S^1$.
Hence $(M_{\infty}, \tilde g( 0))$ splits off a complex line $N_1=\mathbb C^1$ or a cylinder $N_1=\mathbb R^1\times \mathbb S^1$. Namely, $M_\infty=N_1\times N_2$ and
$\tilde g( t)={\rm d}z\otimes{\rm d}\overline{z}+g_{N_{2}}(t)$, where $g_{N_2}(t)$ is a pseudo $\kappa$ K\"ahler solution of (\ref{ricci-equation}) on a complex manifold $N_2$
with dimension $n-1$.

In case ${\rm dim}_{\mathbb C}(M)=2$, $(M_{\infty}, \tilde g( t))= (N_{1}\times N_{2},{\rm d}z\otimes{\rm d}\overline{z}+ g_{N_{2}}(t))$, where
$g_{N_{2}}$ is a pseudo $\kappa$ K\"ahler solution of (\ref{ricci-equation}) on a surface $N_2$. In particular, the scalar curvture $\tilde R(\cdot,t)$ of $g_{N_{2}}(t))$ satisfies
Harnack inequality
\begin{align}\label{harnack-r-2} \frac{\partial }{\partial t}\tilde R(\cdot,t)\geq0, ~{\rm in}~ N_{2}\times (-\infty, 0].
\end{align}
By Lemma \ref{round-sphere} below, we see that $(N_{2},g_{N_{2}}(t))=(\mathbb{C}\mathbb{P}^{1},(1-t)g_{FS})$.
\end{proof}

Since Theorem \ref{compactness-theorem} holds for $\kappa$-solutions and all lemmas in this section are true for all steady  Ricci solitons, one can prove Theorem \ref{theorem-soliton-real} by the same argument as in the proof of Theorem \ref{soliton-1}.

The following lemma is a generalization of Corollary 11.3 in \cite{Pe1} which says: Any oriented $\kappa$-solution on a surface is a shrinking round sphere.
\begin{lem}\label{round-sphere}
Any oriented pseudo $\kappa$-solution $(M,g(\cdot, t))$ $(t\le 0)$ on a surface is a shrinking round sphere.
\end{lem}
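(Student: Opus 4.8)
The plan is to prove Lemma~\ref{round-sphere} by reducing it to Perelman's Corollary~11.3 in \cite{Pe1}, the only missing ingredient being that a pseudo $\kappa$-solution on a surface is in fact a genuine $\kappa$-solution. Recall that the difference between the two notions is exactly that a genuine $\kappa$-solution is assumed to have bounded curvature on each time slice, whereas a pseudo $\kappa$-solution is only required to be $\kappa$-noncollapsed, to have nonnegative curvature operator (which in complex dimension one is simply $R\ge 0$), and to satisfy the trace Harnack inequality \eqref{eq:Harnack-real}. So the heart of the matter is an a priori bound on the curvature, after which Perelman's classification applies verbatim.

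First I would exploit that on a surface the curvature operator reduces to the scalar curvature, so nonnegativity of the curvature operator is just $R\ge 0$, and the trace Harnack inequality \eqref{harnack-r} holds, giving $\partial R/\partial t\ge 0$ and the multiplicative comparison estimate for $R$ at different points and times. Since the flow is defined on all of $(-\infty,0]$ and $R$ is nondecreasing in $t$, it suffices to bound $R(\cdot,0)$. The key step is to establish that $(M,g(0))$ has bounded curvature. For this I would run exactly the argument packaged in Lemma~\ref{lem-curvature bound} and Lemma~\ref{lem-local bound}: those lemmas were proved for $\kappa$-solutions, but the only properties they actually used were $\kappa$-noncollapsing, $R\ge 0$, the trace Harnack inequality, and Proposition~\ref{Volume} (via Corollary~\ref{ASR} and Lemma~\ref{cor of Volume}). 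All of these are available for a pseudo $\kappa$-solution, since Proposition~\ref{Volume} and Lemma~\ref{cor of Volume} are stated for general ancient solutions with nonnegative bounded-or-not curvature operator satisfying the Harnack estimate. Thus the asymptotic scalar curvature is infinite, and the local-boundedness argument of Lemma~\ref{lem-curvature bound} produces a uniform $\delta>0$ and a local bound $R(q,0)\le\delta^{-2}$ on $B(p,\delta;0)$; propagating this through the Bishop--Gromov comparison and Lemma~\ref{cor of Volume} exactly as in the proof of Theorem~\ref{compactness-theorem} yields a bound on $R(\cdot,0)$ on every fixed ball, hence on all of $M$ after noting the flow is a single fixed manifold rather than a sequence.

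Once the curvature of $g(0)$ is bounded, $\partial R/\partial t\ge0$ forces $R(\cdot,t)\le R(\cdot,0)$ for all $t\le0$, so the curvature is uniformly bounded on $M\times(-\infty,0]$ and the pseudo $\kappa$-solution is a bona fide oriented $\kappa$-solution on a surface. At that point Perelman's Corollary~11.3 in \cite{Pe1} applies directly and identifies $(M,g(t))$ as the shrinking round sphere, which is the asserted conclusion; specializing to the normalization used in Theorem~\ref{soliton-1} gives $(N_2,g_{N_2}(t))=(\mathbb{CP}^1,(1-t)g_{FS})$.

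I expect the main obstacle to be the curvature-boundedness step, namely checking honestly that the chain Lemma~\ref{cor of Volume} $\Rightarrow$ Lemma~\ref{lem-local bound} $\Rightarrow$ Lemma~\ref{lem-curvature bound} goes through when ``$\kappa$-solution'' is weakened to ``pseudo $\kappa$-solution.'' The delicate point is that the proofs of those lemmas invoke Hamilton's compactness theorem on rescaled flows, and Hamilton compactness requires a \emph{local} uniform curvature bound, not an a priori global one; one must verify that the local bounds produced along the way (via \eqref{eq:3-1}, \eqref{eq:3-3}, and the Harnack inequality) are genuinely available without presupposing bounded curvature. Since the Harnack inequality \eqref{harnack-r} and the noncollapsing hypothesis are exactly what drive those local bounds, and neither requires global curvature control, the argument should survive, but this is the place where care is needed and where the pseudo-versus-genuine distinction is actually cashed in.
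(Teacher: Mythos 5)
Your opening move --- reduce to Perelman's Corollary 11.3 by showing a pseudo $\kappa$-solution on a surface has bounded curvature --- is in the same spirit as the paper, which likewise begins by noting that it suffices to rule out the noncompact, unbounded-curvature case. But the way you propose to establish the curvature bound has a genuine gap. Running the chain Lemma \ref{cor of Volume} $\Rightarrow$ Lemma \ref{lem-local bound} $\Rightarrow$ Lemma \ref{lem-curvature bound} as in the proof of Theorem \ref{compactness-theorem} only yields $R(q,0)\leq C(r)(r+\delta)^{-2}$ on the ball $B(p,\delta+r,0)$, where $C(r)=C(\nu)$ comes from applying Lemma \ref{cor of Volume} with the noncollapsing ratio $\nu=\kappa(1+r/\delta)^{-2n}$. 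Since $\nu\to 0$, and hence $C(\nu)\to\infty$, as $r\to\infty$, this is a locally uniform bound whose constant degenerates with the radius; the step ``a bound on every fixed ball, hence on all of $M$'' is exactly the non sequitur. Indeed, the whole reason Definition \ref{def-pesudo-solution} introduces pseudo $\kappa$-solutions, instead of asserting that the limits in Theorem \ref{compactness-theorem} are genuine $\kappa$-solutions, is that this machinery cannot upgrade local bounds to a global one; your argument, if valid, would prove every pseudo $\kappa$-solution has bounded curvature, which is precisely what the authors could not do and had to work around. Your closing worry about Hamilton compactness is not where the danger lies --- the local bounds are fine --- the local-to-global passage is.

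What the paper actually does in the remaining case (noncompact with unbounded curvature) is a point-picking blow-up rather than a direct bound: choose $p_i$ with $R(p_i,-1)\to\infty$ and $\rho_{g(-1)}(p_0,p_i)\to\infty$, apply Lemma \ref{lem-constructing-sequence} to produce points $q_i$ with $R(q,-1)\leq 4R(q_i,-1)$ on balls whose rescaled radii $r_i\sqrt{R(p_i,-1)}$ tend to infinity, pass to a rescaled limit $(M_\infty,g_\infty(t);q_\infty)$ via Hamilton compactness, and use $\rho^2_{g(-1)}(p_0,q_i)R(q_i,-1)\to\infty$ to split off a line in the limit. On a real two-dimensional surface a flow that splits off a line is flat, contradicting the normalization $R(q_\infty,-1)=1$. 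To repair your write-up, replace the ``hence on all of $M$'' step with this (or an equivalent) contradiction argument; the rest of your outline is consistent with the paper.
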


\begin{proof}
By Corollary 11.3 in \cite{Pe1}, it suffices to rule out  the case that $(M,g(t))$ is noncompact and has unbounded curvature.
In this case, we may assume that there is a sequence of points $p_{i}$ such that $R(p_{i},-1)\rightarrow\infty$ and $\rho_{g(-1)}( p_{0}, p_{i})\rightarrow\infty$, where $p_{0}$ is a fixed point. In particular,
\begin{equation}\label{eq:5}
\rho^{2}_{g(-1)}( p_{0}, p_{i} )R(p_{i},-1)\rightarrow\infty,~{\rm as}~ i\rightarrow\infty.
\end{equation}
By taking $f(x,t)=\sqrt{R(x,t)}$ and $r=r_{i}=\frac{1}{4}\rho_{g(-1)}(p_0,p_{i})$ in Lemma \ref{lem-constructing-sequence},
we can find a sequence of points $q_{i}$ such that $R(q_{i},-1)\geq R(p_{i},-1)$ and
$$R(q,-1)\leq 4R(q_{i},-1), ~\forall ~q\in B(q_{i},d_{i}, -1),$$
where $d_{i}\sqrt{R(q_{i},-1)}=r_{i}\sqrt{R(p_{i},-1)}$. Moreover,
$$\rho_{g(-1)}(p_{i},q_{i})\leq2r_{i}=\frac{1}{2}\rho_{g(-1)}(p_{0},p_{i}).$$
Hence
$$\rho_{g(-1)}(p_{0},q_{i})\geq \rho_{g(-1)}(p_{0},p_{i})-\rho_{g(-1)}(p_{i},q_{i})\geq\frac{1}{2}\rho_{g(-1)}(p_{0},p_{i}).$$
It follows
\begin{equation}\label{eq:4}
\lim_{i\rightarrow\infty}\rho^{2}_{g(-1)}(p_{0},q_{i})R(q_{i},-1)=\infty.
\end{equation}

Now, we consider a sequence of rescaled Ricci flows $(M_{i},g_{i}^{\prime}(t); q_{i})$, where $g^{\prime}_{i}(t)=R(q_{i},-1)g(R^{-1}(q_{i},-1)(t+1)-1)$. Since $\frac{\partial}{\partial t}R\geq0$, we have
$$R_{g_{i}^{\prime}}(q,t)\leq 4, ~\forall ~q\in B(q_{i},r_{i}\sqrt{R(p_{i},-1)}, g_{i}^{\prime}), t\leq-1.$$
Note that $ r_{i}\sqrt{R(p_{i},-1)}$ go to infinity as $i\to \infty$ by (\ref{eq:5}). This means that the curvature of flows are locally uniformly bounded. Together with the $\kappa$-noncollapsed condition, $(M_{i},g_{i}^{\prime}(t); q_{i})$ converge
to a limit Ricci flow $(M_{\infty},g_{\infty}(t); q_{\infty})$ for $t\leq-1$. Moreover it is a pseudo $\kappa$ K\"ahler solution. On the other hand, by (\ref{eq:4}) and nonnegative sectional curvature condition, one can construct a geodesic line through $q_\infty$ in $(M_\infty, g_{\infty}; q_{\infty})$ (cf. Theorem 5.35 in \cite{MT}). Thus $(M_{\infty},g_{\infty}(-1)))$ splits off a line. As a consequence, it is isometric to $ \mathbb C^1$ or $\mathbb R^1\times \mathbb S^1$ with the flat metric.
But this is impossible since $R(q_{\infty},-1)=1$. The lemma is proved.
\end{proof}

As an application of Theorem \ref{soliton-1}, we get the following precise estimate for scalar curvature of steady solitons on a complex surface.

\begin{cor}\label{cor-of-splitting-theorem} Let $(M,g,f)$ be a $2$-dimensional
$\kappa$-noncollapsed steady K\"{a}h-$ $ler-Ricci soliton with positive sectional curvature. Let $o\in M$ be the unique equilibrium point
such that $\nabla f(o)=0$ and $p\neq o$. Then
\begin{equation}\label{decay-scalar-t}
R(p,t)|t|\rightarrow1, ~ {\rm as} ~t\rightarrow-\infty.
\end{equation}
As a consequence, there are constants $C_1$ and $C_2$ such that
\begin{align}
\frac{C_1}{\rho(x)}\le R(x)\le \frac{C_2}{\rho(x)}.
\end{align}

\end{cor}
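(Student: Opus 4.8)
The plan is to combine the asymptotic splitting from Theorem \ref{soliton-1} with the soliton identities established in Section 4. The key observation is that Theorem \ref{soliton-1} identifies the precise limit of the rescaled flows: for a $2$-dimensional $\kappa$-noncollapsed steady K\"ahler-Ricci soliton with positive sectional curvature, the sequence $(M, R(p_i)g(R^{-1}(p_i)t); p_i)$ converges to the product flow $(N_1\times \mathbb{C}\mathbb{P}^1, \mathrm{d}z\otimes\mathrm{d}\overline{z}+(1-t)g_{FS})$. Since the scalar curvature of $(1-t)g_{FS}$ at the center satisfies $\tilde R(\cdot, t)=\frac{1}{|t|+1}$ after normalizing so that $\tilde R(\cdot, 0)=1$ (the round $\mathbb{C}\mathbb{P}^1$ factor carries all the curvature, as the $N_1$ factor is flat), the limiting flow satisfies $\tilde R(p_\infty, t)|t|\to 1$ as $t\to -\infty$.

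First I would translate the convergence into an estimate along the integral curves of $-\nabla f$. Recall from the proof of Proposition \ref{estimate-linear-decay} and the equation $\frac{\partial}{\partial t}R=\Delta R+2|\mathrm{Ric}|^2$ that $|\frac{\partial}{\partial t}R^{-1}(p,t)|$ is uniformly bounded, which already gave in (\ref{decay-along-integral-curve}) the lower bound $R(p,t)|t|\ge \frac{1}{2(C+2)}$ for large $|t|$. The improvement to the sharp limit $R(p,t)|t|\to 1$ requires identifying the constant, and this is exactly what the explicit limit $(\mathbb{C}\mathbb{P}^1,(1-t)g_{FS})$ supplies: the round sphere rigidity in Lemma \ref{round-sphere} pins down the limiting geometry with no free parameter. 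Concretely, fixing $p\neq o$ and writing $p=\phi_{t_x}(p_x)$ as in the proof of Proposition \ref{curvature-lower-estimate}, the rescaled flows based at points going to infinity converge to the product, and in that product the scalar curvature of the fiber at time $t$ equals $\frac{R(\cdot,0)}{1-R(\cdot,0)t}$; reading off the asymptotics as $t\to-\infty$ yields $R(p,t)|t|\to 1$.

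The hard part will be justifying that the \emph{pointwise} quantity $R(p,t)|t|$ converges to the value dictated by the limit flow, rather than merely being squeezed between two constants. The subtlety is that Cheeger-Gromov convergence is only up to diffeomorphism and only on compact time intervals, so one must argue that the curvature along a \emph{single} integral curve $\phi_t(p)$ tracks the curvature at the basepoint of the limiting $\mathbb{C}\mathbb{P}^1$ factor. The mechanism is the steady soliton equation: along $\phi_t(p)$ the derivative identity $\frac{\mathrm{d}}{\mathrm{d}t}R(\phi_t(p))=\mathrm{Ric}(\nabla f,\overline{\nabla f})$ together with the uniform two-sided control on $|\frac{\partial}{\partial t}R^{-1}|$ forces $R^{-1}(p,t)$ to be asymptotically linear in $|t|$ with slope exactly $1$, the slope being computed from the limiting round-sphere flow. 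I would carry this out by a contradiction argument paralleling Proposition \ref{estimate-linear-decay}: if $R(p,t)|t|$ had a subsequential limit $\neq 1$, rescaling at those times would produce, via Theorem \ref{soliton-1} and Lemma \ref{round-sphere}, a limit with fiber scalar curvature inconsistent with $(1-t)g_{FS}$.

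Finally, once (\ref{decay-scalar-t}) is established, the two-sided bound $\frac{C_1}{\rho(x)}\le R(x)\le \frac{C_2}{\rho(x)}$ follows immediately. The lower bound is already Proposition \ref{curvature-lower-estimate}. For the upper bound, I would use Lemma \ref{lem-equivalence}, which gives $C_1|t|\le \rho(o,\phi_t(p))\le C_2|t|$, so that $\rho(x)$ is comparable to $|t_x|$ when $x=\phi_{t_x}(p_x)$; substituting $|t_x|\asymp \rho(x)$ into $R(x)=R(\phi_{t_x}(p_x))\asymp |t_x|^{-1}$, which is the content of the sharp decay (\ref{decay-scalar-t}) applied uniformly over the compact level set $\{f=1\}$ (using the Remark after Lemma \ref{lem-equivalence} to make the constants independent of the starting point), yields $R(x)\asymp \rho(x)^{-1}$.
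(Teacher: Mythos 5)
Your proposal is correct and follows essentially the same route as the paper: the paper's proof is exactly your "derivative" mechanism, namely a contradiction/rescaling argument (Claim 4.6) showing $\frac{\partial}{\partial t}R^{-1}(p,-t)\to 1$ uniformly over the level set $A(1)$, with the slope $1$ read off from the limit flow $(N_1\times\mathbb{CP}^1,\,{\rm d}z\otimes{\rm d}\bar z+(1-t)g_{FS})$ via $\frac{\partial}{\partial t}\tilde R(p_\infty,0)=\Delta\tilde R+2|\widetilde{\rm Ric}|^2=1$, after which integration and Lemma 4.1 give both (\ref{decay-scalar-t}) and the $\rho$-estimate (the lower bound being Proposition 4.3). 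The only caution is that the contradiction must indeed be run on the time-local quantity $\frac{\partial}{\partial t}R^{-1}$ rather than on $R(p,t)|t|$ itself, since the Cheeger--Gromov limit does not see the global time parameter --- a point you already flag and resolve correctly.
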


\begin{proof} We first prove the following claim.
\begin{claim}\label{uniform-estimate-for-decay}
\begin{align}
\lim_{t\rightarrow \infty}\frac{\partial }{\partial t}R^{-1}(p,-t)=1.
\end{align}
Moreover, the convergence is uniform for all $p\in A(1)$, where $A(1)=\{q\in M|~ f(q)=1\}$.
\end{claim}
\begin{proof}[Proof of claim]We prove the claim by contradiction. On the contrary,  we can find $\delta>0$, $p_{(i)}\in A(1)$ and $t_{i}\rightarrow\infty$ such that
\begin{align}\label{contr-assump}|\frac{\partial }{\partial t}R^{-1}(p_{(i)},-t_i)-1|\ge \delta>0.
\end{align}
Let $\phi_{t}$ be the group of biholomorphisms generated by $-\nabla f$ and $g(t)$ the corresponding K\"ahler-Ricci flow. Let $p_{i}=\phi_{t_{i}}(p_{(i)})$. Consider a sequence of  rescaled Ricci flows $(M, \hat g_i(t); p_i) $ as in Theorem \ref{soliton-1}, where $\hat g_i(t)=R(p_{i},0)$
 \newline $g(R^{-1}(p_{i},0)t)$. Then $(M, \hat g_i(t); p_i)$ subsequently converges  to a limit Ricci flow $(M_{\infty}, \tilde g( t); p_{\infty})$ while $(M_{\infty}, \tilde g( 0);  p_{\infty})$ is isometric to $(N_{1}\times\mathbb{CP}^{1}, dz\otimes d\bar z+g_{FS})$.
Moreover, by the flow equation for scalar curvature $\tilde R(\cdot, t)$ of $\tilde g( t)$ at $(p_\infty, 0)$,
\begin{align}
\frac{\partial}{\partial t}\tilde R(p_{\infty},0)=\Delta \tilde R(p_{\infty},0)+ 2|\tilde {\rm Ric}|^{2}(p_{\infty},0),\notag
\end{align}
we get
$$\frac{\partial}{\partial t} \tilde R(p_{\infty},0)=1.$$
On the other hand, by the convergence of $(R(p_{i},0)g(R^{-1}(p_{i},0)t;  p_i)$, we have
\begin{align}
\frac{\partial}{\partial t}\tilde R(p_{\infty},0)=\lim_{i\rightarrow\infty} \frac{1}{R^{2}(p_{i},0)}\frac{\partial }{\partial t} R(p_{i},0)=\lim_{i\rightarrow\infty} \frac{1}{R^{2}(p_{(i)},-t_{i})}\frac{\partial}{\partial t} R(p_{(i)},-t_{i}).\notag
\end{align}
Thus
\begin{align}
\lim_{i\rightarrow\infty}G(p_{(i)},t_{i})=1,\notag
\end{align}
where $G(p,t)=\frac{\partial}{\partial t}R^{-1}(p,-t)$. This is   contradict to (\ref{contr-assump}). Hence the claim is true.
\end{proof}

   By Claim \ref{uniform-estimate-for-decay}, for any $\epsilon>0$,  there exists  a $t(\epsilon)<0$ such that
\begin{equation}\label{decay-along-integral-curve-2}
R(p,t)|t|\le \frac{1}{1-\epsilon},~\forall ~p\in A(1),~t\le t(\epsilon).
\end{equation}
We may assume $f(o)=0$. For any $x$ such that $f(x)\gg 1$, we can find  $p_x\in \{q\in M|f(q)=1\}$ and $t_x<0$ such that $\phi_{t_x}(p_x)=x$. By (\ref{decay-along-integral-curve-2}) together with (\ref{f-time-1}) and (\ref{f-distance-1}), we have
\begin{align*}
R(x)\le&\frac{R(o)}{f(x)-f(p_x)}\cdot\frac{1}{1-\epsilon}\\
\le&\frac{2R(o)}{f(x)-f(o)}\cdot\frac{1}{1-\epsilon}\\
\le&\frac{2R(o)}{C\rho(x)}\cdot\frac{1}{1-\epsilon},~\forall ~|t_x|\ge|t(\epsilon)|.
\end{align*}
Note that by (\ref{f-time-1}) we have
$$|t_x|\ge \frac{f(x)-f(p_x)}{R(o)-R(p_x)}=\frac{f(x)-1}{R(o)-R(p_x)}.$$
Thus as long as $f(x)\ge |t(\epsilon)|\cdot(R(o)-m_1)+1$, where  $m_1=\inf_{q\in\{f=1\}}R(q)$.  we obtain
$$R(x)\le \frac{2R(o)}{C\rho(x)}\cdot\frac{1}{1-\epsilon}.$$
The proof is finished.

\end{proof}
\section{Nonexistence of noncollapsed steady K\"{a}hler-Ricci soliton}

In this section, we prove Theorem \ref{main-theorem-nonexistence-1} and Theorem \ref{theorem-eternal-flow}.
First we recall a result of Bryant about the existence of global Poincar\'{e} coordinates on a steady K\"{a}hler-Ricci soliton \cite{Bry}.

\begin{theo}\label{Bryant}
Let $(M,g,f)$ be a steady K\"{a}hler-Ricci soliton with positive Ricci curvature, which admits an equilibrium point on $M$. Let $Z=\frac{\nabla f-\sqrt{-1}J\nabla f}{2}$. Then there exist global holomorphic coordinates
(Poincar\'{e} coordinates)
$z:M\rightarrow \mathbb{C}^{n}$
which linearize Z. Namely,
there are positive constants $h_{1},\cdots,h_{n}$ such that
\begin{equation}\label{Bryant-eq}
Z=\sum_{i=1}^{n}h_{i}z_{i}\frac{\partial}{\partial z_{i}}.
\end{equation}
\end{theo}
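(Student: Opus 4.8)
The plan is to realize $Z=\tfrac{1}{2}(\nabla f-\sqrt{-1}J\nabla f)$ as a complete holomorphic vector field whose unique zero is the equilibrium point $o$, and then to linearize it by transporting a linearizing chart around $o$ over the whole manifold along the gradient flow. First I would record the structural facts. The soliton equation (\ref{soliton-complex}), and in particular $\nabla_{\bar i}\nabla_{\bar j}f=0$, says exactly that the $(1,0)$-field $Z^{i}=g^{i\bar j}\partial_{\bar j}f$ is holomorphic. Since $\nabla f$ and $J\nabla f$ generate the complete one-parameter groups $\phi_{t}$ and $\psi_{t}$ (the latter by isometries), $Z$ is a complete holomorphic vector field, so $\exp(sZ)$ is defined for all $s\in\mathbb{C}$. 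Because $\mathrm{Ric}>0$, the potential $f$ is strictly plurisubharmonic, so the unique equilibrium point $o$ (cf. \cite{DZ}) is the only critical point of $f$, hence the only zero of $Z$; moreover $\phi_{t}(p)\to o$ as $t\to+\infty$ for every $p$, so $M$ is the entire basin of attraction of $o$.

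Next I would compute the linearization of $Z$ at $o$. Differentiating $Z^{i}=g^{i\bar j}\partial_{\bar j}f$ and using $\partial f(o)=0$ gives $DZ|_{o}=(g^{i\bar j}R_{k\bar j})|_{o}$, i.e. the Ricci endomorphism, which is Hermitian and positive definite; let $h_{1},\dots,h_{n}>0$ be its eigenvalues. Choosing holomorphic coordinates $w=(w^{1},\dots,w^{n})$ centered at $o$ that diagonalize $DZ|_{o}$, one has $Z=\sum_{i}h_{i}w^{i}\partial_{w^{i}}+O(|w|^{2})$ near $o$. Since all eigenvalues lie in the right half-plane (the Poincar\'e domain), Poincar\'e's linearization theorem produces local holomorphic coordinates on a neighborhood $U$ of $o$ in which $Z=\sum_{i}h_{i}z^{i}\partial_{z^{i}}$; alternatively one may first linearize the compact torus $T=\overline{\{\psi_{\theta}\}}$, which acts holomorphically and isometrically fixing $o$, by Bochner's theorem, fixing the same weights $h_{i}$.

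Finally I would globalize. The key algebraic fact is that the flows of $\nabla f$ and $J\nabla f$ commute (both preserve $f$ and one is isometric), whence $[\nabla f,Z]=[\bar Z,Z]=0$ and $(\phi_{t})_{*}Z=Z$. For $p\in M$ choose $t$ large so that $\phi_{t}(p)\in U$ and set $z^{i}(p)=e^{h_{i}t}\,z^{i}(\phi_{t}(p))$; the relation $(\phi_{t})_{*}Z=Z$ together with $Z z^{i}=h_{i}z^{i}$ on $U$ shows this is independent of $t$ and defines a global holomorphic map $z:M\to\mathbb{C}^{n}$ with $Z=\sum_{i}h_{i}z^{i}\partial_{z^{i}}$ everywhere. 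That $z$ is a biholomorphism onto $\mathbb{C}^{n}$ then follows because $z$ conjugates the contracting flow $\phi_{t}$ to the linear contraction $\mathrm{diag}(e^{-h_{i}t})$: local invertibility propagates from $U$ along the flow, injectivity follows by flowing two points with equal image forward into $U$, and surjectivity follows by running the flow backward, using completeness of $Z$.

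The main obstacle is this interplay between the local and global steps. Locally, the genuine obstruction is resonance among the weights, $h_{i}=\sum_{j}m_{j}h_{j}$ with $m_{j}\ge 0$, $\sum m_{j}\ge 2$, which a priori forces only a Poincar\'e--Dulac polynomial normal form rather than a linear one; here the rigidity coming from the commuting isometric torus action (which must also preserve the normal form) is what kills the resonant terms. Globally, the delicate point is to upgrade $z$ from an immersion to a biholomorphism onto all of $\mathbb{C}^{n}$, for which the completeness of $Z$ and the fact that $M$ is the full basin of attraction of $o$ are essential.
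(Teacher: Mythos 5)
The paper offers no proof of this statement---it is quoted from Bryant's paper \cite{Bry}---so there is no internal argument to compare against; judged on its own, your outline reproduces Bryant's architecture correctly in most respects: holomorphicity of $Z$ from $\nabla_{\bar i}\nabla_{\bar j}f=0$, the equilibrium point as the unique zero of $Z$ with linear part the positive Hermitian Ricci endomorphism $g^{i\bar j}R_{k\bar j}|_o$, local linearization at $o$, and propagation of the chart along the gradient flow (your globalization step, including injectivity and surjectivity via completeness and attraction to $o$, is sound, modulo the sign slip that it is the flow of $-\nabla f$, not $\nabla f$, that converges to $o$). The gap is in the step you yourself flag as the crux. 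Poincar\'e's theorem does not apply when the weights are resonant ($h_i=\sum_j m_jh_j$ with $|m|\ge 2$, which can certainly occur since the $h_i$ are just the eigenvalues of ${\rm Ric}(o)$), and your proposed remedy---that the commuting isometric torus action ``preserves the normal form'' and thereby kills the resonant terms---does not work: a resonant monomial $z^m\partial_{z_i}$ transforms under the torus generated by the semisimple part with character $e^{\sqrt{-1}s(\sum_jm_jh_j-h_i)}=1$, so the resonant terms are exactly the torus-invariant ones, and commutation imposes no constraint on them.

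What actually kills them is compactness, not commutation. Since $\psi_s$ is isometric and fixes $o$, its closure $T$ in the compact stabilizer of $o$ in the isometry group is a torus; Bochner's averaging trick linearizes $T$ holomorphically near $o$, hence makes $J\nabla f$ a linear vector field, and since a holomorphic $(1,0)$-field is determined pointwise by this real field via $Z=-\tfrac12\bigl(J(J\nabla f)+\sqrt{-1}\,J\nabla f\bigr)$, the field $Z$ itself becomes linear in these coordinates---no normal-form analysis is needed at all, so the Bochner route you mention only as a way of ``fixing the weights'' is in fact the whole local argument. Equivalently, in a Poincar\'e--Dulac normal form $Z=S+N$ with $[S,N]=0$ and $N$ resonant of degree $\ge 2$, the isometric flow $\exp(s\sqrt{-1}Z)=\exp(s\sqrt{-1}S)\exp(s\sqrt{-1}N)$ would have Taylor coefficients at $o$ growing polynomially in $s$ unless $N=0$, contradicting compactness of the stabilizer. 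With the local step repaired in one of these two ways, the rest of your proposal goes through.
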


\begin{cor}\label{special sequence of point}
Let $(M,g,f)$ be a steady K\"{a}hler-Ricci soliton with nonnegative bisectional curvature and positive Ricci curvature. Then, there exists a sequence of point $p_{k}\rightarrow\infty$ such that every integral curve $\gamma_{k}(s)$ of $J\nabla f$ starting from $p_{k}$ is closed with the same period time. Moreover, the length of $\gamma_{k}(s)$ is uniformly bounded from above.
\end{cor}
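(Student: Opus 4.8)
The plan is to read off the dynamics of the Killing field $J\nabla f$ directly from the linearizing coordinates supplied by Theorem \ref{Bryant}, and then to exploit the soliton identity (\ref{scalar-curvature-identity}) to control arclength. Recall that $Z=\frac12(\nabla f-\sqrt{-1}J\nabla f)$ is the $(1,0)$-part of $\nabla f$, so that $J\nabla f=\sqrt{-1}(Z-\bar Z)$. In the Poincar\'e coordinates $z=(z_1,\dots,z_n)$ of Theorem \ref{Bryant} one has $Z=\sum_i h_iz_i\,\partial_{z_i}$ with $h_i>0$, hence
$$J\nabla f=\sqrt{-1}\Big(\sum_i h_iz_i\,\partial_{z_i}-\sum_i h_i\bar z_i\,\partial_{\bar z_i}\Big).$$
First I would integrate the flow of $J\nabla f$ in these coordinates: the integral curve through a point with coordinates $(z_1^0,\dots,z_n^0)$ is $z_i(s)=z_i^0e^{\sqrt{-1}h_is}$. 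In particular each coordinate axis is invariant, and along it the orbit closes up after flow-time $2\pi/h_i$.

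Concretely, I would take $p_k$ to lie on a single fixed axis, say $p_k=(z_1^{(k)},0,\dots,0)$ with $|z_1^{(k)}|\to\infty$. Its orbit is then $\gamma_k(s)=(z_1^{(k)}e^{\sqrt{-1}h_1s},0,\dots,0)$, a closed curve of flow-period $T=2\pi/h_1$, the same value for every $k$. Since $z:M\to\mathbb C^n$ is a global holomorphic coordinate system with $o$ sent to the origin, the condition $|z_1^{(k)}|\to\infty$ forces the points $p_k$ to leave every compact set, so $p_k\to\infty$ by completeness of $g$. This produces the required sequence of orbits, all closed with a common period.

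It remains to bound the lengths, which is where the identity (\ref{scalar-curvature-identity}) enters. Since $\nabla f(o)=0$, the constant there equals $R(o)$, so $|\nabla f|^2=R(o)-R\le R(o)$ everywhere because $R\ge0$. As $J$ is an isometry, $|J\nabla f|=|\nabla f|\le\sqrt{R(o)}$, and therefore
$$L(\gamma_k)=\int_0^T|J\nabla f|(\gamma_k(s))\,ds\le \sqrt{R(o)}\,T=\frac{2\pi}{h_1}\sqrt{R(o)},$$
a bound independent of $k$.

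The substantive point to get right is the last estimate: although the $\gamma_k$ sit on circles of coordinate-radius $|z_1^{(k)}|\to\infty$ and so appear to grow without bound, their \emph{arclength} stays controlled. This works precisely because the flow-time $T=2\pi/h_1$ needed to traverse each orbit is a fixed constant, while the flow speed $|J\nabla f|=|\nabla f|$ is globally bounded by $\sqrt{R(o)}$; thus arclength $=$ speed $\times$ time is uniform in $k$. The only secondary technicality is to confirm that $z$ is proper, so that escaping to infinity in the coordinates coincides with escaping to infinity in $M$; this follows from $z$ being a global coordinate diffeomorphism onto $\mathbb C^n$ together with completeness of $g$.
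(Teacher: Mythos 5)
Your argument is correct and follows essentially the same route as the paper: both use Bryant's Poincar\'e coordinates to linearize $J\nabla f$, pick $p_k$ on a fixed coordinate axis going to infinity so the orbits are circles of common flow-period $2\pi/h_1$, and bound the arclength by the identity $R+|\nabla f|^2=R(o)$ giving $|J\nabla f|\le\sqrt{R(o)}$. The only cosmetic difference is that you integrate the flow in complex form $z_1(s)=z_1^{(k)}e^{\sqrt{-1}h_1s}$ while the paper writes the real components $(k\cos(h_1s),k\sin(h_1s))$; you might also note explicitly, as the paper does, that the equilibrium point required by Bryant's theorem exists by Theorem 1.1 of \cite{DZ} under the stated curvature hypotheses.
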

\begin{proof}
By Theorem 1.1 in \cite{DZ}, there exists a unique equilibrium point on $M$. According to Theorem \ref{Bryant},
we see that
there exist global Poincar\'{e} coordinates $(z_{1},\cdots,z_{n})$ on $M$ such that $Z=\frac{\nabla f-\sqrt{-1}J\nabla f}{2}$ satisfies (\ref{Bryant-eq}).

Let $z_{i}=x_{i}+\sqrt{-1}y_{i}$. Then
$$J\nabla f=\sum_{i=1}^{n}h_{i}(x_{i}\frac{\partial}{\partial y_{i}}-y_{i}\frac{\partial}{\partial x_{i}}).$$
Choose points $p_{k}=(k,0,\cdots,0,0\cdots,0)\in M$. Then the integral curves of $J\nabla f$ starting from $p_{k}$ are given by
$$\gamma_{k}(s)=(k\cos(h_{1}s),k\sin(h_{1}s),0,\cdots,0,0\cdots,0).$$
Clearly, these curves are all closed with period time $\frac{2\pi}{h_{1}}$.
By the identity (\ref{scalar-curvature-identity}),
$$|\gamma'_k(s)|=|\nabla f|(\gamma_k(s))\leq A_0^{\frac{1}{2}},~{\rm as}~k\to\infty.$$
Hence the length $l_{k}$ of $\gamma_k(s)$ has a uniformly upper bound.
\begin{align}\label{length-gamma-1}l_{k}=\int_{0}^{\frac{2\pi}{h_{1}}}|\gamma'_k(s)|{\rm d}s\leq A_0^{\frac{1}{2}}\frac{2\pi}{h_{1}}.
\end{align}
\end{proof}

In the remaining of this section, we use the estimates in Section 4 to get a lower bound of $l_k$ to derive a contradiction. First, we need the following fundamental lemma.

\begin{lem}\label{lower bound of length}
Let $B(p,r)$ be a geodesic ball with radius $r$ centered at $p$ in a Riemannian manifold $(M,g)$, and $X$ a smooth vector field such that $|X|_{g}(x)\ge C_{0}$ and $|\nabla X|(x)\leq C$ for any $x\in B(p,r)$, where $C$ is a positive constant independent of $x\in B(p,r)$. Let $\gamma(s)$ be the integral curve of $X$ starting from $p$ and we assume that $\gamma(s)$ stays in $B(p,r)$ for all $s\in[0,\infty)$. Then
there exists $c_{0}>0$, which depend only on $r,C,C_{0}$ and the metric $g$ on $B(p,r)$, such that $\gamma(s)$ is away from $p$ for all $s\in(0,c_{0}]$ and
\begin{align}\label{length-gamma-2}{\rm Length}(\gamma(s))\ge c_0 C_0.\end{align}

\end{lem}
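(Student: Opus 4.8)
The plan is to prove a quantitative lower bound for the length of an integral curve of a vector field $X$ that is bounded below in norm and has bounded covariant derivative. First I would set up the problem in terms of the time parameter $s$: since $\gamma'(s)=X(\gamma(s))$ and $|X|_g\ge C_0$ on $B(p,r)$, the curve moves with speed at least $C_0$, so on any parameter interval $[0,s]$ the intrinsic length is $\int_0^s |\gamma'(\tau)|\,\mathrm{d}\tau\ge C_0 s$. The geometric content is therefore not the length estimate itself (that is immediate) but rather showing that $\gamma(s)$ genuinely moves \emph{away} from $p$ in the metric $g$ for a definite amount of parameter time $c_0$, so that the distance $\rho(p,\gamma(s))$ does not stay pinned near zero even though $\gamma$ could in principle curl back.

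The key step is to control how fast $X$ can rotate or decelerate along $\gamma$, using the bound $|\nabla X|\le C$. I would estimate the rate of change of the \emph{direction} of motion: differentiating $\gamma'(s)=X(\gamma(s))$ gives the acceleration $\tfrac{D}{\mathrm{d}s}\gamma'(s)=\nabla_{X}X(\gamma(s))$, whose norm is bounded by $|\nabla X|\,|X|\le C\cdot \sup_{B(p,r)}|X|$. Since $|X|$ is itself bounded on the compact-closure ball $B(p,r)$ (say by some $C_1$ depending only on the data on $B(p,r)$), the geodesic curvature of $\gamma$ is bounded by $C C_1$. Combined with the uniform lower speed bound $C_0$, this means that over a short parameter interval the unit tangent $\gamma'(s)/|\gamma'(s)|$ cannot turn by more than a controlled angle. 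A comparison-geometry or direct Taylor-expansion argument then shows that for $s$ small enough, depending only on $r$, $C$, $C_0$ and the metric on $B(p,r)$, the displacement in normal coordinates (or equivalently the distance $\rho(p,\gamma(s))$) grows essentially linearly in $s$ and in particular stays positive.

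Concretely, I would fix geodesic normal coordinates centered at $p$ on $B(p,r)$, in which $\rho(p,\gamma(s))$ is comparable to the Euclidean norm of the coordinate representation of $\gamma(s)$, and estimate $\tfrac{\mathrm{d}}{\mathrm{d}s}\rho(p,\gamma(s))$ from below for small $s$ using the speed and acceleration bounds just obtained. Choosing $c_0>0$ small enough that the accumulated turning is less than, say, $\pi/4$ over $[0,c_0]$ guarantees $\gamma(s)\ne p$ for all $s\in(0,c_0]$. The length bound $\mathrm{Length}(\gamma|_{[0,c_0]})\ge c_0 C_0$ then follows immediately from the speed estimate, giving \eqref{length-gamma-2}.

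The main obstacle is the potential curling of $\gamma$: a priori the curve could return near $p$, so the genuine work is converting the pointwise bound $|\nabla X|\le C$ into a uniform-in-$s$ control on the geodesic curvature and hence on the turning of the tangent direction, and then translating that into a lower bound on $\rho(p,\gamma(s))$ valid on a parameter interval whose length $c_0$ is independent of the particular curve (depending only on $r$, $C$, $C_0$ and the metric on $B(p,r)$). Making the constant $c_0$ genuinely uniform — so that it can later be applied simultaneously to the whole sequence $\gamma_k$ produced in Corollary \ref{special sequence of point} after rescaling — is the delicate point, and it is where the compactness of $\overline{B(p,r)}$ and the explicit dependence on the bounds must be tracked carefully.
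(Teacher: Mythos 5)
Your proposal is correct and follows essentially the same route as the paper: pass to normal coordinates at $p$, Taylor-expand the coordinates of $\gamma(s)$, bound the second derivative by $|\nabla_X X|$ plus a Christoffel-symbol term controlled by the metric on $B(p,r)$, and choose $c_0$ small enough (depending only on $r$, $C$, $C_0$ and the metric) that the linear term $sX(p)$ dominates, so $\gamma(s)\ne p$ on $(0,c_0]$ while the length bound follows from $|\gamma'|\ge C_0$. The paper simply tracks a single coordinate $x_k$ with $|X_k(p)|\ge C_0/\sqrt{n}$ rather than the turning of the unit tangent, but the content is identical.
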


\begin{proof}
Suppose that $r_{p}$ is the injective radius at $p\in M$. Set $r_{0}=\min\{r_{p},\frac{r}{2}\}$. By the exponential map, we can choose a normal coordinate $(x_{1},\cdots,x_{n})$ on $B(p,r_{0})$.
Let $X(p)=(X_{1}(p),X_{2}(p),\cdots,X_{n}(p))$. We may assume that $|X_{k}(p)|=\max_{1\le i\le n}|X_{i}(p)|$.
Then $|X_{k}(p)|\geq\frac{C_{0}}{\sqrt{n}}$.  Note that
\begin{align}
x_{k}(\gamma(s))-x_{k}(\gamma(0))=&\frac{dx_{k}(\gamma(0))}{ds}\cdot s+\frac{d^{2}x_{k}(\gamma(\theta s)) }{ds^{2}}\cdot s^{2}\notag\\
=&s(X_{k}(p)+\frac{d^{2}x_{k}(\gamma(\theta s))}{ds^{2}}\cdot s)\notag
\end{align}
and
\begin{align}
|\frac{d^{2}x_{k}(\gamma( s))}{ds^{2}}|=&|\nabla_{X}X-\Gamma^{k}_{ij}\frac{dx_{i}(\gamma(s)}{ds}\frac{dx_{j}(\gamma(s))}{ds}|\notag\\
\le& C_{1}|\nabla_{X}X|_{g}+C_{2}|X(\gamma(s))|_{g}\cdot \max_{1\le i,j,k\le n,x\in B(x,r)}|\Gamma^{k}_{ij}(x)|\notag\\
\le& C_{3},\notag
\end{align}
where $C_{3}$ is independent of $s\in [0,r_{0}]$. Choose $c_{0}=\min \{r_{0},\frac{C_{0}}{2\sqrt{n}C_{3}}\}$. Then
\begin{align}
|X_{k}(p)+\frac{d^{2}x_{k}(\gamma(\theta s))}{ds^{2}}\cdot s|\ge \frac{1}{2}|X_{k}(p)|>0,~\forall~s\in (0,c_{0}].
\end{align}
It follows
\begin{align}
|x_{k}(\gamma(s))-x_{k}(\gamma(0))|\geq \frac{1}{2}s|X_{k}(p)|>0,~\forall~s\in (0,c_{0}].\notag
\end{align}
(\ref{length-gamma-2}) is clear.
Hence, the lemma is proved.
\end{proof}

By Lemma \ref{lower bound of length}, we prove

\begin{lem}\label{prop-contradiction argument}
Let $(M,g,f)$ be an $n$-dimensional $\kappa$-noncollapsed steady K\"ahler-Ricci soliton with nonnegative sectional curvature and positive Ricci curvature. Let $p_{k}$ be the sequence of points constructed in Corollary \ref{special sequence of point}. Then, there exists a positive constant $C$ such that $R(p_{k})>C$, where $C$ is independent of $p_{k}$.
\end{lem}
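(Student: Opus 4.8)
The plan is to argue by contradiction: suppose that after passing to a subsequence $R(p_k)\to 0$, and derive that the length of $\gamma_k$ measured in the rescaled metric $\hat g_k=R(p_k)g$ must simultaneously tend to zero and stay bounded below by a fixed positive constant.

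First I would fix the rescaling. Positive Ricci curvature gives $R>0$ everywhere, and Proposition \ref{curvature-lower-estimate} gives $\rho(o,p_k)^2R(p_k)\ge C\rho(o,p_k)\to\infty$, so Theorem \ref{soliton-1} applies to the rescaled flows $(M,\hat g_k(t);p_k)$: they converge in the Cheeger-Gromov topology to a splitting limit, and hence on each ball $B(p_k,r)$ the metrics $\hat g_k$ have uniformly bounded geometry (curvature and its derivatives bounded, injectivity radius bounded below by $\kappa$-noncollapsing). Because $\gamma_k$ is a fixed closed curve, its $\hat g_k$-length is $\hat l_k=\sqrt{R(p_k)}\,l_k$, and by (\ref{length-gamma-1}) together with the contradiction hypothesis, $\hat l_k\le\sqrt{R(p_k)}\,A_0^{1/2}\,\tfrac{2\pi}{h_1}\to 0$.

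The core step is to apply Lemma \ref{lower bound of length} in the metric $\hat g_k$ to the rescaled field $\hat X_k=R(p_k)^{-1/2}J\nabla f$, whose integral curve through $p_k$ is $\gamma_k$ up to reparametrization. Two uniform estimates are needed. For the norm, $|\hat X_k|_{\hat g_k}=|J\nabla f|_g=|\nabla f|_g=\sqrt{A_0-R}$ by (\ref{scalar-curvature-identity}); since the local curvature estimate gives $R\le C\,R(p_k)\to 0$ on $B(p_k,r)$, we obtain $|\hat X_k|_{\hat g_k}\ge C_0:=\sqrt{A_0/2}$ there. For the covariant derivative, $J$ is parallel and the steady equation (\ref{soliton-complex}) gives $\nabla(J\nabla f)=J\,{\rm Hess}\,f$ with $|{\rm Hess}\,f|_g\le C|{\rm Ric}|_g\le C'R$; as the norm of a $(1,1)$-tensor is unchanged under constant rescaling, $|\hat\nabla\hat X_k|_{\hat g_k}=R(p_k)^{-1/2}|{\rm Hess}\,f|_g\le C'R(p_k)^{1/2}$, which is bounded. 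Since $\hat l_k\to 0$, the curve $\gamma_k$ stays inside $B(p_k,r)$, so the lemma produces a $c_0>0$ with $\gamma_k$ away from $p_k$ on $(0,c_0]$; as $\gamma_k$ is closed this forces its period to exceed $c_0$, whence $\hat l_k\ge c_0C_0>0$. The uniform bounded geometry of $\hat g_k$ makes $c_0$ independent of $k$. This contradicts $\hat l_k\to 0$, so $R(p_k)\not\to 0$, i.e. $R(p_k)>C$ for a uniform $C>0$.

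The main obstacle is making every constant in Lemma \ref{lower bound of length} uniform in $k$. This rests on (i) controlling ${\rm Hess}\,f$ through the soliton equation and the bound $|{\rm Ric}|\le C R$ (valid under nonnegative sectional/bisectional curvature), which is what keeps $|\hat\nabla\hat X_k|$ bounded after rescaling; and (ii) extracting a uniform lower bound for the injectivity radius together with uniform $C^2$-control of $\hat g_k$ on $B(p_k,r)$---this is precisely the bounded geometry furnished by the $C^\infty$ Cheeger-Gromov convergence of Theorem \ref{soliton-1} and by $\kappa$-noncollapsing---so that the $c_0$ produced by the lemma does not degenerate as $k\to\infty$.
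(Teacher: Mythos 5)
Your proposal is correct and follows essentially the same route as the paper: contradiction via $R(p_k)\to 0$, rescaling by $R(p_k)$ and invoking Theorem \ref{soliton-1}, the vector field $R(p_k)^{-1/2}J\nabla f$ with norm near $\sqrt{A_0}$ and controlled covariant derivative, Lemma \ref{lower bound of length} for a uniform length lower bound, against the upper bound $\sqrt{R(p_k)}\,l_k\to 0$ from (\ref{length-gamma-1}). The only cosmetic difference is that the paper transports $\gamma_k$ to the fixed limit metric $g_\infty(0)$ via the Cheeger--Gromov diffeomorphisms before applying the length lemma, whereas you apply it directly in the metrics $\hat g_k$ and justify uniformity of the constants by bounded geometry; both are valid.
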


\begin{proof}
We use the contradiction argument and suppose $R(p_{k})\rightarrow0$ as $k\rightarrow\infty$. Let $g_{k}(t)=R(p_{k})g(R^{-1}(p_{k})t)$. Then by Theorem \ref{soliton-1}, the sequence of Ricci flows $(M,g_{k}(t);p_{k})$ converge subsequently to a limit flow $(M_\infty, g_\infty(\cdot, t);$
\newline $ p_\infty)$. Fix $r>A_0^{\frac{1}{2}}\frac{2\pi}{h_{1}}$ (cf. Corollary \ref{special sequence of point}). Applying Lemma \ref{lem-curvature bound} to flows $(M, g_{k}(t), p_k)$, there is a positive constant $C=C(r)$ independent of $k$ such that
\begin{equation}\label{eq:5-2}
\frac{R(x)}{R(p_{k})}\leq C, ~\forall ~x\in B_{g_{k}(0)}(p_{k},r).
\end{equation}
Thus
\begin{equation}\label{decay-condition}
R(x)\rightarrow0, ~\forall ~x\in B_{g_{k}(0)}(p_{k},r).
\end{equation}
Moreover, the convergence is uniform for $x\in B_{g_{k}(0)}(p_{k},r)$.

Let $X_{(k)}=R(p_{k})^{-\frac{1}{2}}J\nabla f$. Then
$$|X_{(k)}|^{2}_{g_{k}(0)}(x)=|\nabla f|^{2}(x)=A_{0}-R(x).$$
By the identity (\ref{scalar-curvature-identity}) together with the condition (\ref{decay-condition}),
it follows
$$\lim_{k\rightarrow\infty}\sup_{B_{g_{k}(0)}(p_{k},r) }||X_{(k)}|_{g_{k}(0)}-\sqrt{A_{0}}|=0.$$
By Shi's higher order estimate \cite{S1} and soliton equation $(\ref{soliton-complex})$, we also get
$$\sup_{ B_{g_{k}(0)}(p_{k},r)}|\tilde \nabla^{m}_{(g_{k}(0))}X_{(k)}|_{g_{k}(0)}\leq C(n)\sup_{ B_{g_{k}(0)}(p_{k},r)}|\tilde \nabla^{m-1}_{(g_{k}(0))}{\rm Ric}(g_{k}(0))|_{g_{k}(0)}\le C_1,$$
where $\tilde \nabla$ denotes the connection respect to the rescaled metric $g_{k}(0)$.
As a consequence, the restricted vector field $X_{k}$ on $B_{g_{k}(0)}(p_{k},r)$ converges to
a smooth vector field $X_{\infty}$ on $ B_{g_{\infty}(0)}(p_{\infty},r)\subset M_\infty$ in $C^\infty$-topology.
On the other hand,
\begin{align}\label{eq:5-1}
\tilde \nabla_{(g_{k}(0))J\nabla f}(J\nabla f)=
\nabla_{J\nabla f}(J\nabla f)=-\nabla_{\nabla f}(\nabla f)=\nabla R.
\end{align}
Then
$$|\tilde \nabla_{(g_{k}(0))X_{(k)}}X_{(k)}|_{g_{k}(0)}=\frac{|\nabla R|(x)}{R^{\frac{1}{2}}(p_{k})}.$$
Note by (\ref{eq:5-2}) and Shi's higher order estimate,
\begin{align}\label{convergence:1}
\frac{|\nabla R|(x)}{R^{\frac{3}{2}}(p_{k})}\le C^{\prime},~\forall ~x\in B_{g_{k}(0)}(p_{k},r).
\end{align}
Thus we get
\begin{align}\label{convergence:2}
|\tilde\nabla_{(g_{\infty}(0))X_{(\infty)}}X_{(\infty)}|_{ g_{\infty}(0)} =\lim_{k\rightarrow\infty}|\tilde\nabla_{(g_{k}(0))X_{(k)}}X_{(k)}|_{g_{k}(0)}=0,
\end{align}
where the convergence is uniform on $B_{g_{k}(0)}(p_{k},r)$.

By the convergence, there are diffeomorphism $\Phi_{k}:B_{g_{k}(0)}(p_k, r) \rightarrow M_{\infty}$ such that $\Phi_{k}(p_{k})=p_{\infty}$,
$\Phi_{k}(g_{k}(0))\rightarrow g_{\infty}(0)$ and
$$(\Phi_{k})_{\ast}(X_{k})\rightarrow X_{\infty},~as~k\rightarrow\infty.$$
By (\ref{convergence:2}), it follows that
$$|\tilde \nabla_{(g_{\infty}(0))\overline{X}_{(k)}}\overline{X}_{(k)}|_{ g_{\infty}(0)}\rightarrow0,~as~k\rightarrow\infty,$$
where $\overline{X}_{(k)}=(\Phi_{k})_{\ast}(X_{k})$.
Let $\overline{\gamma}_{k}=\Phi_k(\gamma_k)$. Clearly $\overline{\gamma}_{k}\subset B_{g_{\infty}(0)}(p_{\infty},r)$ as long as $k$ is sufficiently large, since $\gamma_{k}\subset B_{g_k(0)}(p_{k},r)$ by the choice of $r$. Then we can apply Lemma \ref{lower bound of length} to $\overline{\gamma}_{k}$ to see that there are constants $c_0, A>0$, which depend only metric $g_{\infty}(0)$ on $B_{g_{\infty}(0)}(p_{\infty},r)$ such that
$${\rm Length}(\overline{\gamma}_{k}, g_{\infty}(0))\ge A$$
and $d(\overline{\gamma}_{k}(s),p_{\infty})>0$ for all $s\in(0, c_0]$. It follows
$${\rm Length}(\gamma_{k}, g_{k}(0))\ge \frac{1}{2}{\rm Length}(\overline{\gamma}_{k},g_{\infty}(0))\ge \frac{1}{2}A$$
and $d(\gamma_{k}(s),p_{k})>0$ for all $s\in(0,c_0]$, as long as $k$ is sufficiently large.
On the other hand, by (\ref{length-gamma-1}), we have
$${\rm Length}(\gamma_{k}, g_{k}(0) )\le\frac{2\pi}{h_{1}}A_{0}^{\frac{1}{2}}R(p_{k})^{\frac{1}{2}}\rightarrow0, ~{\rm as}~ k\to\infty.$$
Hence we get a contradiction! The lemma is proved.
\end{proof}

Combining Lemma \ref{prop-contradiction argument} and Corollary \ref{cor-of-splitting-theorem} in Section 4, we prove Theorem \ref{theorem-eternal-flow} in the surfaces case.

\begin{prop}\label{flat-theorem}
Let $(M,g,f)$ be an $2$-dimensional $\kappa$-noncollapsed steady K\"ahler-Ricci soliton with nonnegative sectional curvature. Then $(M,g)$ is flat.
\end{prop}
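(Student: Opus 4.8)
The plan is to argue by contradiction: assume $(M,g,f)$ is not flat and derive a contradiction. Since nonnegative sectional curvature implies nonnegative bisectional curvature, I would first apply the strong maximum principle of Bando--Mok to the eternal K\"ahler-Ricci flow $g(t)=\phi_t^*g$. On the simply connected universal cover this forces a K\"ahler de Rham splitting
$$\widetilde M=\mathbb{C}^{k}\times M',\qquad 0\le k\le 2,$$
in which the flat factor $\mathbb{C}^{k}$ is the parallel null distribution of the Ricci form and the remaining factor $M'$ has positive bisectional, hence positive Ricci, curvature. As $M$ is non-flat we have $k\le 1$. One checks that both the steady soliton structure and the $\kappa$-noncollapsing descend: the potential $f$ restricts to a steady gradient soliton potential on $M'$, and since the universal covering is a local isometry under which geodesic balls project onto geodesic balls, $\kappa$-noncollapsing is inherited by $\widetilde M$ and by each factor.

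Consider first the case $k=0$, so that $M$ itself has positive Ricci and nonnegative sectional curvature. By \cite{DZ} it then admits a unique equilibrium point, so Corollary \ref{special sequence of point} produces the sequence $p_{k}\to\infty$ of closed integral curves of $J\nabla f$, and Lemma \ref{prop-contradiction argument} applies verbatim to give a uniform lower bound $R(p_{k})\ge C>0$. On the other hand the hypotheses of Theorem \ref{soliton-1} hold, so the blow-down splits as $N_{1}\times\mathbb{C}\mathbb{P}^{1}$; re-running the proof of Corollary \ref{cor-of-splitting-theorem}, which uses only this conclusion of Theorem \ref{soliton-1} and not positive sectional curvature, yields the upper bound $R(x)\le C_{2}/\rho(x)$. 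Since $\rho(p_{k})\to\infty$ this forces $R(p_{k})\to 0$, contradicting $R(p_{k})\ge C$.

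In the remaining case $k=1$ the factor $M'$ is a complex curve, hence a complete $\kappa$-noncollapsed steady gradient K\"ahler-Ricci soliton on a surface with positive curvature. By Hamilton's classification of surface solitons the only such metric is the cigar, whose end is asymptotic to a cylinder of finite circumference and which is therefore $\kappa$-collapsed. This contradicts the $\kappa$-noncollapsing of $\widetilde M=\mathbb{C}\times M'$ established above. As the cases $k=0,1$ are both impossible and $k=2$ means $\widetilde M$ is flat, $(M,g)$ must be flat.

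I expect the main obstacle to be the splitting step: one must justify the de Rham decomposition for this noncompact eternal flow and verify that the steady soliton equation and the $\kappa$-noncollapsing bound genuinely pass to the irreducible factor $M'$. Once this is in place the $k=0$ analysis is essentially a repackaging of Lemma \ref{prop-contradiction argument} together with Corollary \ref{cor-of-splitting-theorem} (modulo the observation that the latter only needs Theorem \ref{soliton-1}), while the $k=1$ case reduces to the classical fact that the cigar soliton is collapsed.
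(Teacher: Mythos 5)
Your proof is correct and follows essentially the same route as the paper: after reducing to the positive-Ricci case (where the paper simply invokes Cao's dimension reduction, you make the splitting of the universal cover and the exclusion of the collapsed $\mathbb{C}\times\text{cigar}$ factor explicit), the contradiction is exactly the paper's, namely the uniform lower bound $R(p_k)\ge C$ of Lemma \ref{prop-contradiction argument} against the decay supplied by Corollary \ref{cor-of-splitting-theorem}. The only cosmetic difference is that you quote the spatial estimate $R(x)\le C_2/\rho(x)$, whereas the paper uses the equivalent temporal statement $R(p,t_k)|t_k|\to 1$ along the $-\nabla f$-orbit through the Poincar\'e-coordinate points $p_k$.
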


\begin{proof}
If $(M,g)$ is compact, then applying the maximum principle to the identity $$\Delta f+|\nabla f|^{2}=A_{0},$$ it is easy to see that $f$ is constant of $g$ and so
$(M,g)$ is flat. If the soliton is not flat, then we may assume that $(M,g)$ is a $\kappa$-noncollapsed, noncompact steady K\"ahler-Ricci soliton with positive Ricci curvature by the Cao's dimension reduction theorem in \cite{Ca3}.

Let $(z_1,..., z_n)$ be the Poincar\'{e} coordinates  as in Theorem \ref{Bryant} and $\phi(t)$ be a family of diffeomorphisms generated by $-2{\rm Re}(Z)=-\nabla f$. Let $p=(1,0,0,\cdots,0)$. Then one can check $z_{i}(\phi_{t}(p))=e^{-h_{i}t}z_{i}(p)$ (cf. Theorem 3 in \cite{Bry}). Namely, $Z(\phi_{t}(p))=(e^{-h_{1}t},0,\cdots,0)$.
For  $p_{k}=(k,0,\cdots,0)$   in Corollary \ref{special sequence of point},  we see  that  $p_{k}=\phi_{t_{k}}(p)$ and $t_{k}=-\frac{\ln k}{h_{1}}$. By Lemma \ref{prop-contradiction argument}, we have $R(p_{k})>C$ for some positive constant $C$ independent of $p_{k}$. On the other hand, by   (\ref {decay-scalar-t}) in  Corollary \ref{cor-of-splitting-theorem} we have
$$R(p_k)\frac{\ln k}{h_{1}}=R(p,t_{k})|t_{k}|\rightarrow1~ as ~k\rightarrow\infty.$$
Hence, we get a contradiction. The proposition is proved.
\end{proof}

Now, we prove Theorem \ref{main-theorem-nonexistence-1}.

\begin{proof} [Proof of Theorem \ref{main-theorem-nonexistence-1}]
We prove it by induction on the complex dimension of $M$. By Proposition \ref{flat-theorem}, we suppose that there is no $l$-dimensional
$\kappa$-noncollapsed steady K\"{a}hler-Ricci soliton with nonnegative sectional curvature and positive Ricci curvature for all $l<n$. To generalize the argument in the proof of Proposition \ref{flat-theorem} to higher dimensions, we only need to find a sequence of $R(p_{k})$ as in Lemma \ref{prop-contradiction argument} such that $\lim_{k\rightarrow\infty}R(p_{k})\rightarrow0$.  In fact, we  prove

\begin{claim}\label{key-lemma} Let $o$ be the unique equilibrium of $M$. Then, under the induction hypothesis, for any fixed $p\in M\setminus\{o\}$,
$R(p,-t)\to 0$ as $t\to\infty.$
\end{claim}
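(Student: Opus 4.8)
The plan is to prove the claim by contradiction combined with the structure theorem for blow-down limits, Theorem \ref{soliton-1}. Suppose that for some fixed $p \in M \setminus \{o\}$ we do \emph{not} have $R(p,-t) \to 0$; since $R(p,-t)$ is monotone (as $\frac{\partial}{\partial t}R(\phi_t(p)) = {\rm Ric}(\nabla f, \overline{\nabla f}) \ge 0$, so $R$ is nondecreasing in forward time and hence nonincreasing as $t \to -\infty$), the limit $\lim_{t\to\infty} R(p,-t)$ exists and the negation of the claim is that this limit is some $a > 0$. Writing $p_k = \phi_{t_k}(p)$ for $t_k \to -\infty$, this means $R(p_k) \to a > 0$ along the integral curve. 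The goal is to show that this forces $M$ to contain a lower-dimensional noncollapsed steady soliton summand with positive Ricci curvature, contradicting the induction hypothesis.

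First I would apply Theorem \ref{soliton-1} to the sequence $p_k \to \infty$. Because $R(p_k)$ stays bounded away from both $0$ and $\infty$, the rescaling factors $R(p_k)$ are uniformly comparable to $1$, so the rescaled flows $(M, R(p_k) g(R^{-1}(p_k)t); p_k)$ are, up to a bounded rescaling, essentially the unrescaled flows $(M, g(t); p_k)$ themselves. By Theorem \ref{soliton-1} this sequence converges in the Cheeger--Gromov topology to a splitting limit $(N_1 \times N_2, \widetilde g(t))$ with $\widetilde g(t) = {\rm d}z \otimes {\rm d}\overline{z} + g_{N_2}(t)$, where $N_1$ is flat of complex dimension one and $(N_2, g_{N_2}(t))$ is a pseudo $\kappa$-solution of complex dimension $n-1$. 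The key point I would then extract is that, because the scalar curvature at the basepoints $R(p_k)$ does \emph{not} decay to zero, the limit $N_2$ factor is non-flat; this is what produces a genuine lower-dimensional object to which the induction hypothesis can be brought to bear.

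The crux of the argument is to convert the pseudo $\kappa$-solution factor $N_2$ into an actual lower-dimensional steady Kähler--Ricci soliton. Here I expect to invoke the soliton structure along the integral curve: the points $p_k = \phi_{t_k}(p)$ all lie on a single integral curve of $-\nabla f$, and the self-similarity of the soliton flow identifies the rescaled flows around successive $p_k$ with one another via the biholomorphisms $\phi_t$. This rigidity should force the limiting splitting to be compatible with the soliton structure, so that $N_2$ inherits a steady Kähler--Ricci soliton structure with nonnegative sectional curvature. Since $R$ does not vanish in the limit, $N_2$ has positive Ricci curvature (after a dimension-reduction step as in Cao's theorem \cite{Ca3} if necessary, to discard any residual flat factor), and it remains $\kappa$-noncollapsed as a Cheeger--Gromov limit of $\kappa$-noncollapsed spaces. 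This contradicts the induction hypothesis that no such $(n-1)$-dimensional soliton exists.

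The main obstacle, and the step requiring the most care, is precisely this last transfer: showing that the blow-down limit factor $N_2$ is not merely a pseudo $\kappa$-solution but genuinely a \emph{steady soliton} of the right dimension and curvature sign. The difficulty is that Theorem \ref{soliton-1} only hands us an ancient pseudo $\kappa$-solution, whereas the induction hypothesis is a statement about steady solitons. Bridging this gap relies on the soliton's self-similarity: the vector field $\nabla f$ generates the flow itself, so the time-translation structure of the limit must come from a limiting gradient vector field, forcing $(N_2, g_{N_2})$ to be soliton-like rather than a generic ancient solution. Establishing that this limiting structure is nontrivial (equivalently, that $N_2$ is non-flat, which is where the hypothesis $\lim R(p_k) = a > 0$ enters decisively) and that positivity of Ricci curvature survives in the limit is the heart of the matter; the splitting off of the flat $N_1$ factor and the noncollapsing are comparatively routine consequences of the results already established.
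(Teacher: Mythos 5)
Your overall strategy --- argue by contradiction from $\lim_{t\to\infty}R(p,-t)=a>0$, blow down along the integral curve $p_k=\phi_{t_k}(p)$, split off a flat factor, and contradict the induction hypothesis with an $(n-1)$-dimensional steady soliton --- is the same as the paper's. But the step you yourself flag as ``the heart of the matter,'' namely upgrading the limit from an ancient pseudo $\kappa$-solution to a genuine \emph{steady gradient soliton}, is exactly the step your proposal does not carry out, and the heuristic you offer for it (``self-similarity \dots\ forces the limiting splitting to be compatible with the soliton structure'') is not the mechanism that actually works. Self-similarity of $(M,g,f)$ only relates the rescaled flows around successive $p_k$ by diffeomorphisms composed with (here, asymptotically constant) rescalings; it does not by itself endow the Cheeger--Gromov limit with a potential function, so the induction hypothesis --- which is a statement about steady solitons, not about ancient or eternal flows --- cannot yet be applied.

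The paper's mechanism is the equality case of the Harnack inequality. Because the rescaling factors $R(p,\tau_i)$ converge to $C>0$, the limit is an \emph{eternal} flow on $t\in(-\infty,\infty)$, and for every fixed $t$ one has $\tau_i+R^{-1}(p,\tau_i)t\to-\infty$, hence
$R_\infty(p_\infty,t)=\lim_i R\bigl(p,\tau_i+R^{-1}(p,\tau_i)t\bigr)/R(p,\tau_i)=1$ for \emph{all} $t$, so that $\frac{\partial}{\partial t}R_\infty(p_\infty,t)\equiv0$. This realizes the equality case of (\ref{eq:Harnack-complex}) at $(p_\infty,t)$, and Cao's rigidity theorem (Theorem 4.1 of \cite{Ca1}, the K\"ahler analogue of Hamilton's result that an eternal solution attaining $\sup R$ is a steady soliton) then shows the limit is a gradient steady K\"ahler--Ricci soliton; dimension reduction \cite{Ca3} gives positive Ricci curvature. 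Only after that does the paper split off the line, using $R(p,\tau_i)\,d^2(o,p_{\tau_i})\to\infty$ as in the proof of Theorem \ref{soliton-1}, to exhibit the $(n-1)$-dimensional steady soliton factor contradicting the induction hypothesis. Without the observation that $R_\infty(p_\infty,\cdot)$ is constant in time and the appeal to the Harnack equality case, your argument stops at a pseudo $\kappa$-solution and never reaches an object to which the induction hypothesis applies.
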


\begin{proof}
By Harnack inequality, we have $\frac{\partial}{\partial t}R(p,t)\ge0$.  Then $\lim_{t\rightarrow-\infty}R(p,t)$ exists   since $R(p,t)\geq0$.  Thus  there exist a point $p\in M$ such that
\begin{align}\label{eq:inf limit of R}
\lim_{t\rightarrow-\infty}R(p,t)=C>0,
\end{align}
if the claim is not true.
Consider the sequence $(M,g_{\tau}(t); p_{\tau})$, where $g_{\tau}=R(p,\tau)g(R^{-1}(p,\tau)t)$
 and $p_\tau=\phi_\tau(p).$
Then the curvature of $(M,g_{\tau}(t))$ is uniformly bounded. Note that $(M,g_{\tau}(t))$ is also $\kappa$-noncollapsed. Thus there is a subsequence $(M,g_{\tau_{i}}(t); p_{\tau_{i}})$ which  converges  to a geometric limit $(M_{\infty},g_{\infty}(t); p_{\infty})$ with  $t\in(-\infty,\infty)$.   For any fixed $t\in(-\infty,+\infty)$, it is clear  by $(\ref{eq:inf limit of R})$,
$$\lim_{\tau_{i}\rightarrow-\infty}(\tau_i+R^{-1}(p,\tau_{i})t)=-\infty.$$
Again by  $(\ref{eq:inf limit of R})$, we get
$$\lim_{\tau_{i}\rightarrow-\infty}R(p_{\tau_{i}},  R^{-1}(p,\tau_{i})t)=\lim_{\tau_{i}\rightarrow-\infty}R(p,\tau_i+R^{-1}(p,\tau_{i})t)=C.$$
Hence
\begin{align}\label{eq:constant limit}
R_{\infty}(p_{\infty},t)=\lim_{\tau_{i}\rightarrow-\infty}\frac{ R(  p_{\tau_{i}},  R^{-1}(p,\tau_{i})t)}{R(p,\tau_{i})}=1.
\end{align}
and consequently,
\begin{equation}\label{eq:property of soliton}
\frac{\partial}{\partial t}R_{\infty}(p_{\infty},t)\equiv0.
\end{equation}

By $(\ref{eq:constant limit})$, $(M_{\infty},g_{\infty}(t);p_{\infty})$ is not flat. Then by
Cao's dimension reduction theorem \cite{Ca3}, we may assume that $(M_{\infty},g_{\infty}(t))$ has positive Ricci curvature.
Since $(M_{\infty},g_{\infty}(t); p_{\infty})$ satisfies
the Harnack inequality (\ref{eq:Harnack-complex}) and there exists a point $p_{\infty}\in M_\infty$ which satisfies (\ref{eq:property of soliton}), following the argument in the proof of Theorem 4.1 in \cite{Ca1}, we
can further prove that $(M_{\infty},g_{\infty}(t);p_{\infty})$ is in fact a steady K\"{a}hler-Ricci soliton, which is $\kappa$-noncollapsed and has nonnegative sectional curvature and positive Ricci curvature.
On the other hand, by (\ref{eq:inf limit of R}), we see 
$$R(p,\tau_{i})d^{2}(o,p_{\tau_{i}})\rightarrow\infty,~as~\tau_{i}\rightarrow  \infty.$$
Then as in the proof of Theorem \ref{soliton-1}, $(M_{\infty},g_{\infty}(0))$ splits off $M_\infty=N_1\times N_2$ with
$g_\infty(0)=g_{N_1}+g_{N_{2}}$, where
$g_{N_1}={\rm d}z\otimes{\rm d}\overline{z}$ is a flat metric on $N_1$ and $g_{N_{2}}$ is a Riemannian metric on $N_2$. Consequently,
$g_{N_{2}}$ is an  $(n-1)$-dimension $\kappa$-noncollapsed  steady   K\"{a}hler-Ricci   soliton with nonnegative sectional curvature and positive  Ricci curvature.   It contradicts with the induction hypothesis. The claim is proved.

\end{proof}

As in  the proof of Proposition \ref{flat-theorem}, we  let $p=(1,0,0,\cdots,0)$.  Then $p_t=\phi_{t}(p)=(e^{-h_{1}t},0,\cdots,0)$. By Claim \ref{key-lemma}, $R(p_{t})\rightarrow0$ as $t\rightarrow -\infty$. On the other hand, $R(p_{t})=R(p,t)$ is increasing for $t\in(-\infty,+\infty)$ by the Harnack inequality. By Lemma \ref{prop-contradiction argument}, we see that
there is a positive constant $C>0$ such that $R(p_{t})\ge C$ as long as $-t$ sufficiently large. Therefore, we get a contradiction. The proof of Theorem \ref{main-theorem-nonexistence-1} is complete.
\end{proof}

By Theorem \ref{main-theorem-nonexistence-1} together with Cao's dimension reduction theorem \cite{Ca3}, we get immediately

\begin{cor}\label{rigidity-soliton}Any $n$-dimensional $\kappa$-noncollapsed steady K\"ahler-Ricci soliton with non-negative sectional curvature must be flat.
\end{cor}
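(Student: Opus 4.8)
The plan is to argue by contradiction and reduce, via Cao's dimension reduction theorem, to the positively Ricci-curved situation that Theorem \ref{main-theorem-nonexistence-1} already excludes. So suppose $(M,g,f)$ is a $\kappa$-noncollapsed steady K\"ahler-Ricci soliton with nonnegative sectional curvature which is \emph{not} flat, and aim to produce a contradiction.

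First I would dispose of the compact case exactly as in Proposition \ref{flat-theorem}: applying the maximum principle to the steady-soliton identity $\Delta f+|\nabla f|^{2}=A_{0}$ forces $f$ to be constant, hence $\mathrm{Ric}\equiv 0$, and a compact Ricci-flat K\"ahler manifold with nonnegative sectional curvature is flat. Thus I may assume $(M,g)$ is noncompact. Since nonnegative sectional curvature implies nonnegative holomorphic bisectional curvature, Cao's dimension reduction theorem \cite{Ca3} applies, and it splits off the flat directions: up to the appropriate isometric splitting, $(M,g)$ is a product of a flat K\"ahler factor with a $\kappa$-noncollapsed steady K\"ahler-Ricci soliton $(M',g',f')$ of complex dimension $n'\le n$ that carries \emph{positive} Ricci curvature and still has nonnegative sectional curvature (a factor of a product inherits the sectional curvatures of the corresponding planes). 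One should verify here that the noncollapsing constant and the soliton structure descend to $M'$: the former because geodesic balls of $M'$ sit, up to the flat factor, inside noncollapsed balls of $M$, and the latter because the potential $f$ restricts to a potential on the nonflat factor.

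It then remains to observe that $(M',g',f')$ cannot exist. Although Theorem \ref{main-theorem-nonexistence-1} is phrased under positive sectional curvature, its proof in Section 5---the induction resting on Proposition \ref{flat-theorem}, Lemma \ref{prop-contradiction argument} and Claim \ref{key-lemma}---uses only nonnegative sectional curvature together with positive Ricci curvature, and it rules out such solitons in every dimension; hence it excludes precisely the soliton $(M',g',f')$ produced above, regardless of whether $n'<n$ or $n'=n$. This contradiction shows that the original $(M,g)$ must have been flat. The step I expect to be the main obstacle is the reduction itself: ensuring that Cao's splitting genuinely yields a \emph{complete} $\kappa$-noncollapsed steady K\"ahler-Ricci soliton with \emph{positive Ricci} curvature (and not merely positive bisectional curvature or a soliton with only nonnegative Ricci), so that the hypotheses of the strengthened form of Theorem \ref{main-theorem-nonexistence-1} are met verbatim.
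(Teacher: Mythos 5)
Your proposal is correct and is essentially the paper's own argument: the paper derives the corollary ``immediately'' from Theorem \ref{main-theorem-nonexistence-1} together with Cao's dimension reduction theorem, which is exactly the reduction you carry out, and you rightly note that the induction in Section 5 actually proves the stronger statement for nonnegative sectional curvature with positive Ricci curvature, which is what the reduction requires. Your fleshing out of the compact case and of the splitting details only makes explicit what the paper leaves implicit.
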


At the end, we apply Corollary \ref{rigidity-soliton} to prove Theorem \ref{theorem-eternal-flow}.

\begin{proof}[Proof of Theorem \ref{theorem-eternal-flow}]
We only need to prove that $R(p,t)\equiv0$ for all $p\in M$ and $t\in(-\infty,+\infty)$. Suppose not. Fix any $p\in M$ such that $R(p,t^{\prime})>0$ for some $t^{\prime}\in(-\infty,+\infty)$. Let $\{t_{k}\}$ be a sequence of numbers which tends to infinity and $g_{k}(t)=g(t+t_{k})$. Since each flow $(M,g_{k}(t); p)$ is $\kappa$-noncollapsed and has uniformly bounded curvature, $(M,g_{k}(t);  p)$ converges to $(M_{\infty},g_{\infty}(t); p_{\infty})$ in the Cheeger-Gromov topology.
Note that the Harnack inequality (\ref{eq:Harnack-complex}) holds along flow $(M,g(t))$ and $(M,g(t))$ has uniformly bounded curvature. Thus $\frac{\partial}{\partial t}R(p,t)\ge0$ and $R(p,t)$ is uniformly bounded. It follows
$$R_{\infty}(p_{\infty},t_{1})=\lim_{t\rightarrow\infty}R(p,t+t_{1})=\lim_{t\rightarrow\infty}R(p,t+t_{2})=R_{\infty}(p_{\infty},t_{2}),$$
and consequently,
\begin{equation}\label{maximal-point}
\frac{\partial}{\partial t}R_{\infty}(p_{\infty},t)\equiv0.
\end{equation}

Since $R_{\infty}(p_{\infty},t)\ge R(p,t^{\prime})>0$, $(M_{\infty},g_{\infty}(t))$ is non-flat. By Cao's dimension reduction theorem in \cite{Ca3}, we may assume that $(M_{\infty},g_{\infty}(t))$ has positive Ricci curvature. Since $(M_{\infty},g_{\infty}(t);p_{\infty})$ satisfies
the Harnack inequality (\ref{eq:Harnack-complex}) and there exists a point $p_{\infty}\in M_\infty$ which satisfies (\ref{maximal-point}), following the argument in the proof of Theorem 4.1 in \cite{Ca1}, we
can prove that $(M_{\infty},g_{\infty}(t); p_{\infty})$ is in fact a (gradient) steady K\"{a}hler-Ricci soliton, which is $\kappa$-noncollapsed and has nonnegative sectional curvature. By Corollary \ref{rigidity-soliton}, $(M_{\infty},g_{\infty}(t);  p_{\infty})$ is a flat metric flow. This is impossible because $R_{\infty}(p_{\infty},t)\ge R(p,t^{\prime})>0$. Hence, we complete the proof.
\end{proof}

\section{Appendix}
In this appendix, we compute the curvature decay of the steady gradient K\"{a}her-Ricci solion on $\mathbb C^n$ constructed by Cao in \cite{Ca2}, and show that these steady soltions are collapsed.

We first recall Cao's construction. Let $(z_{1},z_{2},\cdots,z_{n})$ be the standard holomorphic coordinates on $\mathbb{C}^{n}$. Assume that $g=(g_{i\bar{j}})$ is an U(n)-invariant metric on $\mathbb{C}^{n}$ and the corresponding K\"{a}hler potential is given by $u(s)$, where $u(s)$ is a strictly increasing and convex function on $(-\infty,\infty)$ and $s=\ln|z|^{2}=\ln r^2$. By a direct computation, we have
\begin{align}
g_{i\bar{j}}=\partial_{i}\partial_{\bar{j}}u(s)=e^{-s}u'(s)\delta_{ij}+e^{-2s}\bar{z}_{i}z_{j}(u''(s)-u'(s)),\notag
\end{align}
\begin{align}
g^{i\bar{j}}=\partial_{i}\partial_{\bar{j}}u(s)=e^{s}u'(s)^{-1}\delta_{ij}+z_{i}\bar{z}_{j}(u''(s)-u'(s)),\notag
\end{align}
and
\begin{align}\label{soliton-cn-det}f(s)\triangleq -\ln\det(g_{i\bar{j}})=ns-(n-1)\ln u'(s)-\ln u''(s).
\end{align}
Then
\begin{align}\label{ricci-rotation}
R_{i\bar{j}}=\partial_{i}\partial_{\bar{j}}f(s)=e^{-s}f'(s)\delta_{ij}+e^{-2s}\bar{z}_{i}z_{j}(f''(s)-f'(s)).
\end{align}
Thus $g_{i\bar{j}}$ is a steady gradient soliton if and only if
\begin{equation*}
v^{i}\frac{\partial}{\partial z_i}= g^{i\bar{j}}\partial_{\bar{j}}f\frac{\partial}{\partial z_i}=(z_{i}\frac{f'}{u''})\frac{\partial}{\partial z_i}\notag
\end{equation*}
is a holomorphic vector field, which is equivalent to
\begin{equation}\label{soliton-cn-1}
f'=\lambda u'',
\end{equation}
for some constant $\lambda$.

Let $\phi=u'$.
Then by $(\ref{soliton-cn-1})$ and $(\ref{soliton-cn-det})$, we get an equation for $\phi$,
\begin{equation}\label{equation-phi}
\phi^{n-1}\phi^{\prime}e^{\alpha\phi}=\beta e^{ns}.
\end{equation}
After rescaling, we may choose $\alpha=\beta=1$.
Cao solved (\ref{equation-phi}) by
\begin{equation}\label{soliton-cn-3}
\sum_{k=0}^{n-1}(-1)^{n-k-1}\frac{n!}{k!}\phi^{k}e^{\phi}=e^{ns}+(-1)^{n-1}n!.
\end{equation}
Cao has observed the following properties of $\phi$,
\begin{align}\label{r-behavior}
&\phi(s)>0,~\phi^{\prime}(s)>0,~\forall~s\in(-\infty,+\infty), \notag\\
&\lim_{s\rightarrow\infty}\frac{\phi(s)}{s}=n,~\lim_{s\rightarrow\infty}\phi^{\prime}(s)=n.
\end{align}
He also proved that these solitons has positive sectional curvature.

The curvature asymptotic behavior can be also computed in the following. Let $o=(0,0,\cdots,0)$ and $p=(z_{1},0,\cdots,0)$. Then
by (\ref{ricci-rotation}), we have
\begin{align} R(p)&=-\frac{1}{\phi'}\Big((n-1)(\frac{\phi'}{\phi})'+(\frac{\phi''}{\phi'})'\Big)+\frac{n-1}{\phi}\Big(n-(n-1)\frac{\phi'}{\phi}-\frac{\phi''}{\phi'}\Big)\label{soliton-cn-4}\\
&=n-\phi^{\prime}. \notag\end{align}
On the other hand, by differentiating (\ref{soliton-cn-3}), it follows
\begin{equation}
\phi^{\prime}=\frac{e^{ns}}{e^{ns}+(-1)^{n-1}n!}\sum_{k=0}^{n-1}\Big((-1)^{n-k-1}\frac{n!}{k!}\phi^{k-n+1}\Big).\notag
\end{equation}
Thus
\begin{align}\label{scalar-cao}
R(p)=&\frac{(-1)^{n-1}n!\cdot n}{e^{ns}+(-1)^{n-1}n!}\notag\\
&+\frac{e^{ns}}{e^{ns}+(-1)^{n-1}n!}\Big(\frac{n(n-1)}{\phi}+\frac{1}{\phi^{2}}\sum_{k=0}^{n-3}(-1)^{n-k-1}\frac{n!}{k!}\phi^{k-n+3}\Big)\notag\\
&\to (n-1), ~{\rm as}~|z_1|\to\infty.
\end{align}

Let $\rho(x)$ be a distance function from the original point $o\in \mathbb C^n$. Then by (\ref{r-behavior}), it is easy to see
\begin{align}\label{distance-r}
\rho(x)=\frac{\sqrt{n}}{2}s(x)+o(s(x)),~as~s\rightarrow\infty.
\end{align}
Hence, using the U(n)-symmetry of $g$, we obtain from (\ref{scalar-cao}),

\begin{lem}\label{lamma-append} The metric $g$ satisfies the following curvature condition,
\begin{align}\label{pointwise-decay}
R(x)\rho(x)\rightarrow \frac{1}{2}\sqrt{n}(n-1),~as~|x|\rightarrow\infty.
\end{align}

\end{lem}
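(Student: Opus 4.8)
The plan is to reduce to the radial direction using the $U(n)$-symmetry of $g$ and then read off the leading asymptotics of $R$ and $\rho$ directly from the explicit formulas (\ref{scalar-cao}), (\ref{r-behavior}) and (\ref{distance-r}). Since $g$ is $U(n)$-invariant, both the scalar curvature $R(x)$ and the distance $\rho(x)=\rho(o,x)$ depend only on $s=\ln|z|^2$; hence it suffices to work along a point $p=(z_1,0,\cdots,0)$ and to track the behavior as $s\to\infty$ (equivalently $|x|\to\infty$). For such $p$ the scalar curvature is given by (\ref{scalar-cao}), so the entire problem reduces to a one-variable asymptotic computation in $s$.

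First I would isolate the dominant term of $R(p)$ in (\ref{scalar-cao}) as $s\to\infty$. Using $\phi(s)\to\infty$ together with $\phi(s)\sim ns$ from (\ref{r-behavior}), the first summand $\frac{(-1)^{n-1}n!\,n}{e^{ns}+(-1)^{n-1}n!}$ is exponentially small, the prefactor $\frac{e^{ns}}{e^{ns}+(-1)^{n-1}n!}$ tends to $1$, and inside the bracket the leading contribution is $\frac{n(n-1)}{\phi}$, while the finite sum $\frac{1}{\phi^{2}}\sum_{k=0}^{n-3}(-1)^{n-k-1}\frac{n!}{k!}\phi^{k-n+3}$ is $O(\phi^{-2})$, since its top exponent ($k=n-3$) contributes only order $\phi^{-2}$. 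Consequently
\begin{align}
R(p)=\frac{n(n-1)}{\phi(s)}\bigl(1+o(1)\bigr),\quad s\to\infty.\notag
\end{align}

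Next I would substitute $\phi(s)\sim ns$ to obtain $R(p)\sim\frac{n-1}{s}$, and then combine with the distance asymptotic $\rho(x)=\frac{\sqrt n}{2}s+o(s)$ from (\ref{distance-r}), i.e.\ $s\sim \frac{2}{\sqrt n}\rho(x)$. Multiplying the two leading behaviors gives
\begin{align}
R(x)\rho(x)=\frac{n-1}{s}\cdot\frac{\sqrt n}{2}s\,\bigl(1+o(1)\bigr)\longrightarrow \tfrac12\sqrt n\,(n-1),\notag
\end{align}
which is exactly (\ref{pointwise-decay}); the $U(n)$-invariance of $R$ and $\rho$ then promotes this radial limit to the full statement as $|x|\to\infty$.

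The computation is essentially elementary, so there is no deep obstacle; the only point requiring genuine care is the asymptotic bookkeeping in (\ref{scalar-cao}), namely checking that every term other than $n(n-1)/\phi$ is truly of lower order (exponentially small, or $O(\phi^{-2})=O(s^{-2})$), so that the single term $n(n-1)/\phi$ controls the limit. A secondary technical point is to confirm that the constant $\frac{\sqrt n}{2}$ in (\ref{distance-r}) is correctly normalized relative to the curvature convention underlying $R$ in (\ref{soliton-cn-4}); once both quantities are expressed on the same footing, the product of the two leading coefficients yields precisely $\frac12\sqrt n\,(n-1)$.
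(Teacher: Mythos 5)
Your proposal is correct and follows essentially the same route as the paper: the paper derives (\ref{scalar-cao}) and (\ref{distance-r}) and then simply invokes the $U(n)$-symmetry to conclude, exactly as you do, with your write-up merely supplying the asymptotic bookkeeping (the exponential smallness of the first term, the $O(\phi^{-2})$ bound on the finite sum, and $\phi\sim ns$, $\phi'\to n$) that the paper leaves implicit. Your reading of the paper's ``$\to(n-1)$'' in (\ref{scalar-cao}) as the statement $sR(p)\to n-1$, i.e.\ $R(p)\sim (n-1)/s$, is the correct interpretation needed for the product with $\rho(x)\sim\frac{\sqrt n}{2}s$ to give the stated limit.
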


By Lemma \ref{lamma-append}, we prove

\begin{prop}Any $U(n)$-symmetric steady gradient soliton on $\mathbb C^n$
is collapsed.
\end{prop}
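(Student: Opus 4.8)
The plan is to show that geodesic balls taken at the natural curvature scale have normalized volume tending to zero along a sequence going to infinity; this is precisely the failure of $\kappa$-noncollapsing for every $\kappa>0$, i.e. collapse. First I would record the Riemannian volume form of $g$. Since $g$ is the K\"ahler metric with potential $u(s)$, one has $dV_g=\det(g_{i\bar j})\,dV_{\rm eucl}$ up to a fixed dimensional constant, and by (\ref{soliton-cn-det}) together with $\phi=u'$ this gives $\det(g_{i\bar j})=e^{-ns}\phi^{n-1}\phi'$. Passing to polar coordinates on $\mathbb{C}^n$ and using $r=e^{s/2}$, the factor $r^{2n-1}\,dr$ becomes $\tfrac12 e^{ns}\,ds$, so the $e^{ns}$ cancels and
\begin{equation*}
dV_g=c_n\,\phi^{n-1}\phi'\,ds\,d\sigma,
\end{equation*}
where $d\sigma$ is the measure on the unit sphere $S^{2n-1}$ and $c_n>0$ is a dimensional constant. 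Hence the $g$-volume of the annular region between two levels $s_1<s_2$ is a constant times $\phi(s_2)^n-\phi(s_1)^n$.

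Next I would fix the scale. By Lemma \ref{lamma-append} we have $R(x)\rho(x)\to\tfrac12\sqrt n(n-1)$, so $R(x)\le C\rho(x)^{-1}$ for $\rho(x)$ large; since the sectional curvature is positive, the full curvature tensor is controlled by the scalar curvature, giving $|{\rm Rm}(x)|\le C'\rho(x)^{-1}$. Now choose any $p_k\to\infty$, set $\rho_k=\rho(p_k)$ and $r_k=\varepsilon\sqrt{\rho_k}$ with $\varepsilon>0$ small and fixed. Because $\rho$ is $1$-Lipschitz, every $x\in B(p_k,r_k)$ satisfies $\rho(x)\ge\rho_k-r_k\ge\tfrac12\rho_k$ for $k$ large, whence $|{\rm Rm}(x)|\le 2C'\rho_k^{-1}\le r_k^{-2}$ once $\varepsilon^2\le(2C')^{-1}$. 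Thus each $B(p_k,r_k)$ is an admissible ball in the sense of the $\kappa$-noncollapsing definition.

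Finally I would bound the volume from above. Since $\rho$ is $1$-Lipschitz, $B(p_k,r_k)$ is contained in the annulus $\{\,|\rho(x)-\rho_k|\le r_k\,\}$, which in the $s$-variable is an interval of length $\le C r_k$ because $d\rho/ds\to\tfrac{\sqrt n}{2}>0$ by (\ref{distance-r}). Using the volume form above together with the asymptotics $\phi(s)\sim ns$ and $\phi'(s)\to n$ from (\ref{r-behavior}), and $s\sim 2\rho_k/\sqrt n$, the volume of this annulus is at most $C\,\phi(s_k)^{n-1}\phi'(s_k)\,r_k\le C\,\rho_k^{\,n-1}\sqrt{\rho_k}=C\,\rho_k^{\,n-1/2}$. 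Comparing with $r_k^{2n}=\varepsilon^{2n}\rho_k^{\,n}$ yields
\begin{equation*}
\frac{{\rm vol}(B(p_k,r_k))}{r_k^{2n}}\le C\,\varepsilon^{1-2n}\,\rho_k^{-1/2}\longrightarrow 0,
\end{equation*}
so no $\kappa>0$ can satisfy the noncollapsing inequality for all admissible balls, and $(\mathbb{C}^n,g)$ is collapsed. The step needing genuine care is this volume bound: the decisive feature is that $\phi^{n-1}\phi'\sim C\,s^{n-1}$ grows only like $\rho_k^{\,n-1}$ rather than $\rho_k^{\,n}$ — geometrically, the Hopf circle generated by $J\nabla f$ stays of bounded length while the base $\mathbb{CP}^{n-1}$ grows only like $\sqrt{\rho_k}$ — so the annular volume falls a full factor $\sqrt{\rho_k}$ short of $r_k^{2n}$. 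Checking the curvature bound uniformly over the whole ball (not merely at $p_k$) and the constants controlling the $s$-interval length are the only other points requiring attention.
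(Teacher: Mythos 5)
Your proof is correct and follows essentially the same route as the paper's: both pick balls of radius $r_k\sim\sqrt{\rho_k}$ (the curvature scale, since $R\sim\rho^{-1}$ by Lemma \ref{lamma-append}), verify admissibility, and bound the ball's volume by that of the corresponding $s$-annulus, whose volume $\sim\phi^{n-1}\phi'\cdot r_k\sim\rho_k^{n-1/2}$ falls one factor of $r_k$ short of $r_k^{2n}$. The only cosmetic difference is that you integrate $\det(g_{i\bar j})$ directly to get the volume form, whereas the paper writes $g$ in Hopf-fibration coordinates and compares with the cylinder $\mathbb{R}\times\mathbb{CP}^{n-1}$; the resulting estimate is the same.
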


\begin{proof}
Let $z_{i}=x_{i}+\sqrt{-1}y_{i}$ for $1\le i\le n$. We introduce new coordinates $(r,\theta,x_{2}^{\prime},y_{2}^{\prime},\cdots,x_{n}^{\prime},y_{n}^{\prime})$ such that
\begin{equation*}
\left\{ \begin{aligned}
x_{1} &= \cos\theta\sqrt{r^{2}-\Sigma_{i=2}^{n}(x_{i}^{2}+y_{i}^{2})}, \\
y_{1} &= \sin\theta\sqrt{r^{2}-\Sigma_{i=2}^{n}(x_{i}^{2}+y_{i}^{2})}, \\
x_{2} &= rx_{2}^{\prime},\\
y_{2} &= ry_{2}^{\prime},\\
\cdots\\
x_{n} &= rx_{n}^{\prime},\\
y_{n} &= ry_{n}^{\prime}.
\end{aligned} \right.
\end{equation*}
Then under the new coordinates the metric $g$ has an expression,
\begin{align}
g&=r^{-2}\phi^{\prime}(s)({\rm d}r^{2}+r^{2}{\rm d\theta^{2}})+\phi(s)\pi^{*}g_{FS}\notag\\
&=\frac{\phi^{\prime}(s)}{4}{\rm d}s^{2}+\phi^{\prime}(s){\rm d\theta^{2}}+\phi(s)\pi^{*}g_{FS}\notag\\
&=\phi^{\prime}(\tau^{2})\tau^{2}{\rm d}\tau^{2}+\phi^{\prime}(\tau^{2}){\rm d\theta^{2}}+\phi(\tau^{2})\pi^{*}g_{FS},\label{new-version-g-1}
\end{align}
where $\pi: S^{2n-1}\to \mathbb{CP}^{n-1}$ is the $S^{1}$-Hopf fibration.
Let $p_{k}\in M$ such that $|p_{k}|^{2}=e^{k^{2}}$ and let $r_{k}= \frac{k}{2\sqrt{n-1}}$. By the choice of $p_{k}$, we have
$s(p_{k})=k^{2}.$

Let $N_{k}=\{x\in M: k^{2}-k\le s(x)\le k^{2}+k\}$ and $g_{k}=\phi(p_{k})^{-1}g$. We consider open manifolds $(N_{k}, g_{k})$. By the asymptotic behavior of $\phi(s)$ and (\ref{new-version-g-1}),
it is easy to see that $(N_{k}, g_{k})$ converge to $(\mathbb{R}\times \mathbb{CP}^{n-1}, ds^2\otimes g_{FS} )$ in $C^\infty$ topology. Note that $B(p_{k},r_{k})\subset N_{k}$.  By the convergence, for any $x\in B(p_{k},r_{k})$, $s(x)\in [k^{2}-2r_{k},k^{2}+2r_{k}]$ and $(x_{2}^{\prime}(x),y_{2}^{\prime}(x),\cdots,x_{n}^{\prime}(x),y_{n}^{\prime}(x))\subset B_{FS}(p_{k},2\phi(p_{k})^{-1/2}r_{k})$, where $B_{FS}(p_{k},r)$ is the geodesic ball of the submanifold $\{(r(p_{k}),\theta(p_{k}),x_{2}^{\prime},y_{2}^{\prime},\cdots,x_{n}^{\prime},y_{n}^{\prime})\in M\}$ with the metric $\pi^*g_{FS}$. Hence, the volume of $B(p_{k},r_{k})$ satisfies the following estimate for  sufficiently large  $k$,
\begin{align}\label{local-volume}
& {\rm vol}(B(p_{k},r_{k}))\notag\\
&\le\int_{k^{2}-2r_{k}}^{k^{2}+2r_{k}} ds\int_{0}^{2\pi} d\theta \int_{B_{FS}( p_k,  2\phi(p_{k})^{-1/2}r_{k})}\phi'(s)\phi(s)^{n-1} d{\rm vol}_{g_{FS}}\notag\\
&=2\pi  (\phi(p_{k}))^{n-1}    \int_{k^{2}-2r_{k}}^{k^{2}+2r_{k}} ds      \int_{B_{FS}( p_k,  2\phi(p_{k})^{-1/2}r_{k})}   \phi'(s)\Big(\frac{\phi(s)}{\phi(p_{k})}\Big)^{n-1} d{\rm vol}_{g_{FS}} \notag\\
&\leq 2\pi   (\phi(p_{k}))^{n-1}    \int_{k^{2}-2r_{k}}^{k^{2}+2r_{k}} ds   \int_{ \mathbb{CP}^{n-1}}   2^{n-1}n d{\rm vol}_{g_{FS}}\notag\\
&\leq (32)^{n+1}n (n-1)^{n-1}\pi  \omega_{2n-2} r^{2n-1}_{k}.
\end{align}
It follows
$$\lim_{k\rightarrow\infty}\frac{{\rm vol}(B(p_{k},r_{k}))}{r_{k}^{2n}}=0.$$
On the  other hand, by Lemma \ref{lamma-append},
$$R(x)\le  \frac{2(n-1)}{k^2-k}\le \frac{4(n-1)}{k^2}=\frac{1}{r_{k}^{2}},~ \forall~ x\in B(p_{k},r_{k}),$$
 when $k$ is large enough.
Hence $g$ is collapsed.
\end{proof}

From the computation in (\ref{local-volume}), it is easy to get the volume growth of $B(p, r)$,
$${\rm vol}(B(p, r))=O(r^n),~{\rm as}~r\to\infty.$$

\section*{References}

\small

\begin{enumerate}

\renewcommand{\labelenumi}{[\arabic{enumi}]}

\bibitem{Br1} Brendle, S., \textit{Rotational symmetry of self-similar solutions to the Ricci flow}, Invent. Math. , \textbf{194} No.3 (2013), 731-764.

\bibitem{Br2} Brendle, S., \textit{Rotational symmetry of Ricci solitons in higher dimensions}, J. Diff. Geom., \textbf{97} (2014), no. 2, 191-214.

\bibitem{Bry} Bryant, R. L.,\textit{Gradient K\"{a}hler Ricci solitons}, G\'{e}omtri\'{e} diff\'{e}rentielle,
physi-que math\'{e}matique, math\'{e}matiques et soci\'{e}t\'{e}. I. Ast\'{e}risque, No. 321 (2008),
51-97.

\bibitem{Ca1} Cao, H.D., \textit{Limits of solutions to the K\"{a}hler-Ricci flow},
J. Diff. Geom., \textbf{45}
(1997), 257-272.

\bibitem{Ca2} Cao, H.D., \textit{Existence of gradient K\"{a}hler-Ricci solitons}, Elliptic and parabolic methods in geometry (Minneapolis, MN, 1994), 1-16, A K Peters, Wellesley, MA, 1996.

\bibitem{Ca3} Cao, H.D., \textit{On dimension reduction in the K\"{a}hler-Ricci flow}, Comm.
Anal. Geom.,
\textbf{12}, No. 1, (2004), 305-320.

\bibitem{CaCh} Cao, H.D. and Chen, Q., \textit{On locally conformally flat gradient steady Ricci solitons},
Trans. Amer. Math. Soc., \textbf{364} (2012), 2377-2391 .

\bibitem{CG} Cheeger, J. and Gromoll, D., \textit{The splitting theorem for manifolds of nonnegative Ricci curvature}, J. Diff. Geom., \textbf{6} (1972), 119-128.

\bibitem{CGT} Cheeger, J., Gromov, M. and Taylor, M., \textit{Finite propagation speed, kernel estimates for functions of the Laplace operator, and the geometry of complete Riemannian manifolds}, J. Diff. Geom., \textbf{17} (1982), 15-53.


\bibitem{Ch} Chodosh, O. and Fong T.H., \textit{Rotational symmetry of conical K\"ahler-Ricci solitons}, arXiv:math/1304.0277v2.



\bibitem{DZ}Deng, Y.X. and Zhu, X.H., \textit{Complete non-compact gradient Ricci solitons with nonnegative Ricci curvature},
Math. Z., \textbf{279} (2015), no. 1-2, 211-226.

\bibitem{DZ2}Deng, Y.X. and Zhu, X.H., \textit{ Asymptotic behavior of positively curved steady Ricci Solitons, II},
arXiv:math/1604.00142.

\bibitem{H2} Hamilton, R.S., \textit{Three manifolds with positive Ricci curvature}, J. Diff. Geom., \textbf{17} (1982), 255-306.

\bibitem{H} Hamilton, R.S., \textit{The Harnack estimate for the Ricci flow}, J. Diff. Geom., \textbf{37} (1993), 225-243.

\bibitem{H4} Hamilton, R.S., \textit{A compactness property for solution of the Ricci flow}, Amer. J. Math., \textbf{117} (1995), 545-572.

\bibitem{H1} Hamilton, R.S., \textit{Formation of singularities in the Ricci flow}, Surveys in Diff. Geom., \textbf{2} (1995),
7-136.

\bibitem{MT} Morgan, J. and Tian, G., \textit{ Ricci flow and the Poincar\'{e} conjecture}, Clay Math. Mono., 3. Amer. Math. Soc., Providence, RI; Clay Mathematics Institute, Cambridge, MA, 2007, xlii+521 pp. ISBN: 978-0-8218-4328-4.

\bibitem{N} Ni, L., \textit{Ancient solutions to K\"{a}hler-Ricci flow}, Third International Congress of Chinese Mathematicians, Part 1, 2, 279-C289, AMS/IP Stud. Adv. Math., 42, pt. 1, 2, Amer. Math. Soc., Providence, RI, 2008.

\bibitem{Na} Naber, A., \textit{Noncompact shrinking four solitons with nonnegative curvature}, J. Reine Angew Math., \textbf{645} (2010), 125-153.

\bibitem{Pe1} Perelman, G., \textit{The entropy formula for the Ricci flow and its geometric applications}, arXiv:math/0211159.


\bibitem{S1} Shi, W.X., \textit{Ricci deformation of the metric on complete noncompact Riemannian
manifolds}, J. Diff. Geom., \textbf{30} (1989), 223-301.

\end{enumerate}

\end{document}